\newtheorem{theorem}{Theorem}
\newtheorem{proposition}[theorem]{Proposition}
\newtheorem{lemma}[theorem]{Lemma}
\newtheorem{conjecture}[theorem]{Conjecture}
\newtheorem{definition}[theorem]{Definition}
\newtheorem{corollary}[theorem]{Corollary}
\definecolor{darkgreen}{rgb}{0,0.6,0}
\providecommand{\F}{\mathbb{F}}
\providecommand{\N}{\mathbb{N}}
\DeclareMathOperator{\AG}{AG}
\DeclareMathOperator{\PG}{PG}
\newcommand{\<}{\langle}
\renewcommand{\>}{\rangle} 
\newcommand{\cA}{\mathcal{A}}
\newcommand{\cC}{\mathcal{C}}
\newcommand{\cP}{\mathcal{P}}
\newcommand{\cQ}{\mathcal{Q}}
\newcommand{\cS}{\mathcal{S}}
\newcommand{\cT}{\mathcal{T}}
\newcommand{\cU}{\mathcal{U}}
\newcommand{\gaussm}[3]{\genfrac{[}{]}{0pt}{}{#1}{#2}_{#3}}
\newcommand{\avsp}{avsp}
\newcommand{\coloneq}{\vcentcolon=}      
\newcommand{\eqcolon}{=\vcentcolon}      
\providecommand{\keywords}[1]
{
  \small	
  \textbf{\textit{Keywords---}} #1
}
\newcommand{\rev}[1]{#1}
\title{Affine vector space partitions}
\author[1]{John Bamberg}
\author[2]{Yuval Filmus}
\author[3]{Ferdinand Ihringer}
\author[4]{Sascha Kurz}
\affil[1]{The University of Western Australia, Australia, john.bamberg@uwa.edu.au}
\affil[2]{The Henry and Marilyn Taub Faculty of Computer Science and Faculty of Mathematics, Technion --- Israel Institute of Technology, Israel, yuvalfi@cs.technion.ac.il}
\affil[3]{Ghent University, Belgium, Ferdinand.Ihringer@gmail.com}
\affil[4]{University of Bayreuth, Germany, sascha.kurz@uni-bayreuth.de}
\date{May 2023}
\begin{document}

\maketitle

\begin{abstract}
  An affine vector space partition of $\AG(n,q)$ is a set of proper affine subspaces that partitions the set of points. Here we determine minimum sizes and enumerate equivalence classes of affine vector space partitions for small parameters. We also give parametric constructions for arbitrary field sizes.
\end{abstract}
\keywords{finite geometry, vector space partitions, spreads, Klein quadric, Fano plane, hitting formulas}

\section{Introduction}
\label{sec_introduction}

A \emph{vector space partition} $\cP$ of the projective space $\PG(n-1,q)$ is a set of subspaces in $\PG(n-1,q)$ which partitions the set of points. For a survey on known results we 
refer to \cite{heden2012survey}. We say that a vector space partition $\cP$ has type $(n-1)^{m_{n-1}} \dots 2^{m_2}1^{m_1}$ if precisely $m_i$ of its elements have dimension $i$, where $1\le i\le n$. The 
classification of the possible types of a vector space partition, given the parameters $n$ and $q$, is an important and difficult problem. Based on \cite{heden1986partitions}, the classification
for the binary case $q=2$ was completed for $n\le 7$ in \cite{el2009partitions}. Under the assumption $m_1=0$ the case $(q,n)=(2,8)$ has been treated in \cite{el2010partitions}. It seems quite 
natural to define a \emph{vector space partition} $\cA$ of the affine space $\AG(n,q)$ as a set of subspaces in $\AG(n,q)$ that partitions the set of points. However, it turns out that 
those partitions exist for all types which satisfy a very natural numerical condition. If we impose the additional condition of \emph{tightness}, that is that the projective closures of the elements of $\cA$ have an empty intersection, then
the classification problem becomes interesting and challenging. This condition is natural in the context of hitting formulas as introduced in \cite{Iwama}, that is for 
logical formulas in full disjunctive normal form (DNF) such that each truth assignment to the underlying variables satisfies precisely one term. For a more recent treatment and applications we refer to \cite{PeitlSzeider}. Here we consider the geometrical and the combinatorial point of view. 

Variants of vector space partitions of $\PG(n-1,q)$ have been studied in the literature. In \cite{el2011lambda} the authors study (multi-)sets of subspaces
covering each point exactly $\lambda$ times. The problem of covering each $k$-space exactly once is considered in \cite{heinlein2019generalized}. A more general partition problem 
for groups is studied in \cite{heden1986partitions}. 
Irreducible homogeneous affine vector space partitions have been 
studied by Agievich \cite{Agievich08} and Tarannikov \cite{Tarannikov22} motivated by the study of bent functions.
However, we are not aware of any publication treating the introduced affine vector space partitions
in the same generality as in the present work.

The paper is organized as follows. In Section~\ref{sec_preliminaries} we formally introduce affine vector space partitions, state the preliminaries, and develop
the first necessary existence conditions. Here we are guided by the published necessary conditions for vector space partitions. 
We also argue why tightness (see above) and \emph{irreducibility}, that is there exists no proper subset $\cA' \subsetneq \cA$ such that 
the union of all elements of $\cA'$ is a subspace of $\AG(n,q)$, are necessary to obtain an interesting existence question.
In Section~\ref{sec_classification_avsp} we classify 
affine vector space partitions for arbitrary field sizes but small dimensions. Section~\ref{sec_classification_ti_avsp} is concerned with the binary case. We completely determine 
the possible dimension distributions of tight irreducible affine vector space partitions of $\PG(n-1,2)$ for all $n\le 7$. In a few cases we give theoretical or computational 
classifications of the corresponding equivalence classes of tight irreducible vector space partitions. A very nice example consists of eight solids in $\PG(6,2)$ whose parts at infinity 
live on the Klein quadric $Q^+(5,2)$. A generalization to arbitrary finite fields of characteristic $2$ is given in Subsection~\ref{subsec_klein_quadric}. Parametric constructions of tight irreducible affine vector space partitions using spreads or hitting formulas complete Section~\ref{sec_constructions}.  
In Section~\ref{sec_minimum_size} 
we determine the smallest possible size of an irreducible tight affine vector space partition of $\PG(7,2)$ and give a parametric upper bound for $\PG(n-1,2)$ of size 
roughly $\tfrac{3n}{2}$, which is significantly smaller than the conjectured smallest size of an irreducible hitting formula mentioning all variables. We close with a conclusion and a 
list of open problems in Section~\ref{sec_conclusion}. To keep the paper self-contained we present some additional material in an appendix. Section~\ref{sec_ilp} contains details on 
integer linear programming formulations that we have utilized to obtain some computational results. Section~\ref{sec_details} contains a few technical results that might be left to the 
reader or collected from the literature. Lists of hitting formulas that can be used to construct tight irreducible affine vector space partitions of the minimum possible size are
given in Section~\ref{sec_compression}.

\section{Preliminaries and necessary conditions}
\label{sec_preliminaries}
\begin{definition} 
  An \emph{affine vector space partition} $\cA$ of $\AG(n,q)$ is a set $\left\{A_1,\dots,A_r\right\}$ of subspaces of $\AG(n,q)$ such that $0\le \dim\!\left(A_i\right)\le n-1$ for all 
  $1\le i\le r$ and every point (element of \rev{$\F_q^{n}$}) 
  is contained in exactly one element $A_i$. The integer $r$ is called the \emph{size} of the 
  affine vector space partition.
\end{definition}
We write $\#\cA$ for the size of $\cA$. 
For each affine subspace $A\in\AG(n,q)$ we write $\overline{A}$ for its projective closure. With this $\overline{\cA}\coloneq\left\{\overline{A}\,:\,A\in\cA\right\}$ is the natural embedding
of an affine vector space partition of $\AG(n,q)$ in $\PG(n,q)$. Denoting the hyperplane at infinity by $H_\infty$, we can directly define an affine vector space partition in 
$\PG(n,q)$:
\begin{definition}
  An \emph{affine vector space partition} $\cU$ of $\PG(n-1,q)$ is a set $\left\{U_1,\dots,U_r\right\}$ of subspaces of $\PG(n-1,q)$ such that $1\le \dim\!\left(U_i\right)\le n-1$ for all 
  $1\le i\le r$ and there exists a hyperplane $H_\infty$ such that every point ($1$-dimensional subspace) outside of $H_\infty$ is contained in exactly one element $U_i$ 
  and $U_i\not\le H_\infty$ for all $1\le i\le r$. The integer $r$ is called the \emph{size} of the affine vector space partition and also denoted by $\#\cU$.
\end{definition}
Here we use the algebraic dimension for subspaces in $\PG(n-1,q)$, i.e., if $\dim(U)=u$, then $\# U=\gaussm{u}{1}{q}\coloneq\tfrac{q^u-1}{q-1}$ and we also speak of $u$-spaces. Using 
the geometric language, we call $1$-, $2$-, $3$-, $4$-, and $n{-}1$-spaces points, lines, planes, solids, and hyperplanes, respectively. For each $1\le i\le r$ the set 
$U_i\backslash H_\infty$ is an affine space containing \rev{$q^{\dim(U_i)-1}$ points}.

In the following we will mostly speak of an affine vector space partition, abbreviated as \avsp, and will consider its embedding in $\PG(n-1,q)$. The \emph{type} of an {\avsp} 
$\cU=\left\{U_1,\dots,U_r\right\}$ is given by $(n-1)^{m_{n-1}}\dots 2^{m_2}1^{m_1}$, where $m_i=\#\left\{U_j\,:\, 1\le j\le r, \dim(U_j)=i\right\}$. Counting points 
outside of $H_\infty$ gives
\begin{equation}
  \sum_{i=1}^{n-1} m_i \cdot q^{i-1}=q^{n-1}. \label{eq_packing_condition}
\end{equation}
The analog of Equation~(\ref{eq_packing_condition}) for vector space partitions of $\PG(n-1,q)$ is called the \emph{packing condition}. While the packing condition for vector 
space partitions of $\PG(n-1,q)$ is just a necessary but not a sufficient condition for the existence with a given type, for 
{\avsp}s Equation~(\ref{eq_packing_condition}) is both necessary and sufficient.                                  
\begin{lemma}
  \label{lemma_all_types_are_feasible}
  For each type $(n-1)^{m_{n-1}}\dots 2^{m_2}1^{m_1}$ that satisfies the packing condition (\ref{eq_packing_condition}) there exists an {\avsp} $\cU$ of $\PG(n-1,q)$ attaining that type.
\end{lemma}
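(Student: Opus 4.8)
The plan is to pass to the affine picture and then build the partition by slicing $\F_q^{n-1}$ into parallel affine hyperplanes and recursing on the dimension. First, record the dictionary between the two definitions: if $U$ is a subspace of $\PG(n-1,q)$ with $U\not\le H_\infty$, then $U\cap H_\infty$ is a hyperplane of $U$, so $U\setminus H_\infty$ is an affine subspace of $\AG(n-1,q)\cong\F_q^{n-1}$ with $q^{\dim(U)-1}$ points, and $U\mapsto U\setminus H_\infty$ is a bijection between the subspaces of $\PG(n-1,q)$ not contained in $H_\infty$ and the affine subspaces of $\F_q^{n-1}$, sending algebraic dimension $i$ to affine dimension $i-1$. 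Hence an {\avsp} of $\PG(n-1,q)$ of type $(n-1)^{m_{n-1}}\cdots 1^{m_1}$ is the same datum as a partition of $\F_q^{n-1}$ into affine subspaces with exactly $m_i$ parts of affine dimension $i-1$. Writing $d\coloneq n-1$ and $a_j\coloneq m_{j+1}$, the packing condition~(\ref{eq_packing_condition}) becomes $\sum_{j=0}^{d-1}a_jq^j=q^d=|\F_q^d|$, so it suffices to show: for every $q\ge 2$, every $d\ge 0$ and all integers $a_0,\dots,a_{d-1}\ge 0$ with $\sum_{j=0}^{d-1}a_jq^j=q^d$, the space $\F_q^d$ admits a partition into affine subspaces having exactly $a_j$ parts of dimension $j$ for each $j$.

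I would prove this claim by induction on $d$, the case $d=0$ forcing $a_0=1$ and being trivial. For $d\ge 1$, fix a linear hyperplane $W\le\F_q^d$ and write $\F_q^d$ as the disjoint union of the $q$ cosets $C_1,\dots,C_q$ of $W$. Since $a_{d-1}q^{d-1}\le q^d$ we have $a_{d-1}\le q$, so I may declare $C_1,\dots,C_{a_{d-1}}$ to be the $a_{d-1}$ parts of dimension $d-1$. Put $k\coloneq q-a_{d-1}$. If $k=0$, then $\sum_{j=0}^{d-2}a_jq^j=0$ forces all remaining multiplicities to vanish and we are done. Otherwise $\sum_{j=0}^{d-2}a_jq^j=q^d-a_{d-1}q^{d-1}=kq^{d-1}$, and the remaining cosets $C_{a_{d-1}+1},\dots,C_q$ are $k$ affine copies of $\F_q^{d-1}$; I want to parcel out the still-unused multiplicities $(a_0,\dots,a_{d-2})$ among them so that each copy receives a list again obeying the packing condition $\sum_j(\cdot)q^j=q^{d-1}$. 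Granting this, the induction hypothesis partitions each of the $k$ copies accordingly, and the union of these partitions together with $C_1,\dots,C_{a_{d-1}}$ is the required partition of $\F_q^d$.

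The heart of the matter is therefore the purely arithmetic \emph{splitting lemma}: if $\sum_{j=0}^{m-1}a_jq^j=kq^m$ with integers $a_j,k\ge 0$, then one can write $a_j=\sum_{\ell=1}^{k}a_j^{(\ell)}$ with $a_j^{(\ell)}\ge 0$ and $\sum_{j=0}^{m-1}a_j^{(\ell)}q^j=q^m$ for every $\ell$. I would prove this by a second induction, on $m$ (the base $m=0$ forces $k=0$ and is vacuous). A ``block'' of value $q^m$ can use at most $q$ copies of $q^{m-1}$, and $a_{m-1}\le kq$, so I can first choose integers $0\le b^{(\ell)}\le q$ with $\sum_{\ell}b^{(\ell)}=a_{m-1}$; block $\ell$ then still needs value $(q-b^{(\ell)})q^{m-1}$ from the smaller coins $q^0,\dots,q^{m-2}$, i.e.\ it needs $r^{(\ell)}\coloneq q-b^{(\ell)}$ ``sub-blocks'' of value $q^{m-1}$, for a total of $K\coloneq\sum_{\ell}r^{(\ell)}=kq-a_{m-1}$ sub-blocks. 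Because $\sum_{j=0}^{m-2}a_jq^j=(kq-a_{m-1})q^{m-1}=Kq^{m-1}$, the induction hypothesis (applied with $m-1$ and $K$ in place of $m$ and $k$) splits $(a_0,\dots,a_{m-2})$ into $K$ sub-blocks each of value $q^{m-1}$; grouping $r^{(\ell)}$ of these sub-blocks with the $b^{(\ell)}$ copies of $q^{m-1}$ gives block $\ell$ total value $q^m$, as needed. Applying the splitting lemma with $m=d-1$, $k=q-a_{d-1}$ to $(a_0,\dots,a_{d-2})$ completes the induction on $d$, hence the claim and the lemma.

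The main obstacle is not geometric: the slicing into parallel hyperplanes is routine, and all the real (though modest) work is the combinatorial splitting lemma --- the verification that the prescribed multiplicities can always be distributed among the $q$ parallel affine hyperplanes in such a way that every part lies entirely inside one of them. The nested induction runs smoothly precisely because the ``denominations'' are powers of $q$ and the total value to be distributed at each stage is an exact multiple of the next power of $q$; I would still check the degenerate cases ($a_{d-1}=q$, or $K=0$, or $d\le 1$) explicitly, although each is immediate.
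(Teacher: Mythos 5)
Your proof is correct and follows essentially the same route as the paper: the paper also slices the space by the $q$ hyperplanes through a fixed $(n-2)$-space $K\le H_\infty$ (your parallel cosets of $W$, in affine language), uses the top-dimensional parts as full slices, distributes the remaining multiplicities so that each slice again satisfies the packing condition, and recurses. Your explicit splitting lemma is just a proof of the distribution step that the paper delegates to its greedy Algorithm~\ref{algo_decomposition_packing_formula} in the appendix.
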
                                
\begin{proof}
  Consider a subspace $K$ of $H_\infty$ with $\dim(K)=n-2$. 
  By $H_1,\dots, H_q$ we denote the $q$ hyperplanes 
  containing $K$ that are not equal to $H_\infty$. Clearly, we have $0\le m_{n-1}\le q$ and we can choose $H_1,\dots,H_{m_{n-1}}$ as the first elements of $\cU$. The remaining 
  elements are constructed recursively. For each index $m_{n-1}+1\le j\le q$ we consider an {\avsp} of type 
  $(n-2)^{m_{n-2}^{(j)}}\dots 2^{m_2^{(j)}}1^{m_1^{(j)}}$ where the $m_i^{(j)}\in\N_0$ are chosen such that the packing condition is satisfied for $H_j$ and 
  \begin{equation}
    \sum_{j=m_{n-1}+1}^q m_i^{(j)}=m_i 
  \end{equation}
  for all $1\le i\le n-2$. Such a decomposition can be easily constructed, see e.g.\ Algorithm~\ref{algo_decomposition_packing_formula} in Section~\ref{sec_details}.
\end{proof}

\begin{definition}
  We call an {\avsp} $\cU=\left\{U_1,\dots,U_r\right\}$ \emph{reducible} if there exists a subspace $U$ and a subset $S\subsetneq \{1,\dots,r\}$ 
  such that $\dim(U)<n$, $\#S>1$ and $\left\{U_i\,:\, i\in S\right\}$ is an {\avsp} of \rev{$U$}. Otherwise $\cU$ is called \emph{irreducible}. 
\end{definition}


\begin{lemma}
  The smallest size of an irreducible {\avsp} $\cU$ of $\PG(n-1,q)$ is given by $\#\cU=q$.
\end{lemma}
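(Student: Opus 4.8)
The statement has two halves: a lower bound $\#\cU\ge q$ for every irreducible {\avsp} of $\PG(n-1,q)$ (in fact for every {\avsp}, so irreducibility is not needed for this direction), and a construction meeting it. Throughout one may assume $n\ge 2$, since for $n=1$ the dimension constraint $1\le\dim(U_i)\le n-1$ is vacuous and no {\avsp} exists.

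For the lower bound the plan is to exploit the rigidity of affine lines. Fix a line $L$ of $\PG(n-1,q)$ with $L\not\le H_\infty$; then $L$ meets $H_\infty$ in a single point and has exactly $q$ points outside $H_\infty$. The key observation is a dichotomy: if two of these $q$ affine points lie in a common part $U_i$, then the subspace $U_i$ contains two points of $L$, hence $L\subseteq U_i$, and so in fact \emph{all} $q$ affine points of $L$ lie in $U_i$. Consequently, for every such line $L$, its affine points either all lie in one part of $\cU$ or lie in $q$ pairwise distinct parts. To conclude, I would first observe $\#\cU\ge 2$: a single part carries only $q^{\dim(U_i)-1}\le q^{n-2}<q^{n-1}$ affine points, too few to cover $\AG(n,q)$. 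Hence there exist affine points $P,P'$ belonging to two distinct parts; applying the dichotomy to the line $L=\langle P,P'\rangle$ (which is not contained in $H_\infty$, since $P\notin H_\infty$), the ``all in one part'' alternative fails, so $L$ meets $q$ distinct parts and $\#\cU\ge q$.

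For the matching upper bound I would use the pencil of hyperplanes through a codimension-one subspace of $H_\infty$. Choose an $(n-2)$-space $K\le H_\infty$ and let $H_1,\dots,H_q$ be the hyperplanes through $K$ different from $H_\infty$. Every affine point $P$ lies on exactly one hyperplane through $K$ other than $H_\infty$, namely $\langle K,P\rangle$, so $\{H_1,\dots,H_q\}$ is an {\avsp} of type $(n-1)^q$ (this is precisely the base step in the proof of Lemma~\ref{lemma_all_types_are_feasible}). It is irreducible: if a sub-collection of size greater than $1$ were an {\avsp} of some subspace $U$ with $\dim(U)<n$, then $U$ would contain two distinct hyperplanes, forcing $\dim(U)=n$, a contradiction. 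Hence the minimum possible size is exactly $q$.

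I do not anticipate a genuine obstacle; the only points needing care are making the line dichotomy watertight in the small cases --- $n=2$, where the unique affine line is the whole space and all parts are points, and $q=2$, where $\#\cU\ge 2$ already yields $\#\cU\ge q$ --- and confirming that the degenerate possibility $\#\cU\le 1$ is ruled out by the point count.
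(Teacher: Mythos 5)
Your proposal is correct, and its second half coincides with the paper's: the matching construction is exactly the pencil $H_1,\dots,H_q$ of hyperplanes through an $(n-2)$-space $K\le H_\infty$, and your irreducibility check (a proper sub-collection of size $>1$ would force some $U$ with $\dim(U)<n$ to contain two distinct hyperplanes) is a correct elaboration of what the paper simply asserts. The lower bound, however, is argued differently. The paper uses only the point count: there are $q^{n-1}$ affine points and each element covers at most $q^{n-2}$ of them, so $\#\cU\ge q$ in one line, with irreducibility playing no role. You instead use the counting only to get $\#\cU\ge 2$ and then boost to $q$ via the line dichotomy: a line $L\not\le H_\infty$ has $q$ affine points, and since a projective subspace containing two points of $L$ contains all of $L$, these $q$ points lie either all in one part or in $q$ pairwise distinct parts; choosing $P,P'$ in distinct parts rules out the first alternative. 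This is valid (each part indeed has $q^{\dim(U_i)-1}\ge 1$ affine points, so such $P,P'$ exist), and it yields the mildly stronger structural fact that any affine line not swallowed by a single part meets $q$ distinct parts; but note that the very counting you invoke for $\#\cU\ge 2$ already gives $\#\cU\ge q$ directly, since $\#\cU\cdot q^{n-2}\ge q^{n-1}$, so the detour through lines, while correct, is unnecessary. Your handling of the degenerate cases ($n=1$, $n=2$, $q=2$) is consistent with the paper.
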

\begin{proof}
  Let $\cU$ be an {\avsp} of $\PG(n-1,q)$. Since there are $q^{n-1}$ points to cover and each subspace covers at most $q^{n-2}$ points, 
  we have $\#\cU\ge q$. Now consider a hyperplane $K$ of $H_\infty$. By $H_1,\dots,H_q$ we denote the $q$ hyperplanes containing $K$ and not being equal to $H_{\infty}$. 
  With this, $\left\{H_1,\dots,H_q\right\}$ is an irreducible {\avsp} of $\PG(n-1,q)$.
\end{proof}                            

For a vector space partition $\cP$ of $\PG(n-1,q)$ we have $\dim(A)+\dim(B)\le n$ for each pair $\{A,B\}$ of different elements of $\cP$, which is also called \emph{dimension 
condition}. Using this it can be easily shown that $\#\cP\ge \tfrac{q^n-1}{q^{n/2}-1}=q^{n/2}+1$ if $n$ is even and $\#\cP\ge q^{(n+1)/2} +1$ if $n$ is odd. Both bounds can be attained by 
spreads, i.e., vector space partitions of type $(n/2)^{q^{n/2}+1}$, and lifted MRD codes of maximum possible rank distance, i.e., vector space partitions of type 
$((n+1)/2)^1 ((n-1)/2)^{q^{(n+1)/2}}$, respectively. In \cite{nuastase2011minimum} the authors determine the minimum size $\sigma_q(n,t)$ of a vector space partition of $\PG(n,q)$ 
whose largest subspace has dimension $t$.

\begin{lemma}
  \label{lemma_irreducible_no_multiset}
  Let $\cU$ be an irreducible {\avsp} of $\PG(n-1,q)$ and $U_1,\dots,U_q\in\cU$ be $q$ different elements with 
  $\dim(U_1)=\dots=\dim(U_q)$ and $\dim\left(\left\langle U_1,\dots, U_q\right\rangle\right)=\dim(U_1)+1$. Then we have $\dim(U_1)=\dots=\dim(U_q)=n-1$.
\end{lemma}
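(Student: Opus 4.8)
The plan is to argue by contradiction: writing $d\coloneq\dim(U_1)=\dots=\dim(U_q)$, I will show that if $d\le n-2$ then the $q$ given subspaces themselves form an {\avsp} of the subspace they span, which is a proper subspace of $\PG(n-1,q)$, contradicting irreducibility. Since $\dim(U_i)\le n-1$ holds by definition, this forces $d=n-1$.

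First I would set $W\coloneq\langle U_1,\dots,U_q\rangle$, so $\dim(W)=d+1$ by hypothesis. Because $U_1\le W$ and $U_1\not\le H_\infty$, we have $W\not\le H_\infty$, so $K\coloneq W\cap H_\infty$ is a hyperplane of $W$ and $W$ has exactly $q^{d}$ points outside $H_\infty$. For each $i$ we have $U_i\le W$, hence $U_i\cap H_\infty=U_i\cap K$, and $U_i$ has exactly $q^{d-1}$ points outside $H_\infty$.

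The heart of the argument is a cardinality count. Since $\cU$ partitions the points outside $H_\infty$ and the $U_i$ are pairwise distinct, the sets $U_1\setminus H_\infty,\dots,U_q\setminus H_\infty$ are pairwise disjoint; their union therefore contains exactly $q\cdot q^{d-1}=q^{d}$ points and lies inside the $q^{d}$-point set $W\setminus H_\infty$. Equality of cardinalities yields $\bigcup_{i=1}^{q}\bigl(U_i\setminus H_\infty\bigr)=W\setminus H_\infty$, i.e.\ every point of $W$ outside $K$ is covered by exactly one $U_i$. Together with $U_i\not\le K$ and $1\le\dim(U_i)\le\dim(W)-1$, this is precisely the statement that $\{U_1,\dots,U_q\}$ is an {\avsp} of $W$ with hyperplane at infinity $K$.

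Finally, suppose for contradiction that $d\le n-2$, i.e.\ $\dim(W)<n$. Then $\{U_1,\dots,U_q\}$ cannot equal $\cU$, since it covers only $q^{d}<q^{n-1}$ points outside $H_\infty$ whereas $\cU$ covers all $q^{n-1}$ of them; hence the index set $S$ of $U_1,\dots,U_q$ satisfies $S\subsetneq\{1,\dots,r\}$ and $\#S=q>1$. With $U\coloneq W$ (of dimension $<n$), the subspace $U$ and the set $S$ witness that $\cU$ is reducible, a contradiction. Therefore $d\ge n-1$, and combined with $\dim(U_i)\le n-1$ we conclude $\dim(U_1)=\dots=\dim(U_q)=n-1$. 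The argument is entirely elementary; the only point that genuinely needs care is checking that the exhibited subfamily is a \emph{proper} subfamily of $\cU$, which is exactly why the strict inequality $q^{d}<q^{n-1}$ is recorded.
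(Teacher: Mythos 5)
Your proof is correct and follows essentially the same route as the paper: count the $q^{d}$ affine points of the span $W$ against the $q\cdot q^{d-1}$ affine points of the pairwise disjoint $U_i$, conclude they can be joined to $W$, and invoke irreducibility to force $\dim(W)=n$. Your explicit check that $\{U_1,\dots,U_q\}$ is a \emph{proper} subfamily of $\cU$ when $d\le n-2$ is a small point the paper leaves implicit, but the argument is the same.
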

\begin{proof}
  Let $U\coloneq\left\langle U_1,\dots, U_q\right\rangle$ and $u\coloneq\dim(U_1)$. Since $U\backslash H_\infty$ contains $q^u$ points and $U_i\backslash H_\infty$ contains 
  $q^{u-1}$ points for each $1\le i\le q$, the set $\cU\backslash\left\{U_1,\dots,U_q\right\}\cup\{U\}$ is an {\avsp} unless $\dim(U)=u+1=n$. 
\end{proof}

\begin{corollary}
  Let $\cU$ be an irreducible {\avsp} of $\PG(n-1,2)$ and $U_1,U_2\in\cU$ be two different elements with $\dim(U_1)=\dim(U_2)=\dim(U_1\cap U_2)+1$. 
  Then, we have $\dim(U_1)=\dim(U_2)=n-1$.     
\end{corollary}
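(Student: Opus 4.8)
The plan is to derive this as an immediate specialization of Lemma~\ref{lemma_irreducible_no_multiset} to the case $q=2$. The only thing that needs to be checked is that the pair $U_1,U_2$ actually satisfies the hypotheses of that lemma, namely that $\left\langle U_1,U_2\right\rangle$ has dimension exactly $\dim(U_1)+1$.

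First I would invoke the Grassmann dimension formula in $\PG(n-1,2)$, which gives
\[
  \dim\!\left(\left\langle U_1,U_2\right\rangle\right)=\dim(U_1)+\dim(U_2)-\dim(U_1\cap U_2).
\]
Substituting the hypothesis $\dim(U_2)=\dim(U_1)$ and $\dim(U_1\cap U_2)=\dim(U_1)-1$ yields $\dim\!\left(\left\langle U_1,U_2\right\rangle\right)=\dim(U_1)+1$. Since $q=2$, the two distinct elements $U_1,U_2$ are precisely a family of $q$ distinct elements of $\cU$ of common dimension whose span exceeds that dimension by one, so Lemma~\ref{lemma_irreducible_no_multiset} applies verbatim and forces $\dim(U_1)=\dim(U_2)=n-1$.

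There is essentially no obstacle here: the statement is a corollary in the literal sense, and the only bookkeeping is the observation that over $\F_2$ the condition ``$\dim(U_1\cap U_2)=\dim(U_1)-1$'' is equivalent to ``$\dim\!\left(\left\langle U_1,U_2\right\rangle\right)=\dim(U_1)+1$'' for two subspaces of equal dimension. One could alternatively rerun the short argument of Lemma~\ref{lemma_irreducible_no_multiset} directly: if $U\coloneq\left\langle U_1,U_2\right\rangle$ had dimension less than $n$, then since $U\backslash H_\infty$ contains $2^{\dim(U_1)}$ points, which is exactly the number of points in $(U_1\backslash H_\infty)\cup(U_2\backslash H_\infty)$, replacing $U_1,U_2$ by $U$ in $\cU$ would produce a reducing subspace, contradicting irreducibility; hence $\dim(U)=n$, i.e.\ $\dim(U_1)=\dim(U_2)=n-1$.
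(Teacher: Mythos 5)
Your proposal is correct and matches the paper's intent exactly: the corollary is the $q=2$ specialization of Lemma~\ref{lemma_irreducible_no_multiset}, and your use of the dimension formula $\dim(U_1\cap U_2)+\dim(\langle U_1,U_2\rangle)=\dim(U_1)+\dim(U_2)$ to verify $\dim(\langle U_1,U_2\rangle)=\dim(U_1)+1$ is precisely the bookkeeping needed. Your alternative direct counting argument is just the lemma's own proof rerun for $q=2$, so there is no substantive difference in approach.
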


\noindent 
As an 
analog of the dimension condition for vector space partitions in $\PG(n-1,q)$ we have:
\begin{lemma}
  \label{lemma_dimension_condition}
  Let $\cU$ be an {\avsp} in $\cU$. For each $U,U'\in \cU$ we have
  \begin{equation}
    \dim(U\cap U')=\dim(U\cap U'\cap H_\infty)\ge \dim(U)+\dim(U')-n.
  \end{equation}
\end{lemma}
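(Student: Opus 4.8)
The plan is to prove the two claims in turn: first the inequality $\dim(U \cap U') \ge \dim(U) + \dim(U') - n$, and then the equality $\dim(U \cap U') = \dim(U \cap U' \cap H_\infty)$, which is the geometrically meaningful part.

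For the inequality, I would work in the ambient projective space $\PG(n-1,q)$ and apply the standard Grassmann (dimension) identity $\dim(\langle U, U'\rangle) = \dim(U) + \dim(U') - \dim(U \cap U')$. Since $U$ and $U'$ are both subspaces of the $(n-1)$-dimensional space $\PG(n-1,q)$, we have $\dim(\langle U, U'\rangle) \le n-1 < n$, and rearranging gives $\dim(U \cap U') \ge \dim(U) + \dim(U') - (n-1) > \dim(U) + \dim(U') - n$, so in particular $\dim(U \cap U') \ge \dim(U) + \dim(U') - n$. This step is routine.

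For the equality, the key point is that $U \setminus H_\infty$ and $U' \setminus H_\infty$ are disjoint affine spaces (since $\cU$ is an {\avsp}, distinct elements cover disjoint sets of affine points), unless $U = U'$ — and if $U = U'$ the identity $\dim(U \cap U') = \dim(U) = \dim(U \cap H_\infty)+1 \ne \dim(U\cap U' \cap H_\infty)$ would fail, so I should first note that the lemma is implicitly about \emph{distinct} $U, U'$ (or handle $U=U'$ as giving a trivial statement). Assuming $U \ne U'$: suppose for contradiction that $U \cap U'$ is not contained in $H_\infty$, i.e.\ it contains an affine point $p \notin H_\infty$. Then $p \in U \setminus H_\infty$ and $p \in U' \setminus H_\infty$, contradicting that the affine parts of $U$ and $U'$ partition a subset of the affine points and hence are disjoint. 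Therefore $U \cap U' \subseteq H_\infty$, which forces $U \cap U' = U \cap U' \cap H_\infty$, giving the claimed equality.

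The main (and only real) obstacle is the bookkeeping around the degenerate case $U = U'$ and making precise that "every affine point lies in exactly one element of $\cU$" implies the affine parts of two distinct elements are disjoint; once that is said cleanly, the contradiction is immediate. I would write the argument so that the equality is established first via this disjointness observation, and then append the Grassmann-identity computation for the inequality, noting along the way that $\dim(U \cap U' \cap H_\infty) = \dim(U \cap U')$ lets us replace $U \cap U'$ by its trace on $H_\infty$ in any further counting.
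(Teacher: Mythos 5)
Your proof is correct and takes essentially the same route as the paper's: the disjointness of the affine parts $U\setminus H_\infty$ and $U'\setminus H_\infty$ forces $U\cap U'\le H_\infty$ (giving the equality), and the Grassmann identity $\dim(U\cap U')+\dim(\langle U,U'\rangle)=\dim(U)+\dim(U')$ with the span bounded by the ambient space gives the inequality. One minor caveat: the paper works with \emph{algebraic} dimension, under which the ambient space has dimension $n$ and one only has $\dim(\langle U,U'\rangle)\le n$ (not $n-1$), so the correct bound is exactly $\dim(U)+\dim(U')-n$ rather than the strictly stronger intermediate bound you state; this does not affect the conclusion, and your remark that $U\neq U'$ must be assumed is a fair point the paper leaves implicit.
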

\begin{proof}
  Since $U\backslash H_\infty$, $U'\backslash H_\infty$ are disjoint and $U,U'\not\le H_\infty$ we have $\dim(U\cap U')=\dim(U\cap U'\cap H_\infty)$. The inequality 
  follows from $\dim(U_1\rev{\cap} U_2)+\dim(\langle U_1,U_2\rangle)=\dim(U_1)+\dim(U_2)$ and $\dim(\langle U_1,U_2\rangle)\le n$.  
\end{proof}

\noindent
Due to the following general construction for (irreducible) {\avsp}s we introduce a further condition. 
\begin{lemma}
  Let $\cU=\left\{U_1,\dots,U_r\right\}$ be an {\avsp} of $\PG(n-1,q)\eqcolon V$ and $P$ be a point outside of $V$ (\rev{embedded in $\PG(n,q)$}). Then, $\cU'\coloneq\left\{\left\langle U_1,P\right\rangle,\dots,\left\langle U_r,P\right\rangle\right\}$ is an {\avsp} 
  of $\langle V,P\rangle\cong \PG((n+1)-1,q)$, \rev{where the {\lq\lq}new{\rq\rq} hyperplane at infinity arises via $\langle H_\infty,P\rangle$ from the {\lq\lq}old{\rq\rq} hyperplane at infinity $H_\infty$}. Reducability  inherits, i.e.\ $\cU'$ is irreducible iff $\cU$ is irreducible.
\end{lemma}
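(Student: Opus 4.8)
The plan is to pass to $W\coloneq\langle V,P\rangle\cong\PG(n,q)$, in which $V$ is a hyperplane and $P\notin V$, and take as new infinity hyperplane $H_\infty'\coloneq\langle H_\infty,P\rangle$ (so $\dim H_\infty'=n$ and $P\in H_\infty'$). The one tool that drives everything is the projection $\pi$ from $P$ onto $V$: for every point $x\neq P$ of $W$ the line $\langle x,P\rangle$ is not contained in $V$, hence meets $V$ in a unique point $\pi(x)$, and $\pi$ fixes the points of $V$. Working in a vector space decomposition $\hat W=\hat V\oplus\hat P$ (write a representative of $x$ as $v+p$ with $v\in\hat V$, $p\in\hat P$), one checks the single clean equivalence
\begin{equation*}
  x\in\langle Y,P\rangle \iff \pi(x)\in Y
\end{equation*}
valid for every subspace $Y\le V$ and every point $x\neq P$ of $W$; taking $Y=H_\infty$ shows in addition that $x$ lies outside $H_\infty'$ exactly when $\pi(x)$ lies outside $H_\infty$, so $\pi$ carries affine points of $W$ to affine points of $V$ (and fixes those affine points already lying in $V$).

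For the first assertion, that $\cU'$ is an {\avsp} of $W$ with infinity $H_\infty'$: the bound $2\le\dim\langle U_i,P\rangle=\dim U_i+1\le n$ is immediate, and $\langle U_i,P\rangle\not\le\langle H_\infty,P\rangle$ follows from the displayed equivalence with $Y=H_\infty$ together with $U_i\not\le H_\infty$. For the partition property, take any point $x$ of $W$ outside $H_\infty'$; then $\pi(x)$ is an affine point of $V$, so it lies in exactly one $U_i$, and hence, by the equivalence, $x$ lies in exactly one $\langle U_i,P\rangle$. I would also record that $U\mapsto\langle U,P\rangle$ is injective on subspaces of $V$ (indeed $\langle U,P\rangle\cap V=U$), so that $\cU'$ genuinely has $\#\cU$ elements and subsets of $\cU$ correspond bijectively to subsets of $\cU'$.

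For "reducibility inherits" the forward direction is a direct re-run of the construction just established, now carried out inside $U$ and $\langle U,P\rangle$: if $\{U_i:i\in S\}$ is an {\avsp} of a proper subspace $U$ with $\#S>1$, then $\{\langle U_i,P\rangle:i\in S\}$ is an {\avsp} of $\langle U,P\rangle$, a subspace of dimension $\dim U+1<n+1$, and by injectivity it is a proper sub-collection of $\cU'$ of size greater than $1$, so $\cU'$ is reducible. The converse contains the one genuine idea: if $\{\langle U_i,P\rangle:i\in S\}$ is an {\avsp} of some subspace $U^{\ast}\le W$ with $\dim U^{\ast}<n+1$ and $\#S>1$, then $P$ lies in every $\langle U_i,P\rangle$, hence $P\in U^{\ast}$; therefore $U\coloneq U^{\ast}\cap V$ is a hyperplane of $U^{\ast}$ with $\langle U,P\rangle=U^{\ast}$ and $\dim U=\dim U^{\ast}-1<n$. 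Each $U_i$ with $i\in S$ lies in $U^{\ast}\cap V=U$, and for an affine point $y$ of $U$ (with respect to $U\cap H_\infty$) we have $\pi(y)=y$, so $y\in\langle U_i,P\rangle$ iff $y\in U_i$; since $y$ lies in exactly one $\langle U_i,P\rangle$ with $i\in S$, it lies in exactly one $U_i$ with $i\in S$. Hence $\{U_i:i\in S\}$ is an {\avsp} of $U$ and $\cU$ is reducible. The main obstacle, such as it is, is spotting that any reducing subspace of $\cU'$ must contain $P$; once that is in hand $\pi$ inverts the lifting cleanly, and the remaining bookkeeping — the dimension range, that $U\cap H_\infty$ is a hyperplane of $U$, and that $\dim U_i=\dim\langle U_i,P\rangle-1\le\dim U^{\ast}-2=\dim U-1$ — is routine.
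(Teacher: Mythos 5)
Your proof is correct, and there is nothing in the paper to compare it against: the lemma is stated there without proof, and your projection-from-$P$ argument, resting on the equivalence $x\in\langle Y,P\rangle \Leftrightarrow \pi(x)\in Y$ for subspaces $Y\le V$ and points $x\neq P$, is the natural way to fill that gap; the observation that any reducing subspace of $\cU'$ must contain $P$ is indeed the one non-mechanical step, and your injectivity remark $\langle U,P\rangle\cap V=U$ correctly handles the bookkeeping between subsets of $\cU$ and of $\cU'$. One small point worth making explicit in the converse direction: the paper's definition of an {\avsp} of $U^{\ast}$ only asserts the existence of \emph{some} hyperplane of $U^{\ast}$ witnessing the partition property, while your argument uses the induced hyperplane $U^{\ast}\cap\langle H_\infty,P\rangle$ when you claim that an affine point $y$ of $U$ lies in exactly one $\langle U_i,P\rangle$ with $i\in S$. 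This is harmless: the affine parts (with respect to $\langle H_\infty,P\rangle$) of the elements $\langle U_i,P\rangle$, $i\in S$, are pairwise disjoint because they belong to the global {\avsp} $\cU'$, each has size $q^{\dim\langle U_i,P\rangle-1}$, and by the packing condition for the witnessing hyperplane these sizes sum to $q^{\dim U^{\ast}-1}=\#\bigl(U^{\ast}\setminus\langle H_\infty,P\rangle\bigr)$, so they also partition $U^{\ast}\setminus\langle H_\infty,P\rangle$; with that one-line patch your argument is complete.
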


\begin{definition}
  Let $\cU=\left\{U_1,\dots,U_r\right\}$ be an {\avsp} of $\PG(n-1,q)$. We call $\cU$ \emph {tight} iff 
  the intersection of all $U_i$ does not contain a point, i.e.\ the intersection is trivial.
\end{definition}   
The same definition was proposed by Agievich~\cite{Agievich08}, under the name \emph{primitivity}, and was dubbed \emph{A-primitivity} by Tarannikov~\cite{Tarannikov22}.
We remark that an {\avsp} $\cA$ of $\AG(n,q)$ is tight iff for any $x\in\F_q^{n}$, there exists an $A\in\cA$ such that $A$ is not invariant under addition of $x$, that is $A+x \neq A$. 

\begin{lemma}
  \label{lemma_lb_tight_avsp}
  For each integer $n\ge 2$ there exists a tight {\avsp} of $\PG(n-1,q)$ with size $(q-1)\cdot(n-1) +1$. 
\end{lemma}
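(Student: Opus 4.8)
The plan is to exhibit the partition explicitly in the affine model. Identify the affine points of $\PG(n-1,q)$ with $\F_q^{n-1}$, so that a partition of $\F_q^{n-1}$ into (proper) affine subspaces yields, via projective closure, an {\avsp} of $\PG(n-1,q)$; recall from the remark above that such a partition $\cA$ is tight precisely when for every nonzero $x\in\F_q^{n-1}$ there is an $A\in\cA$ with $A+x\neq A$. For $i\in\{1,\dots,n-1\}$ and $a\in\F_q\setminus\{0\}$ set
\[
  A_{i,a}\coloneq\left\{x\in\F_q^{n-1}\,:\, x_1=\dots=x_{i-1}=0,\ x_i=a\right\},
\]
an affine subspace of dimension $n-1-i$ with linear part $W_i\coloneq\{x:x_1=\dots=x_i=0\}$, and put $A_0\coloneq\{0\}$. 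The candidate is $\cA\coloneq\{A_0\}\cup\{A_{i,a}:1\le i\le n-1,\ a\in\F_q\setminus\{0\}\}$.

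First I would check that $\cA$ is an {\avsp}: a nonzero $x$ lies in $A_{i,a}$ exactly when $i$ is the position of its first nonzero coordinate and $a=x_i$, while $0$ lies only in $A_0$, so every affine point is covered exactly once. This gives $\#\cA=(q-1)(n-1)+1$, consistent with the packing condition~(\ref{eq_packing_condition}) since $\sum_{i=1}^{n-1}(q-1)q^{(n-1)-i}+1=q^{n-1}$, and all members are proper (indeed $A_0$ and each $A_{n-1,a}$ are single points, which is allowed as $n-1\ge 1$). Tightness is then immediate: $A_{i,a}+x=A_{i,a}$ iff $x\in W_i$, and for a nonzero $x$ with first nonzero coordinate in position $j$ we have $x\notin W_j$, hence $A_{j,a}+x\neq A_{j,a}$; alternatively, the single point $A_0$ already fails to be invariant under every nonzero translation. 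So $\cA$, and therefore $\overline{\cA}$, is a tight {\avsp} of $\PG(n-1,q)$ of the required size.

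One could instead argue by induction on $n$: for $n=2$ the $q$ affine points of $\PG(1,q)$ form a tight {\avsp} of size $q$, and for $n\ge 3$ one takes in $\PG(n-1,q)$ a codimension-two subspace $K\le H_\infty$ together with the $q$ hyperplanes $H_1,\dots,H_q$ through $K$ other than $H_\infty$ (so $H_i\cap H_\infty=K$ and $H_i\cong\PG(n-2,q)$, as in the proof that an irreducible {\avsp} has size at least $q$), and replaces $H_q$ by a tight {\avsp} $\mathcal{V}$ of $H_q$ having $K$ as its hyperplane at infinity; such a $\mathcal{V}$ exists by induction and has size $(q-1)(n-2)+1$. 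Then $\{H_1,\dots,H_{q-1}\}\cup\mathcal{V}$ is an {\avsp} of $\PG(n-1,q)$ of size $(q-1)+(q-1)(n-2)+1=(q-1)(n-1)+1$, and it is tight because its total intersection lies inside $\bigcap_{V\in\mathcal{V}}V$, which is already trivial. In both approaches everything is routine; the only point worth a moment's attention is confirming that the tightness property really holds --- via the ``first nonzero coordinate'' observation (or the presence of a single-point part) in the explicit construction, and via the inheritance of tightness from the inner {\avsp} in the inductive one (which also handles the degenerate case $q=2$, where $\{H_1,\dots,H_{q-1}\}$ is just one hyperplane).
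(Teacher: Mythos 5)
Your proof is correct and is essentially the paper's construction: the explicit family $A_{i,a}$ (first nonzero coordinate in position $i$, equal to $a$) together with $\{0\}$ is exactly the coordinate unrolling of the paper's recursion, which at each level keeps $q-1$ of the $q$ hyperplanes through a codimension-two subspace of $H_\infty$ and finally splits the remaining affine line into its $q$ points, giving size $(q-1)(n-1)+1$. Your tightness check (via the single-point part, equivalently the translation characterization) matches the paper's observation that the final configuration is trivially tight, so there is nothing further to add.
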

\begin{proof}
  Apply the following recursive construction. Start with an $(n-2)$-dimensional subspace $K$ of $H_\infty$ and consider the $q$ hyperplanes 
  $H_1,\dots,H_q$ containing $K$ but not being equal to $H_\infty$. Choose $q-1$ out of these and continue the iteration with the remaining 
  hyperplane until it becomes $2$-dimensional, i.e.\ a line. In the final step replace the affine line by $q$ points, so that the resulting 
  {\avsp} is trivially tight.
\end{proof}
A classical result in computer science, attributed to Tarsi, states that a minimally unsatisfiable \rev{Boolean formula in conjunctive normal form (CNF)} with $m$ clauses mentions at most $m-1$ variables, see e.g.\ 
\cite[Theorem 8]{DDK98}. The proof can be slightly modified to show that for $n\ge 2$ each tight {\avsp} of $\PG(n-1,2)$ has size at least $n$. \rev{One might conjecture that Lemma~\ref{lemma_lb_tight_avsp} is tight. Some preliminary results in that direction are proven in \cite[Sec. 3.2]{filmus2022irreducible}.}
The determination of the minimum size of an irreducible tight {\avsp} is 
quite a challenge and we will present our preliminary results in Sections~\ref{sec_classification_avsp} and \ref{sec_classification_ti_avsp}. 

Note that tightness and irreducibility can be checked efficiently.
In particular, for irreducibility it suffices to calculate 
the affine closure for all pairs of subspaces in the avsp.
We will show efficiency formally and provide detailed algorithms in future work on hitting formulas.


\begin{lemma}
  \label{lemma_characterization_full}
  Let $U$, $K$, and $H_\infty$ be subspaces in $\PG(n-1,q)$ with $K\le H_\infty$, $\dim(K)=n-2$, $\dim(H_\infty)=n-1$, and $\dim(U\cap H_\infty)=\dim(U)-1$, i.e.\ 
  $U\not\le H_\infty$. By $H_1,\dots,H_q$ we denote the $q$ hyperplanes containing $K$ but not being equal to $H_\infty$. Then the following statements are 
  equivalent:
  \begin{itemize}
    \item[(1)] $U\cap H_\infty\le K$;\\[-6mm]
    \item[(2)] there exists an index $1\le i\le q$ with $U\le H_i$;\\[-6mm]
    \item[(3)] there exists an index $1\le i\le q$ with $U\le H_i$ and $U\cap H_j=U\cap H_\infty=U\cap K$ for all $1\le j\le q$ with $j\neq i$;\\[-6mm]
    \item[(4)] $\dim(U\cap K)=\dim(U)-1$.
  \end{itemize}    
\end{lemma}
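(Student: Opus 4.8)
The plan is to prove the four equivalences via the cycle $(1)\Rightarrow(3)\Rightarrow(2)\Rightarrow(1)$ together with the separate equivalence $(1)\Leftrightarrow(4)$. Throughout I would use two elementary facts about the pencil of hyperplanes through $K$. First, any two distinct members of the pencil meet exactly in $K$: their join is the whole space, so the Grassmann identity $\dim(A\cap B)+\dim(\langle A,B\rangle)=\dim(A)+\dim(B)$ gives $\dim(H\cap H')=(n-1)+(n-1)-n=n-2=\dim(K)$, and since $K\le H\cap H'$ this forces equality. Second, I would use the Grassmann identity itself a second time to compute joins with $U$.

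For $(1)\Leftrightarrow(4)$, write $u=\dim(U)$. Since $K\le H_\infty$ we always have $U\cap K\le U\cap H_\infty$, and by hypothesis $\dim(U\cap H_\infty)=u-1$. If $(1)$ holds, i.e.\ $U\cap H_\infty\le K$, then $U\cap H_\infty\le U\cap K$ as well, so $U\cap K=U\cap H_\infty$ has dimension $u-1$, which is $(4)$. Conversely, if $\dim(U\cap K)=u-1=\dim(U\cap H_\infty)$, then the inclusion $U\cap K\le U\cap H_\infty$ is forced to be an equality, hence $U\cap H_\infty=U\cap K\le K$, which is $(1)$.

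For $(1)\Rightarrow(3)$, assume $U\cap H_\infty\le K$, so $\dim(U\cap K)=u-1$ by the previous paragraph. Then the Grassmann identity gives $\dim\langle U,K\rangle=u+(n-2)-(u-1)=n-1$, so $\langle U,K\rangle$ is a hyperplane containing $K$. It cannot equal $H_\infty$, since $U\le\langle U,K\rangle$ but $U\not\le H_\infty$; therefore $\langle U,K\rangle=H_i$ for some $1\le i\le q$, and in particular $U\le H_i$. For each $j\ne i$ the first fact above gives $H_i\cap H_j=K$ and $H_i\cap H_\infty=K$, whence $U\cap H_j=U\cap H_i\cap H_j=U\cap K$ and $U\cap H_\infty=U\cap H_i\cap H_\infty=U\cap K$; this is exactly $(3)$.

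Finally, $(3)\Rightarrow(2)$ is immediate, and for $(2)\Rightarrow(1)$, if $U\le H_i$ then $U\cap H_\infty=U\cap H_i\cap H_\infty=U\cap K\le K$, giving $(1)$. I do not expect a serious obstacle here; the only step needing a little care is $(1)\Rightarrow(3)$, where one must correctly recognize $\langle U,K\rangle$ as one of the $H_i$ and then read off all the intersection conditions from the structure of the pencil.
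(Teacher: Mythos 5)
Your proof is correct: the cycle $(1)\Rightarrow(3)\Rightarrow(2)\Rightarrow(1)$ together with $(1)\Leftrightarrow(4)$ covers all equivalences, and each step (the Grassmann computation $\dim\langle U,K\rangle=n-1$, the identification of $\langle U,K\rangle$ with one of the $H_i$, and the fact that distinct members of the pencil through $K$ meet exactly in $K$) is sound. The paper states this lemma without proof, treating it as a routine dimension-counting fact, and your argument is precisely the standard one that the authors leave to the reader.
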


\begin{lemma}
  \label{lemma_characterization_splitted}
  Let $U$, $K$, and $H_\infty$ be subspaces in $\PG(n-1,q)$ with $K\le H_\infty$, $\dim(K)=n-2$, $\dim(H_\infty)=n-1$, and $\dim(U\cap H_\infty)=\dim(U)-1$, i.e.\ 
  $U\not\le H_\infty$. By $H_1,\dots,H_q$ we denote the $q$ hyperplanes containing $K$ but not being equal to $H_\infty$. Then the following statements are 
  equivalent:
  \begin{itemize}
    \item[(1)] $U\cap H_\infty\not\le K$;\\[-6mm]
    \item[(2)] $\dim(U\cap H_i)=\dim(U)\rev{-1}$ for all $1\le i\le q$;\\[-6mm]
    \item[(3)] there are $q$ $(\dim(U)-1)$-spaces in $U$ containing $U\cap K$ and not being contained in $H_\infty$;\\[-6mm]
    \item[(4)] $\dim(U\cap K)=\dim(U)-2$.
  \end{itemize}    
\end{lemma}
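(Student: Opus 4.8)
The plan is to deduce the four equivalences from Lemma~\ref{lemma_characterization_full} together with one elementary dimension bound, so that almost nothing has to be recomputed. The key preliminary observation is that $U\cap K\le U\cap H_\infty$ and that $K$ has codimension $1$ in $H_\infty$; hence
\[
  \dim(U)-2\;\le\;\dim(U\cap K)\;\le\;\dim(U\cap H_\infty)\;=\;\dim(U)-1,
\]
and $\dim(U\cap K)=\dim(U)-1$ holds exactly when $U\cap H_\infty=U\cap K\le K$. Consequently statement~(1) is precisely the negation of statement~(1) of Lemma~\ref{lemma_characterization_full}, and statement~(4) is precisely the negation of statement~(4) of that lemma. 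Since all four statements of Lemma~\ref{lemma_characterization_full} are equivalent, so are their negations; in particular (1)$\Leftrightarrow$(4), and both are equivalent to the assertion that $U\not\le H_i$ for every $i$ (the negation of statement~(2) there).

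It then remains to splice statements~(2) and~(3) into this chain. For (1)$\Rightarrow$(2): granting (1), we have $U\not\le H_i$ for all $i$, so $\langle U,H_i\rangle$ is the whole space and the Grassmann identity gives $\dim(U\cap H_i)=\dim(U)+(n-1)-n=\dim(U)-1$, which is~(2). For (2)$\Rightarrow$(1): arguing by contraposition, if $U\cap H_\infty\le K$ then Lemma~\ref{lemma_characterization_full} yields an index $i$ with $U\le H_i$, whence $\dim(U\cap H_i)=\dim(U)$ and~(2) fails. For (4)$\Rightarrow$(3): under~(4) each $U\cap H_i$ is a $(\dim(U)-1)$-space containing $U\cap K$; it cannot lie in $H_\infty$, since $U\cap H_i\le H_\infty$ would force $U\cap H_i\le H_i\cap H_\infty=K$ and hence $\dim(U\cap K)\ge\dim(U)-1$, contradicting~(4); and the spaces $U\cap H_1,\dots,U\cap H_q$ are pairwise distinct, since $U\cap H_i=U\cap H_j$ with $i\neq j$ would put this space inside $H_i\cap H_j=K$, the same contradiction. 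So these $q$ spaces witness~(3). For (3)$\Rightarrow$(4): if instead $\dim(U\cap K)=\dim(U)-1$, then the only $(\dim(U)-1)$-space of $U$ containing $U\cap K$ is $U\cap K=U\cap H_\infty$ itself, which lies in $H_\infty$; so no family as in~(3) can exist, forcing $\dim(U\cap K)=\dim(U)-2$.

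I do not expect a genuine obstacle: the whole content is the codimension-one dichotomy for $U\cap K$ inside $U\cap H_\infty$ and the reuse of Lemma~\ref{lemma_characterization_full}, together with the Grassmann identity $\dim(U\cap H)=\dim(U)+\dim(H)-\dim\langle U,H\rangle$. The only steps needing a line of care are the two small facts invoked for~(3)---that the $q$ intersections $U\cap H_i$ are pairwise distinct and that none of them is contained in $H_\infty$---and both reduce to the identities $H_i\cap H_\infty=H_i\cap H_j=K$ for $i\neq j$ combined with $\dim(U\cap K)=\dim(U)-2$. Alternatively, one can run the argument directly, without citing Lemma~\ref{lemma_characterization_full}, by counting the $q+1$ hyperplanes of $U$ through the codimension-two subspace $U\cap K$ against the $q+1$ hyperplanes of the ambient space through $K$; but routing through the previous lemma is shorter.
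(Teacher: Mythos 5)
Your proof is correct. Note that the paper states both Lemma~\ref{lemma_characterization_full} and Lemma~\ref{lemma_characterization_splitted} without proof (they are among the routine facts left to the reader), so there is no official argument to compare against; your write-up supplies exactly the kind of verification the authors omitted. The structure is sound: the codimension-one dichotomy $\dim(U)-2\le\dim(U\cap K)\le\dim(U\cap H_\infty)=\dim(U)-1$, with equality to $\dim(U)-1$ iff $U\cap H_\infty\le K$, legitimately turns (1) and (4) into the negations of statements (1) and (4) of Lemma~\ref{lemma_characterization_full}, and the Grassmann identity with $\langle U,H_i\rangle$ being the whole space handles (1)$\Rightarrow$(2). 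The two delicate points for (3) --- that the spaces $U\cap H_1,\dots,U\cap H_q$ are pairwise distinct and that none lies in $H_\infty$ --- are correctly reduced to $H_i\cap H_j=H_i\cap H_\infty=K$ together with $\dim(U\cap K)=\dim(U)-2$, and the converse (3)$\Rightarrow$(4) via the observation that when $\dim(U\cap K)=\dim(U)-1$ the only hyperplane of $U$ through $U\cap K$ is $U\cap K$ itself, which sits inside $H_\infty$, is also fine. (Your parenthetical alternative, counting the $q+1$ hyperplanes of $U$ through the codimension-two subspace $U\cap K$ against the $q+1$ hyperplanes of the ambient space through $K$, is the direct argument one would give without citing the previous lemma; either route is acceptable.)
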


Assume that $\cP$ is a vector space partition of $\PG(n-1,q)$ with type  $k_1^{m_1} \dots k_l^{m_l}$, where $k_1 > \dots > k_l$ and $\rev{m}_i>0$ for all $1\le i\le l$. The so-called 
\emph{tail} $\cT$ of $\cP$ is the set of all $k_l$-spaces in $\cP$, i.e., the set of all elements with the smallest occurring dimension. In \cite{heden2009length} several
conditions on $\#\cT$ have been obtained. In our situation we can also consider the \emph{tail} $\cT\coloneq\left\{U\in\cU\,:\,\dim(U)=k_l\right\}$ of an {\avsp } 
of $\PG(n-1,q)$ with type $k_1^{m_1} \dots k_l^{m_l}$, where $k_1 > \dots > k_l$ and $\rev{m}_i>0$ for all $1\le i\le l$. The packing condition~(\ref{eq_packing_condition}) directly 
implies that $q^{k_{l-1}-k_{l}}$ divides $\#\cT=m_l$ if $l\ge 2$ and that \rev{$q^{n-k_l}$} divides $\#\cT=m_l$ if $l=1$. In \cite{kurz2018heden} the results on the tail of a vector space partition 
of $\PG(n-1,q)$ were refined using the notion of $\Delta$-divisible sets of $k$-spaces.

\begin{definition}
  A (multi-)set $\cS$ of $k$-spaces in $\PG(n-1,q)$ is called $\Delta$-divisible iff $\#\cS\equiv \#(H\cap \cS)\pmod\Delta$ for every hyperplane $H$, where $H\cap \cS$ 
  denotes the (multi-)set of elements of $\cS$ that are contained in $H$.  
\end{definition}

\rev{We use the notation $\{\{\star\}\}$ for a multiset $\cS$ and $\#\cS$ for its cardinality.}

\begin{lemma}
  \label{lemma_divisible_sets_of_k_spaces}
  Let $\cU$ be an {\avsp} of $\PG(n-1,q)$ of type $k_1^{m_1}\dots k_l^{m_l}$, where $k_1>\dots>k_l>1$ and $\rev{m}_i>0$ for all $1\le i\le l$. Let 
  $\cT\coloneq\left\{U\in\cU\,:\, \dim(U)=k_l\right\}$ be the tail of $\cU$ and $\cT'\coloneq\left\{ T\cap H_\infty\,:\, T\in\cT \right\}$. If $l\ge 2$, then 
  $\cT'$ is $q^{k_{l-1}-k_l}$-divisible and $\#\cT=\#\cT'\equiv 0\pmod {q^{k_{l-1}-k_l}}$. If $l=1$, then $\cT'$ is $q$-divisible and $\#\cT=\#\cT'\equiv 0\pmod {\rev{q^{n-k_l}}}$. 
\end{lemma}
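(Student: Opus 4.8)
The plan is to split the statement into the easy cardinality part and the substantive divisibility part. For the cardinality, $\#\cT=\#\cT'$ holds because each $T\in\cT$ is a $k_l$-space with $T\not\le H_\infty$, so $T\mapsto T\cap H_\infty$ is a bijection of $\cT$ onto the multiset $\cT'$; and the congruence $q^{k_{l-1}-k_l}\mid\#\cT=m_l$ (for $l\ge 2$) is exactly the observation made just before the lemma — reduce the packing condition~(\ref{eq_packing_condition}) modulo $q^{k_{l-1}-1}$, which annihilates every summand with $i\ge k_{l-1}$ and the right-hand side (since $k_{l-1}\le n-1$), leaving $m_lq^{k_l-1}\equiv 0$; for $l=1$ the packing condition is $m_1q^{k_1-1}=q^{n-1}$ and yields $q^{n-k_l}\mid m_l$ directly.

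The core is the $\Delta$-divisibility of $\cT'$, and here I would mimic the classical hyperplane-section argument for the tail of a projective vector space partition, adapted to the affine setting. Fix a hyperplane $H$ of $\PG(n-1,q)$. The case $H=H_\infty$ is trivial since $\cT'$ lies inside $H_\infty$, so take $H\neq H_\infty$ and set $K\coloneq H\cap H_\infty$; note $T\cap H_\infty\le H$ iff $T\cap H_\infty\le K$. Using Lemma~\ref{lemma_characterization_full} and Lemma~\ref{lemma_characterization_splitted}, every $U\in\cU$ falls into exactly one of three classes according to its position relative to $H$ and $K$: $U\le H$; $U\cap H_\infty\le K$ but $U\not\le H$ (then $U\cap H\le H_\infty$); or $U\cap H_\infty\not\le K$ (then $\dim(U\cap H)=\dim U-1$ and $U\cap H\not\le H_\infty$). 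Counting the $(q-1)q^{n-2}$ affine points of $\PG(n-1,q)$ lying outside $H$ — to which the three classes contribute $0$, $q^{\dim U-1}$, and $q^{\dim U-1}-q^{\dim U-2}$ respectively — one obtains
\begin{equation*}
  (q-1)q^{n-2}=\sum_{\substack{U\in\cU\\ U\cap H_\infty\le K,\ U\not\le H}}q^{\dim U-1}+(q-1)\sum_{\substack{U\in\cU\\ U\cap H_\infty\not\le K}}q^{\dim U-2}.
\end{equation*}

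I would then reduce this identity modulo $q^{k_{l-1}-2}$ (modulo $q^{k_l-1}$ when $l=1$): all contributions of elements of dimension $\ge k_{l-1}$ vanish, as does the left-hand side, and after dividing by $q^{k_l-2}$ one is left with $\gamma q+(q-1)\beta\equiv 0\pmod{q^{k_{l-1}-k_l}}$ for $l\ge 2$ (respectively $\beta\equiv 0\pmod q$ for $l=1$), where $\gamma$ and $\beta$ count the tail elements in the second and third class. Since $\#(K\cap\cT')=\#\cT-\beta$, this already settles $l=1$; for $l\ge 2$ it gives $\beta\equiv 0\pmod q$, hence $q$-divisibility of $\cT'$, and — combining with $q^{k_{l-1}-k_l}\mid\#\cT$ and the identity $\gamma+\beta+a=\#\cT$, where $a$ is the number of tail elements contained in $H$ — it simplifies to $\beta\equiv -qa\pmod{q^{k_{l-1}-k_l}}$. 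Thus $\cT'$ will be $q^{k_{l-1}-k_l}$-divisible as soon as $q^{k_{l-1}-k_l-1}\mid a$ for every $H$; this is vacuous when $k_{l-1}=k_l+1$.

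The remaining task, $q^{k_{l-1}-k_l-1}\mid\#\{T\in\cT:T\le H\}$ for $l\ge 2$ with $k_{l-1}-k_l\ge 2$, I would handle by induction on $n$. The restriction $\cU|_H\coloneq\{U\in\cU:U\le H\}\cup\{U\cap H:U\in\cU,\ U\cap H_\infty\not\le K\}$ is again an affine vector space partition, now of $H\cong\PG(n-2,q)$ with hyperplane at infinity $K$, and the tail elements of $\cU$ contained in $H$ reappear among the smallest- (or second-smallest-) dimensional elements of $\cU|_H$; with the base cases ($n$ small) checked by hand, the inductive hypothesis applied to $\cU|_H$ delivers the divisibility of $a$. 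The hard part will be the case distinction here: the type of $\cU|_H$, and thus which divisibility statement about $\cU|_H$ one must invoke, depends on how many tail elements of $\cU$ are split by $H$ (i.e. fall into the third class) and on whether the gap $k_{l-1}-k_l$ equals $1$ or is larger, so several cases must be separated and matched against the inductive hypothesis — precisely the kind of bookkeeping carried out for ordinary vector space partitions in~\cite{kurz2018heden}.
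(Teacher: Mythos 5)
Your set-up --- fixing $K=H\cap H_\infty$, counting the affine points of the hyperplanes through $K$ that are covered by tail elements, and reducing modulo $q^{k_{l-1}-2}$ (resp.\ $q^{k_l-1}$) --- is sound, and it is essentially the same counting the paper performs; the cardinality claims, the case $l=1$, and the case $k_{l-1}=k_l+1$ are complete in your write-up. In fact your three-way classification is \emph{more} careful than the paper's: the paper takes the number of tail-covered points of $H\setminus H_\infty$ to be $a\,q^{k_l-1}+(\#\cT-a)\,q^{k_l-2}$, i.e.\ it treats every tail element not contained in $H$ as if it were split by $H$, and this lets it conclude $(q-1)a\equiv 0\pmod{q^{k_{l-1}-k_l}}$, hence $a\equiv 0\pmod{q^{k_{l-1}-k_l}}$, in one stroke. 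Your second class --- tail elements lying in one of the \emph{other} hyperplanes through $K$, which contribute $0$ points to $H$ --- is exactly the term that count ignores, and nothing in the hypotheses excludes it. This is precisely why your (correct) bookkeeping only yields $\beta\equiv-qa\pmod{q^{k_{l-1}-k_l}}$ and leaves you owing $q^{k_{l-1}-k_l-1}\mid a$ for every $H$.

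That remaining obligation is a genuine gap, and it matters: the lemma is used with $k_{l-1}-k_l=2$ (the $4$-divisibility that eliminates the types $5^24^32^4$, $5^14^52^4$, $4^72^4$ in Proposition~\ref{prop_tiavsp_pg_6_2}), so the gap-one case you settled is not enough. The induction you sketch does not close it. Passing to $\cU|_H$, an {\avsp} of $H\cong\PG(n-2,q)$ with hyperplane at infinity $K$, the $\beta$ split tail elements become the new tail in dimension $k_l-1$, while the $a$ contained tail elements sit in dimension $k_l$, exactly one higher. So in the only problematic case, $a>0$ and $\beta>0$ (if $\beta=0$ there is nothing to prove, and if $a=0$ your congruence already gives $q^{k_{l-1}-k_l}\mid\beta$), the inductive hypothesis applied to $\cU|_H$ returns merely $q\mid\beta$, which you already know, and says nothing about $a$, because the statement being inducted on constrains only the smallest-dimensional class. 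To make the induction work you would need a strictly stronger statement (control of the second-smallest class, in the spirit of the refined tail analysis of \cite{kurz2018heden}) together with explicit base cases, or a different argument altogether; as written, the final and decisive step of your proof is a plan rather than a proof.
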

\begin{proof}
  Clearly we have $\#\cT=\#\cT'$. 
  From the packing condition (\ref{eq_packing_condition}) we directly conclude $\#\cT\equiv 0\pmod {q^{k_{l-1}-k_l}}$ if $l\ge 2$ and $\#\cT\rev{=q^{n-k_l}}\equiv 0\pmod {\rev{q^{n-k_l}}}$ if $l=1$. Let 
  $K$ be an arbitrary hyperplane of $H_\infty$ and $H_1,\dots,H_q$ be the $q$ hyperplanes of $\PG(n-1,q)$ not being equal to $H_\infty$. Call the points outside of $H_\infty$ that 
  are contained in some element of $\cU$ with dimension strictly larger than $k_l$ covered and all others outside of $H_\infty$ uncovered. Since each $k$-space 
  covers either $q^{k-1}$, $q^{k-2}$, \rev{or $0$} points of $H_i\backslash H_\infty$, the number of uncovered points in 
  $H_i\backslash H_\infty$ is divisible by $q^{k_{l-1}-2}$ if $l\ge 2$ and by $q^{k_l-1}$ if $l=1$, where $1\le i\le q$ is arbitrary. Let $a$ be the number of $k_l$-spaces 
  in $\cU$ that are completely contained in $H_i$, so that the number of uncovered points in $H_i$ equals 
  $$
    x\coloneq a\cdot q^{k_l-1}+(\#\cT-a)\cdot q^{k_l-2}.
  $$       
  If $l\ge 2$ we have $x\equiv 0 \pmod {q^{k_{l-1}-2}}$ and $\#\cT\equiv 0 \pmod {q^{k_{l-1}-k_l}}$, so that $(q-1)a\equiv 0\pmod {q^{k_{l-1}-k_l}}$ and $a\equiv 0\pmod {q^{k_{l-1}-k_l}}$. 
  If $l=1$ we have $x\equiv 0 \pmod {q^{k_{l}-1}}$ and $\#\cT\equiv 0 \pmod q$, so that $(q-1)a\equiv 0\pmod q$ and $a\equiv 0\pmod q$.   
\end{proof}

$\Delta$-divisible (multi-)sets $\cS$ of $k$-spaces in $\PG(n-1,q)$ have been studied in \cite{kurz2018heden}. If we replace each $k$-space by its $\tfrac{q^k-1}{q-1}$ points we obtain 
a $\Delta q^{k-1}$-divisible multiset of $\#\cS\cdot\tfrac{q^k-1}{q-1}$ points in $\PG(n-1,q)$. The possible cardinalities, given the divisibility constant and the field size, have 
been completely characterized in \cite[Theorem 1]{kiermaier2020lengths}. Here we will use only a few results on the possible structure of the tail (or more precisely of $\cT'$) which allow 
more direct proofs.
\begin{lemma}
  Let $\cU$ be an {\avsp} of $\PG(n-1,\rev{2})$ with tail $\cT$. If $\#\cT=\rev{2}$, then either $\cU$ is reducible or we have $\cU=\cT$ and \rev{$\dim(U)=n-1$ for all $U\in\cU$}.
\end{lemma}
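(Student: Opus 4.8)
The claim is about a binary avsp $\cU$ of $\PG(n-1,2)$ whose tail $\cT$ (the set of elements of smallest dimension $k_l$) has size exactly $2$. I want to show that either $\cU$ is reducible, or $\cU=\cT$ with both elements being hyperplanes. The plan is to split on the value of $l$, the number of distinct dimensions occurring in $\cU$.

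\textbf{Case $l=1$.} Here $\cU=\cT$ and every element has the same dimension $k_1=k_l$. By Lemma~\ref{lemma_divisible_sets_of_k_spaces}, $q^{n-k_l}=2^{n-k_l}$ divides $\#\cT=2$, so $n-k_l=1$, i.e.\ $k_l=n-1$ and every element of $\cU$ is a hyperplane. This is exactly the second alternative in the statement, so there is nothing more to do in this case.

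\textbf{Case $l\ge 2$.} Now I would first apply Lemma~\ref{lemma_divisible_sets_of_k_spaces} again: $q^{k_{l-1}-k_l}=2^{k_{l-1}-k_l}$ divides $\#\cT=2$, forcing $k_{l-1}=k_l+1$. Write $U_1,U_2$ for the two elements of the tail and $u\coloneq k_l=\dim(U_1)=\dim(U_2)$. The next step is to understand $\dim(U_1\cap U_2)$; I want to show it equals $u-1$, i.e.\ $\dim\langle U_1,U_2\rangle=u+1$. By the dimension condition (Lemma~\ref{lemma_dimension_condition}) we have $\dim(U_1\cap U_2)=\dim(U_1\cap U_2\cap H_\infty)\ge 2u-n$, but that is a weak lower bound; the real input should be a counting/divisibility argument at infinity. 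Indeed $U_1'\coloneq U_1\cap H_\infty$ and $U_2'\coloneq U_2\cap H_\infty$ are two $(u-1)$-spaces, and from the divisibility statement of Lemma~\ref{lemma_divisible_sets_of_k_spaces} the two-element set $\cT'=\{U_1',U_2'\}$ is $2^{k_{l-1}-k_l}=2$-divisible: every hyperplane $K$ of $H_\infty$ contains either $0$ or $2$ of $U_1',U_2'$ (it cannot contain exactly one). In particular $U_1'$ and $U_2'$ lie in exactly the same hyperplanes of $H_\infty$, which (since distinct subspaces are separated by some hyperplane through them unless one contains the other) forces $U_1'=U_2'$, so $\dim(U_1\cap U_2)=u-1$ and hence $\dim\langle U_1,U_2\rangle=u+1$.

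\textbf{Finishing via irreducibility.} Now I apply Lemma~\ref{lemma_irreducible_no_multiset} in its binary form, i.e.\ the Corollary following it: $U_1,U_2$ are two distinct elements of $\cU$ with $\dim(U_1)=\dim(U_2)=\dim(U_1\cap U_2)+1$, so if $\cU$ were irreducible we would get $\dim(U_1)=\dim(U_2)=n-1$. But $U_1,U_2$ have the \emph{smallest} dimension $k_l$ in $\cU$ and $l\ge 2$ means there is a strictly larger dimension present, so $k_l<n-1$ --- contradiction. Hence when $l\ge 2$ the avsp $\cU$ must be reducible. Combining the two cases gives the statement. The main obstacle in this argument is the middle step --- pinning down that $U_1'=U_2'$; the neat way is to read ``$2$-divisibility of a $2$-set of $(u-1)$-spaces'' as ``no hyperplane of $H_\infty$ separates them,'' and then invoke that a proper containment is impossible since both have the same dimension, so equality follows. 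Everything else is bookkeeping with the packing/divisibility lemmas already established.
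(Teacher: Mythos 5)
Your proof is correct and follows essentially the same route as the paper: Lemma~\ref{lemma_divisible_sets_of_k_spaces} gives $2$-divisibility of $\cT'$, hence the two tail elements meet $H_\infty$ in the same $(k_l-1)$-space, and Lemma~\ref{lemma_irreducible_no_multiset} (in your case its binary corollary) finishes the argument. Your explicit split into $l=1$ (handled by the packing/divisibility count directly) and $l\ge 2$ is only an organizational variant of the paper's uniform treatment.
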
 
\begin{proof}
  Denote the dimension of the elements of $\cT$ by $k$. Lemma~\ref{lemma_divisible_sets_of_k_spaces} yields that $\cT'\coloneq\left\{T\rev{\cap} H_\infty\,:\, T\in\cT\right\}$ 
  is a $\rev{2}$-divisible multiset of $(k-1)$-spaces. So, each hyperplane of $H_\infty$ contains either all $\rev{2}$ or zero elements from $\cT'$, so that $\cT'$ is a $\rev{q}$-fold $(k-1)$-space. 
  With this, the stated results follows from Lemma~\ref{lemma_irreducible_no_multiset} \rev{applied to $\cT$}.  
\end{proof}

\begin{corollary}
  \label{corollary_no_tail_of_size_q}
  Let $\cU$ be an irreducible {\avsp} of $\PG(n-1,q)$ of type $k_1^{m_1}\dots k_l^{m_l}$, where $k_1>\dots>k_l$ and $k_i>0$ for all $1\le i\le l$. 
  If $m_l=q$, then we have $l=1$ and $k_1=n-1$.
\end{corollary}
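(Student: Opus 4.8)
The strategy is proof by contradiction: assume $\cU$ is irreducible with $m_l=q$ and show it must consist of $q$ hyperplanes. Write $k\coloneq k_l$ and $\cT=\{T_1,\dots,T_q\}$ for the tail. The case $l=1$ is immediate: the packing condition~(\ref{eq_packing_condition}) reads $q\cdot q^{k-1}=q^{n-1}$, so $k=n-1$, as claimed. Hence assume $l\ge 2$; then $k\le k_{l-1}-1\le n-2$, and I aim to exhibit a proper sub-{\avsp}, contradicting irreducibility. Note that the packing condition already forces $q^{k_{l-1}-k}\mid m_l=q$, so $k_{l-1}=k+1$: the two smallest occurring part-dimensions are consecutive.

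Next I pin down the tail at infinity, generalizing the proof of the lemma immediately preceding this corollary (the case $q=2$). Assume for now $k\ge 2$. By Lemma~\ref{lemma_divisible_sets_of_k_spaces} the multiset $\cT'\coloneq\{\{T_i\cap H_\infty\}\}$ is $q$-divisible and consists of $q$ subspaces of dimension $k-1$, so every hyperplane of $H_\infty$ contains either all $q$ of them or none. If $\cT'$ contained two distinct $(k-1)$-spaces $W_1\ne W_2$, then, since $\dim H_\infty=n-1\ge k+1$, one extends $W_1$ to a hyperplane $K$ of $H_\infty$ avoiding a fixed point of $W_2\setminus W_1$; this $K$ contains $W_1$ but not $W_2$, contradicting the dichotomy. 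Hence $\cT'$ is a $q$-fold $(k-1)$-space: there is a common $W\le H_\infty$ with $T_i\cap H_\infty=W\le T_i$ for every $i$.

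The crux is to show that $\dim\langle T_1,\dots,T_q\rangle=k+1$. Granting this, put $U\coloneq\langle T_1,\dots,T_q\rangle$; then $U\setminus H_\infty$ has $q^{k}$ points, which are precisely the $q^{k}$ pairwise disjoint points lying in the $T_i\setminus H_\infty$, so $\{T_1,\dots,T_q\}$ is an {\avsp} of the proper subspace $U$ with $q\ge 2$ parts — a reduction of $\cU$, contradiction (equivalently, Lemma~\ref{lemma_irreducible_no_multiset} applies to $T_1,\dots,T_q$ and gives $k=n-1$, against $k\le n-2$). To obtain the dimension claim I would work inside $Z\coloneq\langle T_1,T_2\rangle$, a $(k+1)$-space meeting $H_\infty$ in a $k$-space $W^+\ge W$: by Lemma~\ref{lemma_characterization_full}, applied to the pencil of hyperplanes of $Z$ through $W$, the dimension-$k$ subspaces of $Z$ containing $W$ other than $W^+$ are $q$ in number and partition $Z\setminus H_\infty$, with $T_1$ and $T_2$ among them. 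If one of these, say $S$, were not a member of $\cU$, then — because the only dimension-$k$ subspace through $W$ and a given point $x\in S\setminus H_\infty$ is $S$ itself — every affine point of $S$ would be covered by a part of $\cU$ of dimension $\ge k+1$; such a part cannot contain $Z$, hence meets $Z$ in dimension $\le k$, and a short accounting of how these intersections can tile the parallel $(k-1)$-flats filling $Z\setminus H_\infty$ yields a contradiction. Thus all $q$ of these subspaces lie in $\cU$, so they are exactly $T_1,\dots,T_q$, and $\langle T_1,\dots,T_q\rangle=Z$ has dimension $k+1$.

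The main obstacle is precisely this last step — excluding that the tail spans a subspace of dimension larger than $k+1$, in particular all of $\PG(n-1,q)$, a possibility that for $q\ge 3$ is not ruled out by dimension counts alone (one only gets $\dim\langle T_1,\dots,T_q\rangle\le k+q-1$). The delicate point is the counting that forbids the ``missing'' $(k-1)$-flats of $Z$ from being covered by higher-dimensional parts reaching into $Z$; an alternative would be a downward induction on $n$ applied to the restricted {\avsp} on the subspace spanned by the tail, once one verifies that this restriction keeps the tail size equal to $q$ and that irreducibility is inherited. The degenerate case $k=1$ (where $\cT'$ is $q$ copies of the empty subspace and gives no information) is handled on its own: $q$ collinear affine points of $\cU$ would constitute the affine part of a line and hence a proper sub-{\avsp}, so the tail points cannot all be collinear; combining Lemma~\ref{lemma_irreducible_no_multiset} on collinear sub-triples with restriction to the plane they span then forces the contradiction, completing the proof that $l=1$ and $k_1=n-1$.
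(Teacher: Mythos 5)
Your overall strategy is the same as the paper's: the corollary is deduced from the lemma immediately preceding it, whose proof is exactly your first two steps --- Lemma~\ref{lemma_divisible_sets_of_k_spaces} makes the multiset $\cT'$ of traces at infinity ($q$-)divisible, hence a $q$-fold $(k_l-1)$-space $W$, and then Lemma~\ref{lemma_irreducible_no_multiset} is applied to the tail. You are also right that this last application needs $\dim\langle T_1,\dots,T_q\rangle=k_l+1$, which is automatic only for $q=2$ (two distinct $k$-spaces through a common $(k-1)$-space span a $(k+1)$-space); the paper states and proves the preceding lemma only for $q=2$, and all its later uses of the corollary are binary, so you have correctly located the point where the written argument does not extend verbatim to $q\ge 3$.

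However, your attempt to supply that step is not a proof, and this is a genuine gap rather than a routine omission. The intermediate claim you need --- that every $k_l$-space of $Z=\langle T_1,T_2\rangle$ through $W$ other than $Z\cap H_\infty$ lies in $\cU$ --- cannot follow from the local accounting inside $Z$ that you sketch and explicitly leave open: a single part $U\in\cU$ of dimension $k_l+1$ or larger with $U\cap Z=S$ covers exactly the $q^{k_l-1}$ affine points of a ``missing'' space $S$, and nothing visible from inside $Z$ is violated; the only constraints available there (parts other than the $T_i$ meet $Z$ in proper subspaces avoiding the affine points of $T_1,T_2$) can tile the remaining parallel flats without contradiction, so any correct argument must use global information about $\cU$, which your proposal does not provide. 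The alternative you mention (induction on the span of the tail) is likewise only named, not carried out. The same holds for your $k_l=1$ case: Lemma~\ref{lemma_divisible_sets_of_k_spaces} does not apply there, what you actually establish is only that the $q$ tail points are not collinear, and ``Lemma~\ref{lemma_irreducible_no_multiset} on collinear sub-triples'' is not an argument (that lemma requires $q$ elements of equal dimension whose span has dimension one larger). In short: for $q=2$ your proof is complete and coincides with the paper's; for general $q$, the generality in which the statement (and your write-up) is cast, the decisive step is announced rather than proved, and the local route you propose cannot close it.
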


\subsection{The structure of the tail for small parameters}
\label{subsec_tail}
If $\#\cT$ is small \rev{and $q=2$}, then we can also characterize the tail \rev{using Lemma~\ref{lemma_divisible_sets_of_k_spaces}}. To this end, let $\cS$ denote a set of $k$-spaces in $\PG(n-1,q)$. The corresponding \emph{spectrum} 
$\left(a_i\right)_{i\in\N_0}$ is given by the numbers $a_i$ of hyperplanes that contain exactly $i$ elements from $\cS$, so that
\begin{equation}  
  \sum_{i=0}^{\#\cS} a_i =\frac{q^n-1}{q-1}.\label{se1}
\end{equation}
The condition that $\cS$ is spanning, i.e.\ $\left\langle S\,:S \in \cS\right\rangle=\PG(n-1,q)$, is equivalent to $a_{\#\cS}=0$. Double-counting the $k$-spaces gives
\begin{equation}
  \sum_{i=0}^{\#\cS} i a_i =\#\cS\cdot \frac{q^{n-k}-1}{q-1}.\label{se2}
\end{equation}
  
\begin{lemma}
  \label{lemma_tail_4}
  Let $\cS$ be a $2$-divisible set of four $k$-spaces in $\PG(n-1,2)$. Then there exists a $(k-1)$-space $B$, a plane $E$, and a line $L\le E$ with 
  $\dim(\langle E,B\rangle)=k+2$, such that $\cS=\left\{ \langle P,B\rangle\,:\, P\in E\backslash L\right\}$.
\end{lemma}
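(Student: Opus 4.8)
The plan is to use the spectrum equations~(\ref{se1}) and~(\ref{se2}) together with $2$-divisibility to pin down the spectrum $(a_i)$, then translate the combinatorial structure back into the geometry. Since $\cS$ is $2$-divisible, every hyperplane contains an even number of the four $k$-spaces, so $a_i=0$ for $i\in\{1,3\}$ and only $a_0,a_2,a_4$ can be nonzero. First I would reduce to the spanning case: let $W=\langle S:S\in\cS\rangle$ and argue that if $\dim(W)<n-1$ then we may work inside $W$, so after replacing $\PG(n-1,2)$ by $W$ we may assume $a_4=0$, i.e.\ $\cS$ is spanning. Then~(\ref{se1}) gives $a_0+a_2=2^n-1$ and~(\ref{se2}) gives $2a_2=4\cdot(2^{n-k}-1)$, so $a_2=2(2^{n-k}-1)=2^{n-k+1}-2$ and $a_0=2^n-2^{n-k+1}+1$.

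Next I would extract the incidence structure of the four spaces. Let $\cS=\{S_1,S_2,S_3,S_4\}$. For each pair $\{i,j\}$ the span $\langle S_i,S_j\rangle$ has dimension $k$, $k+1$, or $k+2$; I would first rule out dimension $k$ (that would force $S_i=S_j$). The key claim is that the four spaces share a common $(k-1)$-space $B$. To see this, I would count: if some pair, say $S_1,S_2$, spans a $(k+1)$-space $U$, then by Lemma~\ref{lemma_characterization_full}/\ref{lemma_characterization_splitted}-type reasoning $\dim(S_1\cap S_2)=k-1$; I would then show every hyperplane through $\langle S_1,S_2\rangle=U$ (there are $2^{n-k-1}-1$ of them) contains both $S_1$ and $S_2$, hence by $2$-divisibility contains neither $S_3$ nor $S_4$, which combined with the value of $a_2$ forces $S_3,S_4$ to also span exactly a $(k+1)$-space and in fact to meet $S_1,S_2$ in the same $(k-1)$-space. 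Working this out carefully yields a common $(k-1)$-space $B\le S_i$ for all $i$, with $\dim\langle S_1,\dots,S_4\rangle=k+2$.

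Once $B$ is identified, pass to the quotient $\PG(n-1,2)/B$: the four $S_i$ become four points $P_1,\dots,P_4$ spanning a plane $E' \cong \PG(2,2)$ in the quotient, and I claim these four points are exactly the complement of a line $L'$ in $E'$ — equivalently, no three of $P_1,\dots,P_4$ are collinear. This is where the $2$-divisibility is used one last time: three collinear $P_i$ would give a hyperplane (the preimage of a hyperplane of the quotient containing that line) meeting $\cS$ in exactly $3$ spaces, contradicting $a_3=0$; and a plane of the quotient (i.e.\ all of $E'$) has $7$ points, $4$ of which are the $P_i$, so their complement is a $3$-point set, which in $\PG(2,2)$ is automatically a line $L'$. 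Pulling $E'$ and $L'$ back through $B$ gives the plane $E$ and line $L\le E$ with $\dim\langle E,B\rangle=k+2$ and $\cS=\{\langle P,B\rangle:P\in E\setminus L\}$, as desired.

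The main obstacle I anticipate is the middle step: rigorously showing the four $k$-spaces share a common $(k-1)$-space rather than, say, pairwise intersecting in a $(k-1)$-space without a global common one, or some pair intersecting only in a $(k-2)$-space. I expect this to require a careful double-count of flags (hyperplane, $S_i$) refined by the pairwise span dimensions, using the exact value $a_2=2^{n-k+1}-2$ to exclude the bad configurations; the identity $a_0+a_2=2^n-1$ leaves little slack, so the counting should close, but getting the case analysis clean is the delicate part.
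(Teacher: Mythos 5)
Your outline reproduces the paper's spectrum computation and its endgame (quotient by a common subspace, then the four-points-off-a-line picture in $\PG(2,2)$), but the central step --- producing the common $(k-1)$-space $B$ --- is not actually proved, and this is a genuine gap rather than a routine verification. Two specific problems: first, your assertion that each pairwise span $\langle S_i,S_j\rangle$ has dimension at most $k+2$ is unjustified; a priori two of the four $k$-spaces could meet in any dimension down to $\max(0,2k-n)$, and the disjoint/low-intersection cases are exactly what must be excluded. Second, the $a_2$-count does not ``close'' on its own. In the spanning case one has the exact identity $a_2=\sum_{i<j}\bigl(2^{\,n-\dim\langle S_i,S_j\rangle}-1\bigr)=2^{n-k+1}-2$; this forces all pairwise spans to be $(k+1)$-spaces only when $n=k+2$, whereas for $n=k+3$ it admits the spurious solution ``four pairs meeting in $(k-1)$-spaces, two pairs meeting in $(k-2)$-spaces,'' which needs a separate geometric argument to rule out. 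Even after all pairwise intersections are known to be $(k-1)$-spaces, the existence of a \emph{common} $(k-1)$-space still requires an argument (it does follow when the span is $(k+2)$-dimensional, e.g.\ dually four pairwise meeting lines are either concurrent or coplanar, and concurrency contradicts spanning --- but none of this is in your sketch). You flag this middle step yourself as the anticipated obstacle, so as written the proof is incomplete. A small additional slip: a $3$-point set in $\PG(2,2)$ is not ``automatically a line'' (it could be a triangle); you need the parity/incidence count in the quotient plane, or the fact that the diagonal points of a quadrangle in $\PG(2,2)$ are collinear.

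For comparison, the paper closes this gap by a different and shorter route: it first shows every point lies in $0$, $1$ or $4$ elements of $\cS$ (parity excludes $3$; for multiplicity $2$, the $2^{n-k-1}$ hyperplanes containing one of the two spaces missing the point but not the point itself must each contain both missing spaces, forcing them to coincide). Hence either some point lies in all four elements --- quotient it out and recurse --- or the four spaces are pairwise disjoint, which the pair-counting identity (the analogue of (\ref{se3})) excludes except in the base case $n=3$, $k=1$, where $B$ is trivial. This recursion yields the common $(k-1)$-space directly and avoids the case analysis over pairwise span dimensions that your plan would have to carry out.
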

\begin{proof}
  Assume that $P$ is a point that is contained in at least one but not all elements from $\cS$. Let $x$ denote the number of elements of $\cS$ that contain $P$. 
  Since all hyperplanes contain an even number of elements from $\cS$ we have $x\neq 3$. Assume $x=2$ for a moment and let $S,S'\in\cS$ be the two elements not containing $P$. 
  There are $2^{n-k-1}$ hyperplanes that contain $S$ but do not contain $P$, so that all of those hyperplanes contain $S$ and $S'$. The intersection of these hyperplanes has 
  dimension at most $k$ and contains $S$ as well as $S'$, so that $S=S'$, which is a contradiction. Thus, each point $P$ in $\PG(n-1,2)$ is contained in $0$, $1$ or $4$ elements of 
  $\cS$. 

  By $\left(a_i\right)_{i\in \N_0}$ we denote the spectrum of $\cS$. W.l.o.g.\ we assume that $\cS$ is spanning, i.e., we have $a_4=0$. From the equations~(\ref{se1}) and (\ref{se2}) 
  we conclude
  $$
    a_0 \,=\, 2^{n}-2^{n-k+1}+1\quad\text{and}\quad
    a_2 \,=\, 2^{n-k+1}-2.
  $$
  If there is no point $P$ that is contained in all four elements of $\cS$, then the elements of $\cS$ are pairwise disjoint and double-counting pairs yields
  \begin{equation}
    \binom{2}{2} a_2 =\binom{4}{2}\cdot \left(2^{n-2k}-1\right),\label{se3}
  \end{equation}
  so that
  $$
    2^{n-k+1}-2 =6\cdot \left(2^{n-2k}-1\right)\quad\Leftrightarrow\quad  2^{n-k}-3\cdot 2^{n-2k}+2=0,
  $$
  which has the unique solution $n=3$, $k=1$.
  
  So, by recursively quotienting out points $P$ that are contained in all elements of $\cS$ we conclude the existence of a $(k-1)$-space $B$ that is contained in all four elements of $\cS$. 
  Quotienting out $B$ yields a spanning $2$-divisible set of points in $\PG(2,2)$ with $a_0=1$ and $a_2=6$. Choosing $E$ as the ambient space and $L$ as the empty hyperplane yields the 
  stated characterization since in $\PG(2,2)$ there are exactly four points outside a hyperplane.
\end{proof}   
If $k=1$, i.e., the $k$-spaces are points, the equations (\ref{se1})-(\ref{se2}) \rev{and the generalization $\sum_{i=0}^{\#\cS} \binom{i}{2} a_i =\binom{\#\cS}{2}\cdot \frac{q^{n-2}-1}{q-1}$ of (\ref{se3})} are also known as {\lq\lq}standard equations{\rq\rq} or the first three MacWilliams equations for the
corresponding \rev{(projective)} linear code.

We remark that Lemma~\ref{lemma_tail_4} is based on the fact that each $2$-divisible set of $4$ points is an affine plane. For $q>2$ there there further possibilities 
for $q$-divisible sets of $q^2$ points over $\F_q$, see \cite{de2019cylinder,kurz2021generalization} on the so-called cylinder conjecture. \rev{We apply Lemma~\ref{lemma_tail_4} e.g.\ in Proposition~\ref{prop_tiavsp_pg_6_2} and Lemma~\ref{lemma_exclusion_6_1_5_4_4_4}.}  

\section{Classification of {\avsp}s in \texorpdfstring{$\mathbf{\PG(n-1,q)}$}{PG(n-1,q)} for small parameters}
\label{sec_classification_avsp}
By definition, there is no {\avsp} in $\PG(1-1,q)$. In $\PG(2-1,q)$ there is a unique {\avsp}. It has type $1^q$ and is irreducible and tight. 

\begin{lemma}
  \label{lemma_intersection_hyperplanes_as_avsp_elements}
  Let $\cU$ be an {\avsp} of $\PG(n-1,q)$, where $n\ge 3$. If there exist pairwise different hyperplanes $U_1,\dots,U_l\in \cU$, then there exists 
  an $(n-2)$-space $K\le H_\infty$ such that $K\le U_i$ for all $1\le i\le l$. 
\end{lemma}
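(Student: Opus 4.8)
The plan is to show that the hyperplanes $U_1,\dots,U_l$ all induce the \emph{same} $(n-2)$-dimensional subspace on $H_\infty$, and then to take $K$ to be that common trace. Once we know $U_i\cap H_\infty = U_j\cap H_\infty$ for all $i,j$, we may set $K\coloneq U_1\cap H_\infty$; this is contained in $H_\infty$, it is contained in every $U_i$, and since $n\ge 3$ it has algebraic dimension $n-2\ge 1$, so it is a genuine subspace. The case $l\le 1$ being trivial, it therefore suffices to prove that any two distinct hyperplanes $U,U'\in\cU$ satisfy $U\cap H_\infty = U'\cap H_\infty$.

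So fix two distinct hyperplanes $U,U'\in\cU$. First I would record that, by the defining property of an {\avsp}, $U\not\le H_\infty$ and $U'\not\le H_\infty$; since $U$ and $U'$ are hyperplanes, this means $U\cap H_\infty$ and $U'\cap H_\infty$ are both $(n-2)$-spaces. Next, apply Lemma~\ref{lemma_dimension_condition} to the pair $U,U'$: it gives
\[
  \dim(U\cap U') \;=\; \dim(U\cap U'\cap H_\infty) \;\ge\; \dim(U)+\dim(U')-n \;=\; 2(n-1)-n \;=\; n-2.
\]
On the other hand $U\ne U'$ are distinct hyperplanes, so $\dim(U\cap U')\le n-2$. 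Hence equality holds throughout, and in particular $\dim(U\cap U'\cap H_\infty)=n-2$.

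Now $U\cap U'\cap H_\infty$ is a subspace of $U\cap H_\infty$, and both have dimension $n-2$, which forces $U\cap U'\cap H_\infty = U\cap H_\infty$. By the symmetric argument $U\cap U'\cap H_\infty = U'\cap H_\infty$ as well, and therefore $U\cap H_\infty = U'\cap H_\infty$, as desired. Applying this to all pairs among $U_1,\dots,U_l$ and setting $K\coloneq U_1\cap H_\infty$ completes the proof.

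I do not anticipate a real obstacle here; the only points needing a little care are the two ``a subspace of dimension $n-2$ inside a subspace of dimension $n-2$ must coincide with it'' steps, and the bookkeeping that $U_i\not\le H_\infty$ is precisely what makes $U_i\cap H_\infty$ an $(n-2)$-space. An alternative to invoking Lemma~\ref{lemma_dimension_condition} is a direct double count: if $U\cap H_\infty\ne U'\cap H_\infty$, then these two distinct $(n-2)$-spaces span $H_\infty$ and meet in dimension $n-3$, so $U\cap U'$ would properly contain $U\cap U'\cap H_\infty$ and hence contain a point outside $H_\infty$, contradicting the disjointness of the affine parts $U\setminus H_\infty$ and $U'\setminus H_\infty$; but the route through Lemma~\ref{lemma_dimension_condition} is shorter.
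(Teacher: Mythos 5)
Your proof is correct and follows essentially the same route as the paper: both arguments show that any two distinct hyperplane elements of $\cU$ have the same trace at infinity, because their $(n-2)$-dimensional intersection must lie in $H_\infty$ (disjointness of the affine parts) and hence coincides with each trace, and then take $K=U_1\cap H_\infty$. The only cosmetic difference is that you package the disjointness step via Lemma~\ref{lemma_dimension_condition}, whereas the paper states it directly.
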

\begin{proof}
  The statement is trivial for $l\le 1$, so that we assume $l\ge 2$. Due to the dimensions we have $\dim(U_i\cap U_j)=n-2$ for all $1\le i<j\le l$. Since 
  the sets of points $U_i\backslash H_\infty$ and $U_j\backslash H_\infty$ are disjoint we have $U_i\cap U_j\le H_\infty$ and $U_i\cap U_j=U_i\cap H_\infty=U_j\cap H_\infty$. 
  So, we set $K=U_1\cap H_\infty$.  
\end{proof}

\begin{proposition}
  Let $\cU$ be an irreducible {\avsp} of $\PG(n-1,q)$, where $n\ge 3$. If $\cU$ is of type $(n-1)^{m_{n-1}}\dots 2^{m_2}1^{m_1}$, then we have 
  $m_{n-1}\le q-2$ or $m_{n-1}=q$. In the latter case $\cU$ is not tight.
\end{proposition}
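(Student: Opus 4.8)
The plan is to treat the two claims separately, both times invoking Lemma~\ref{lemma_intersection_hyperplanes_as_avsp_elements}: if $\cU$ contains $l\ge 1$ hyperplanes, then all of them pass through a common $(n-2)$-space $K\le H_\infty$, hence they are among the $q$ hyperplanes through $K$ distinct from $H_\infty$; in particular $m_{n-1}\le q$. It then remains to rule out $m_{n-1}=q-1$ and to show that $m_{n-1}=q$ forces non-tightness.

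First I would dispose of $m_{n-1}=q$. Then $m_{n-1}\cdot q^{n-2}=q^{n-1}$ already exhausts the left-hand side of the packing condition~(\ref{eq_packing_condition}), so $\cU$ consists precisely of the $q$ hyperplanes through $K$ other than $H_\infty$. Any two distinct such hyperplanes meet exactly in $K$, so $\bigcap_{U\in\cU}U=K$, which contains a point because $\dim(K)=n-2\ge 1$; hence $\cU$ is not tight.

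For the remaining assertion, suppose towards a contradiction that $m_{n-1}=q-1$, let $U_1,\dots,U_{q-1}\in\cU$ be the hyperplanes (all through the common $(n-2)$-space $K\le H_\infty$), and let $H^{\star}$ be the unique remaining hyperplane through $K$ different from $H_\infty$. Each affine point $P$ lies on the hyperplane $\langle K,P\rangle$ (a hyperplane since $P\notin K$), which is one of $U_1,\dots,U_{q-1},H^{\star}$; so the affine parts of $U_1,\dots,U_{q-1}$ cover exactly the affine points off $H^{\star}$, and the affine points on $H^{\star}$ (there are $q^{n-2}$ of them) must be covered by the remaining elements of $\cU$. For any $U\in\cU$ of dimension at most $n-2$, none of its affine points lies in any $U_i$, so all of them lie on $H^{\star}$; since the affine points of $U$ span $U$ (as $U\not\le H_\infty$) and $H^{\star}$ is a subspace, $U\le H^{\star}$. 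Thus $\cU':=\cU\setminus\{U_1,\dots,U_{q-1}\}$ is an {\avsp} of $H^{\star}\cong\PG(n-2,q)$ with hyperplane at infinity $K=H^{\star}\cap H_\infty$. Because each element of $\cU'$ covers at most $q^{n-3}$ of the $q^{n-2}$ affine points of $H^{\star}$, we get $\#\cU'\ge q\ge 2$, and $\cU'\subsetneq\cU$ since $q-1\ge 1$. Hence $\cU$ is reducible, contradicting the hypothesis, so $m_{n-1}\ne q-1$; together with $m_{n-1}\le q$ this yields $m_{n-1}\le q-2$ or $m_{n-1}=q$.

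The only step that is not routine bookkeeping with~(\ref{eq_packing_condition}) and Lemma~\ref{lemma_intersection_hyperplanes_as_avsp_elements} is this reduction: once the $q-1$ hyperplanes are used up, one must recognise that the remaining parts are confined to the single leftover hyperplane $H^{\star}$ and form an {\avsp} of it, and then check that this sub-{\avsp} has at least two elements, which is exactly what lets reducibility be invoked.
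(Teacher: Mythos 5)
Your proposal is correct and follows essentially the same route as the paper: invoking Lemma~\ref{lemma_intersection_hyperplanes_as_avsp_elements} to get the common $(n-2)$-space $K$, showing that for $m_{n-1}=q-1$ the remaining elements form an {\avsp} of the leftover hyperplane through $K$ (hence reducibility), and that for $m_{n-1}=q$ the common space $K$ lies in every element, contradicting tightness. You merely spell out a few details the paper leaves implicit, such as why the residual elements are confined to $H^{\star}$ and why the sub-{\avsp} has at least two elements.
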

\begin{proof}
  We assume $m_{n-1}= q-1\ge 1$ and let $K\le H_\infty$ as in Lemma~\ref{lemma_intersection_hyperplanes_as_avsp_elements}. With this, let $H\neq H_\infty$ be the unique hyperplane 
  with $K\le H$ that is not contained as an element in $\cU$ and $\cU'$ arise from $\cU$ by removing the $q-1$ $(n-1)$-dimensional elements. Thus, $\cU'$ is an {\avsp} of $H$, i.e., $\cU$ is reducible.
  
  If $m_{n-1}=q$, then the $(n-2)$-space $K\le H_\infty$ (as in Lemma~\ref{lemma_intersection_hyperplanes_as_avsp_elements}) is contained in all elements of $\cU$, i.e., 
  $\cU$ is not tight.    
\end{proof}

\begin{corollary}
  \label{cor_m_n_minus_1_zero}
  Let $\cU$ be an irreducible tight {\avsp} of $\PG(n-1,2)$ of type $(n-1)^{m_{n-1}}\dots 1^{m_1}$, where $n\ge 3$. Then we have $m_{n-1}=0$.
\end{corollary}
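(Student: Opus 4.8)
The plan is to obtain this as an immediate specialization of the preceding Proposition to the case $q=2$. Applying that Proposition to $\cU$ --- which by hypothesis is an irreducible {\avsp} of $\PG(n-1,2)$ with $n\ge 3$ --- yields that either $m_{n-1}\le q-2=0$ or $m_{n-1}=q=2$, and in the latter situation $\cU$ fails to be tight. Since $\cU$ is assumed tight, the possibility $m_{n-1}=2$ is ruled out. Hence $m_{n-1}\le 0$, and as $m_{n-1}$ is a nonnegative integer we conclude $m_{n-1}=0$.

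There is no real obstacle here: the only things to check are that the hypotheses of the Proposition hold, namely irreducibility of $\cU$ and $n\ge 3$ (both part of the statement), and that $q=2$ so that the bound $q-2$ becomes $0$. For completeness one can also argue directly in the binary case rather than invoking the Proposition. If $m_{n-1}=1$, let $U_1\in\cU$ be the unique hyperplane and set $K=U_1\cap H_\infty$, an $(n-2)$-space of $H_\infty$ (this is where Lemma~\ref{lemma_intersection_hyperplanes_as_avsp_elements} locates $K$); the unique hyperplane $H$ with $K\le H$ and $H\notin\{H_\infty,U_1\}$ then has the property that $\cU\setminus\{U_1\}$ partitions $H\setminus H_\infty$, exhibiting $\cU$ as reducible. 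If $m_{n-1}=2$, the two hyperplanes already exhaust the $2^{n-1}$ points outside $H_\infty$, forcing $\cU=\{U_1,U_2\}$; then $K\le U_1\cap U_2$ shows the common intersection of all elements of $\cU$ is nontrivial, contradicting tightness. In either case only $m_{n-1}=0$ survives.
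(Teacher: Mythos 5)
Your proof is correct and matches the paper's intent: the corollary is stated without proof precisely because it is the specialization of the preceding Proposition to $q=2$, which is exactly your first paragraph. Your supplementary direct argument is also fine, but it is essentially the Proposition's own proof (the reducibility argument for $m_{n-1}=q-1=1$ and the non-tightness argument for $m_{n-1}=q=2$) rewritten in the binary case, so it does not constitute a different route.
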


Let $\cU=\left\{U_1,\dots,U_r\right\}$ be an {\avsp} of $\PG(n-1,q)$, $I\subseteq\{1,\dots,r\}$, and $V$ be a proper subspace with $V\not\le H_\infty$. 
If $\# I\ge 2$ and $\left\{U_i\,:\, i\in I\right\}$ is an {\avsp} of $V$, then we say that the spaces $U_i$ with $i\in I$ can be \emph{joined} to
$V$. Note that this is exactly the situation when $\cU$ is reducible. In $\PG(n-1,2)$ any two points outside of $H_\infty$ can be joined to a line, so that:   
\begin{lemma}
  \label{lemma_m_1_zero}
  Let $\cU$ be an irreducible tight {\avsp} of $\PG(n-1,2)$ of type $(n-1)^{m_{n-1}}\dots 1^{m_1}$, where $n\ge 3$. Then,we have $m_{1}=0$.
\end{lemma}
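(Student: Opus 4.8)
The plan is to combine a parity observation coming from the packing condition~(\ref{eq_packing_condition}) with the ``joining'' argument sketched immediately before the statement; tightness will in fact not be needed, only irreducibility and $n\ge 3$.

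\emph{First step (parity).} I would read off from (\ref{eq_packing_condition}) that $\sum_{i=1}^{n-1} m_i\cdot 2^{i-1}=2^{n-1}$. For $n\ge 3$ the right-hand side is even, and so is every summand with $i\ge 2$; hence $m_1=m_1\cdot 2^0$ is even. In particular $m_1\neq 1$, so it remains only to exclude $m_1\ge 2$.

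\emph{Second step (a line as a reducing subspace).} Assume $m_1\ge 2$ and fix two distinct point-elements $P_1,P_2\in\cU$; put $L\coloneq\langle P_1,P_2\rangle$, a line, so $\dim(L)=2$. Since $P_1\notin H_\infty$ and $H_\infty$ is a hyperplane, $L\not\le H_\infty$, whence $L\cap H_\infty$ is a single point; as $L$ carries exactly $3$ points over $\F_2$, this forces $L\setminus H_\infty=\{P_1,P_2\}$. Thus $\{P_1,P_2\}$ is an {\avsp} of $L$, with hyperplane at infinity $L\cap H_\infty$. Because $n\ge 3$ we have $\dim(L)=2<n$, and $\{P_1,P_2\}$ is a proper subset of $\cU$ (indeed $\cU$ covers $2^{n-1}\ge 4$ affine points, so $\#\cU\ge 3$). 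Hence the elements $P_1,P_2$ can be joined to $L$, i.e.\ $\cU$ is reducible, contradicting the hypothesis. Therefore $m_1=0$.

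\emph{Where the care is needed.} There is little that can go wrong, and I would not expect a genuine obstacle. The two points that deserve attention are: the value $m_1=1$, which the joining argument cannot address and which is instead ruled out by the parity remark of the first step; and the verification that $L$ is a \emph{proper} subspace \emph{not} contained in $H_\infty$, which is exactly where the hypothesis $n\ge 3$ enters — for $n=2$ the unique {\avsp} of $\PG(1,2)$ has type $1^2$ and is both irreducible and tight, so the statement genuinely fails there.
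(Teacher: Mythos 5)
Your argument is correct and takes essentially the same route as the paper: the paper derives the lemma precisely from the observation that any two point-elements outside $H_\infty$ can be joined to a line, so $m_1\ge 2$ contradicts irreducibility (tightness is indeed not used). Your first step, ruling out $m_1=1$ by the parity forced by the packing condition, is left implicit in the paper, and making it explicit (together with checking that the line is a proper subspace not inside $H_\infty$ and that the two points form a proper subset of $\cU$) is a sound completion of the same argument.
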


\begin{theorem}
  Let $\cU$ be an {\avsp} of $\PG(3-1,q)$ with type $2^{m_2} 1^{m_1}$. Then, we have $0\le m_2\le q$, $m_1=q\cdot\left(q-m_2\right)$, all 
  lines in $\cU$ contain a common point $P\le H_\infty$, and the $1$-dimensional elements can be grouped into pairwise disjoint sets of size $q$ that can be 
  joint to a line each.
\end{theorem}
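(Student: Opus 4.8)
The approach is to extract the numerical identities from the packing condition, then pin down the common point at infinity using the fact that two distinct lines of a projective plane meet, and finally read off the grouping of the isolated points from the pencil of affine lines through that point.

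First I would specialise the packing condition~(\ref{eq_packing_condition}) to $n=3$, which reads $m_1+q\,m_2=q^2$, i.e.\ $m_1=q(q-m_2)$. As $m_1\ge 0$ this forces $m_2\le q$, while $m_2\ge 0$ is immediate from the definition of the type; hence $0\le m_2\le q$ and $m_1=q(q-m_2)$, as claimed.

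Next I would establish that the lines of $\cU$ share a point of $H_\infty$. If $m_2\le 1$ there is nothing to prove, so let $U,U'\in\cU$ be distinct lines; in $\PG(2,q)$ they meet in a single point $Q$. If $Q$ were affine, it would lie in both $U\setminus H_\infty$ and $U'\setminus H_\infty$, contradicting that $\cU$ partitions the affine points (equivalently, Lemma~\ref{lemma_dimension_condition} gives $\dim(U\cap U')=\dim(U\cap U'\cap H_\infty)\ge 1$). Thus $Q\le H_\infty$, and since $\dim(U\cap H_\infty)=\dim(U)-1=1$ we get $Q=U\cap H_\infty=U'\cap H_\infty$; this point is independent of the chosen pair, so all lines of $\cU$ pass through a common $P\le H_\infty$. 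For $m_2\in\{0,1\}$ I would simply fix an arbitrary point $P\le H_\infty$ (taking $P=U_1\cap H_\infty$ when $m_2=1$).

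Finally I would describe the grouping. The lines of $\PG(2,q)$ through $P$ other than $H_\infty$ are $q$ lines $L_1,\dots,L_q$, and their affine parts partition the $q^2$ affine points into $q$ classes of size $q$. Every line of $\cU$ passes through $P$, hence equals some $L_j$; after reindexing, the lines of $\cU$ are $L_1,\dots,L_{m_2}$, and they cover exactly the affine points on $L_1\cup\dots\cup L_{m_2}$. Since $\cU$ is a partition, its remaining $m_1$ elements --- which are points --- are precisely the affine points lying on $L_{m_2+1},\dots,L_q$. For each $m_2<j\le q$, the $q$ such points on $L_j$ form the full affine point set of $L_j$ and can therefore be joined to the line $L_j$; the resulting $q-m_2$ blocks are pairwise disjoint and exhaust all $m_1=q(q-m_2)$ points, which is the asserted structure. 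The only point requiring care is the bookkeeping for $m_2\in\{0,1\}$, where $P$ is not forced and must be chosen by hand; I do not expect any deeper obstacle.
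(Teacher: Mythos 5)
Your proposal is correct and follows essentially the same route as the paper: the packing condition gives $m_1=q(q-m_2)$ and $0\le m_2\le q$, the common point at infinity comes from the observation that two affinely disjoint lines must meet on $H_\infty$ (which is exactly the content of Lemma~\ref{lemma_intersection_hyperplanes_as_avsp_elements}, cited by the paper and re-derived inline by you, with $P$ chosen arbitrarily when $m_2=0$), and the grouping of the $1$-dimensional elements is read off the pencil of the $q$ affine lines $L_1,\dots,L_q$ through $P$.
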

\begin{proof}
  \rev{The parameterization of $m_2,m_1$ follows from the 
  packing condition~(\ref{eq_packing_condition}). If $m_2>0$, the}
  existence of $P$ follows from Lemma~\ref{lemma_intersection_hyperplanes_as_avsp_elements}. If $m_2=0$ then choose an arbitrary point $P\le H_\infty$. By $L_1,\dots, L_q$ we denote the $q$ lines 
  containing $P$ that are not equal to $H_\infty$. For each line $L_i$ that is not an element of $\cU$ there exist $q$ points in $\cU$ that can be joined to 
  $L_i$. (Note that $L_i\cap L_j=P$ for all $1\le i<j\le q$.) 
\end{proof}
We remark that all possibilities for $0\le m_2\le q$ can indeed by attained. In general there exist several non-isomorphic examples.

\begin{corollary}$\,$\\[-5mm]
  \begin{enumerate}
  \item[(1)] Let $\cU$ be an irreducible {\avsp} of $\PG(3-1,q)$. Then $\cU$ is of type $2^q$ and non-tight.\\[-4mm]
  \item[(2)] Let $\cU$ be an {\avsp} of $\PG(3-1,q)$ with type $2^{m_2} 1^{m_1}$. Then, $\cU$ is tight iff $m_1\ge 1$. In that case $\cU$ is reducible.\\[-4.5mm]
  \item[(3)] No irreducible tight {\avsp} of $\PG(3-1,q)$ exists.
  \end{enumerate}
\end{corollary}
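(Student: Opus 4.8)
The plan is to read off all three statements from the Theorem just proved, which already fixes the coarse structure of an {\avsp} $\cU$ of $\PG(3-1,q)$: its type is $2^{m_2}1^{m_1}$ with $0\le m_2\le q$ and $m_1=q(q-m_2)$, every line of $\cU$ passes through a fixed point $P\le H_\infty$, and the $1$-spaces of $\cU$ partition into pairwise disjoint blocks of size $q$, each block being an {\avsp} of a line.

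For part~(1), I would assume $\cU$ is irreducible. If $m_1\ge 1$ (equivalently $m_2\le q-1$), pick one of the blocks $G$ of $q$ of the $1$-spaces; then $\{U_i : i\in G\}$ is an {\avsp} of a line, a subspace of dimension $2<3$, and since $m_2\le q-1$ the partition $\cU$ contains at least one element outside $G$ (another block, or one of the $m_2\ge 1$ lines), so $G$ is a \emph{proper} subset with $\#G=q\ge 2>1$ --- contradicting irreducibility. Hence $m_1=0$, which forces $m_2=q$ and type $2^q$; the $q$ lines all contain the common point $P\le H_\infty$, so $P$ lies in $\bigcap_{U\in\cU}U$ and $\cU$ is not tight.

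For part~(2): if $m_1=0$ then, as in~(1), the $q$ lines share the point $P\le H_\infty$, so $\cU$ is not tight; conversely, if $m_1\ge 1$ then $m_1=q(q-m_2)\ge q\ge 2$ gives two distinct $1$-spaces $P_0\ne P_1$ in $\cU$, and two distinct points of $\PG(3-1,q)$ meet trivially, so $\bigcap_{U\in\cU}U$ is trivial and $\cU$ is tight. The reducibility assertion in~(2) is exactly the block argument from part~(1): when $m_1\ge 1$, a size-$q$ block is a proper subset of $\cU$ forming an {\avsp} of a line. Part~(3) is then immediate: by~(1) every irreducible {\avsp} of $\PG(3-1,q)$ is non-tight (equivalently, by~(2) every tight one is reducible), so none can be both. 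The only point requiring a moment's care --- and it is a purely arithmetic one rather than a genuine obstacle --- is verifying that a size-$q$ block really is a proper subset of $\cU$ whenever $m_1\ge 1$, which follows at once from $m_1=q(q-m_2)$ together with $q\ge 2$.
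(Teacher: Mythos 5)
Your proposal is correct and follows exactly the intended route: the paper states this as an immediate corollary of the preceding theorem (type $2^{m_2}1^{m_1}$ with $m_1=q(q-m_2)$, common point $P\le H_\infty$ on all lines, and the size-$q$ blocks of points joinable to lines), and you derive all three parts from precisely these facts, including the minor check that a block is a proper subset of $\cU$ when $m_1\ge 1$.
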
  

Let $\cU$ be an {\avsp} of $\PG(n-1,q)$, where $n\ge 3$, and $K\le H_\infty$ be an arbitrary $(n-2)$-space. We say that 
$\cU^{(1)},\dots,\cU^{(q)}$ is a \emph{$K$-decomposition} of $\cU$ if the $q$ hyperplanes containing $K$ and not being equal to $H_\infty$ can be 
labeled as $H_1,\dots,H_q$ such that \rev{$\cU=\cup_{i=1}^q \cU^{(i)}$ and } 
\begin{equation}
  \cU^{(i)}=\left\{ U\cap H_i \,:\, U\in \cU, U\cap H_i\not\le H_\infty \right\}
\end{equation}  
for all $1\le i\le q$. Note that $\cU^{(i)}$ is an {\avsp} of $H_i$ for each $1\le i\le q$ (including the case $\cU^{(i)}=\left\{ H_i\right\}$). Moreover, any labeling of the $q$ 
hyperplanes $H_i$ induces a $K$-decomposition. Observe that for a fixed $(n-2)$-space $K\le H_\infty$ each pair of $K$-decompositions arises just by 
relabeling, so that we also speak of \emph{the} $K$-decomposition of $\cU$ since the actual labeling will not matter in our context.

\begin{proposition}
  \label{prop_at_least_one_hyperplane}
  Let $\cU$ be an {\avsp} of $\PG(n-1,q)$, where $n\ge 3$, with type $(n-1)^{m_{n-1}}\dots 2^{m_2}1^{m_1}$. If $1\le m_{n-1}\le q$, then 
  there exists an $(n-2)$-space $K\le H_\infty$ such that the $K$-decomposition $\cU^{(1)},\dots,\cU^{(q)}$ partitions $\cU$, i.e.,
  \[
    \bigcup_{1\le i\le q} \cU^{(i)}=\cU.
  \]
  Moreover, if $m_{n-1}\le q-1$, then $\cU$ is reducible. (More precisely, for each index $1\le i\le q$ with $\#\cU^{(i)}>1$ the elements in $\cU^{(i)}$ 
  can be joined to $H_i$.)
\end{proposition}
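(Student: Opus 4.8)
The plan is to construct $K$ directly from a hyperplane element of $\cU$ and then use Lemmas~\ref{lemma_characterization_full} and~\ref{lemma_characterization_splitted} to show that, relative to this $K$, no element of $\cU$ falls under Lemma~\ref{lemma_characterization_splitted}. First, since $m_{n-1}\ge 1$, I fix a hyperplane $U_1\in\cU$ and set $K\coloneq U_1\cap H_\infty$, which is an $(n-2)$-space of $H_\infty$ because $U_1\neq H_\infty$. By Lemma~\ref{lemma_intersection_hyperplanes_as_avsp_elements}, applied to the hyperplane elements of $\cU$ (one of which is $U_1$), all of them contain a common $(n-2)$-space of $H_\infty$, which must be $K$ (it is contained in $U_1\cap H_\infty$ and has dimension $n-2$); hence every hyperplane belonging to $\cU$ is one of the $q$ hyperplanes $H_1,\dots,H_q$ through $K$ other than $H_\infty$. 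Relabel so that $U_1=H_1$.

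The crux is to show that $W\cap H_\infty\le K$ for every $W\in\cU$, equivalently that no $W\in\cU$ satisfies the conditions of Lemma~\ref{lemma_characterization_splitted}. Suppose for contradiction that $W\in\cU$ has $W\cap H_\infty\not\le K$. Then $W\neq U_1$, since $U_1\cap H_\infty=K$. By item~(2) of Lemma~\ref{lemma_characterization_splitted} we have $\dim(W\cap H_1)=\dim(W)-1=\dim(W\cap H_\infty)$; if $W\cap H_1$ were contained in $H_\infty$, it would coincide with $W\cap H_\infty$ and therefore lie in $H_1\cap H_\infty=K$, contradicting $W\cap H_\infty\not\le K$. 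So $W\cap H_1=W\cap U_1$ is not contained in $H_\infty$ and hence contains a point $P$ outside $H_\infty$. But then $P$ lies in the two distinct elements $W$ and $U_1$ of $\cU$, contradicting the defining property of an {\avsp}. Hence every $W\in\cU$ is of the type described in Lemma~\ref{lemma_characterization_full}.

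With this in hand, item~(3) of Lemma~\ref{lemma_characterization_full} tells us that each $W\in\cU$ lies in a unique $H_i$ and satisfies $W\cap H_j=W\cap H_\infty\le H_\infty$ for all $j\neq i$. So $W$ contributes exactly the element $W$ to $\cU^{(i)}$ and nothing to $\cU^{(j)}$ for $j\neq i$; consequently $\cU^{(i)}=\{\,W\in\cU:\ W\le H_i\,\}\subseteq\cU$ for each $i$, and since each $W\in\cU$ occurs in exactly one of these sets, $\bigcup_{i=1}^{q}\cU^{(i)}=\cU$, which is the first claim. For the second claim, suppose $m_{n-1}\le q-1$. The hyperplane elements of $\cU$ all occur among $H_1,\dots,H_q$, and there are only $m_{n-1}<q$ of them, so some $H_{i_0}$ is not an element of $\cU$; then $\cU^{(i_0)}$ contains no hyperplane and is therefore an {\avsp} of $H_{i_0}$ consisting of proper subspaces of $H_{i_0}$ (every point of $H_{i_0}$ outside $H_\infty$ lies in a unique element of $\cU$, which is then $\le H_{i_0}$ but unequal to $H_{i_0}$). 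It has at least two elements, because a single proper subspace of $H_{i_0}$ covers at most $q^{n-3}$ of the $q^{n-2}$ points of $H_{i_0}$ outside $H_\infty$; and $\cU^{(i_0)}\subsetneq\cU$, since the $q^{n-1}$ points outside $H_\infty$ are not all contained in $H_{i_0}$. Taking $U\coloneq H_{i_0}$ (with $\dim(U)=n-1<n$) and $S$ the index set of $\cU^{(i_0)}$ then exhibits $\cU$ as reducible, and the parenthetical refinement is the same conclusion applied to every $H_i$ with $\#\cU^{(i)}>1$.

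I expect the middle paragraph to be the only real obstacle; the rest is bookkeeping with Lemmas~\ref{lemma_characterization_full} and~\ref{lemma_characterization_splitted}. Its point is that a single hyperplane element $U_1$ already pins down $K=U_1\cap H_\infty$, and then the fact that $\cU$ genuinely partitions the affine points prevents any element of $\cU$ from being split across the pencil of hyperplanes through $K$ --- such a split would force some affine point to lie in both that element and $U_1$.
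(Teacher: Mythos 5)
Your proof is correct and follows essentially the same route as the paper: fix a hyperplane element $U_1\in\cU$, set $K=U_1\cap H_\infty$, and use Lemmas~\ref{lemma_characterization_full} and~\ref{lemma_characterization_splitted} to rule out any element splitting across the pencil through $K$. The only (cosmetic) difference is the final contradiction: you exhibit an affine point covered by both $W$ and $U_1$ directly, whereas the paper phrases the same obstruction via the packing condition in the hyperplane $H_1$ whose decomposition piece is the singleton $\{U_1\}$.
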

\begin{proof}
  Choose some arbitrary $U\in \cU$ with $\dim(U)=n-1$, set $K\coloneq U\cap H_\infty$, and let $\cU^{(1)},\dots,\cU^{(q)}$ be the $K$-decomposition of $\cU$ 
  and $H_1,\dots,H_q$ be the corresponding hyperplanes. Due to Lemma~\ref{lemma_intersection_hyperplanes_as_avsp_elements} each $U\in \cU$ with $\dim(U)=n-1$ 
  results in the same $(n-2)$-space $K$ and the same $K$-decomposition $\cU^{(1)},\dots,\cU^{(q)}$ (up to relabeling). Especially we have that for each 
  $U'\in\cU$ with $\dim(U')=n-1$ there exists an index $1\le i\le q$ with $\cU^{(i)}=\left\{ U'\right\}$. W.l.o.g.\ we assume $\#\cU^{(1)}=1$. 
  
  Now consider an element $U\in\cU$ with $\dim(U)<n-1$. From Lemma~\ref{lemma_characterization_splitted} we conclude $U\cap H_\infty\le K$ since 
  otherwise $\#\cU^{(1)}>1$ (more precisely, $U$ would split into $q$ $(\dim(U)-1)$-spaces where one of these would be contained in $\cU^{(1)}$ that
  also contains an entire hyperplane, which contradicts the packing condition~(\ref{eq_packing_condition})), which would contradict our assumption. 
  Thus, for each $U\in \cU$ there exists exactly one index $1\le i\le q$ with $U\in \cU^{(i)}$ and for each index $1\le j\le q$ either $U\in H_j$ or 
  $U\cap H_j\le K\le H_\infty$.      
\end{proof}                
 
\begin{corollary} 
  Let $\cU$ be an {\avsp} of $\PG(n-1,q)$ with type $(n-1)^{m_{n-1}}\dots 2^{m_2}1^{m_1}$, where $n\ge 3$. 
  If $\cU$ is irreducible, then we have $m_{n-1}\rev{\in\{0,q\}}$.
\end{corollary}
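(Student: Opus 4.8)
The plan is to read the statement straight off Proposition~\ref{prop_at_least_one_hyperplane}, which already contains all the substantive work; only one preliminary bound is needed. First I would observe that $m_{n-1}\le q$ holds for every {\avsp} of $\PG(n-1,q)$: if $\cU$ contains pairwise distinct hyperplanes $U_1,\dots,U_{m_{n-1}}$, then Lemma~\ref{lemma_intersection_hyperplanes_as_avsp_elements} provides a fixed $(n-2)$-space $K\le H_\infty$ with $K\le U_i$ for all $i$. There are exactly $q$ hyperplanes through $K$ that differ from $H_\infty$, and none of the $U_i$ can equal $H_\infty$ since an {\avsp} element is never contained in $H_\infty$; hence $m_{n-1}\le q$.

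Next I would handle the nontrivial case. Assume $\cU$ is irreducible and $m_{n-1}\ge 1$. By the previous paragraph we then have $1\le m_{n-1}\le q$, so Proposition~\ref{prop_at_least_one_hyperplane} applies: there is an $(n-2)$-space $K\le H_\infty$ whose $K$-decomposition partitions $\cU$, and—crucially for us—if in addition $m_{n-1}\le q-1$ then $\cU$ is reducible. Since $\cU$ is assumed irreducible, $m_{n-1}\le q-1$ is impossible, so $m_{n-1}=q$. Together with the trivial possibility $m_{n-1}=0$ this yields $m_{n-1}\in\{0,q\}$, which is the claim.

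I do not expect any real obstacle here, because every step is an immediate invocation of an already-established result; the genuine difficulty was concentrated in Proposition~\ref{prop_at_least_one_hyperplane} itself, in particular the structural analysis via the $K$-decomposition showing (through Lemma~\ref{lemma_characterization_splitted}) that every element of dimension $<n-1$ meets $H_\infty$ inside $K$ and can therefore be joined to one of the $q$ hyperplanes $H_i$ whenever that hyperplane is not already a member of $\cU$. One could instead write a self-contained proof of the corollary, but it would merely transcribe that argument; the economical route is the two-line deduction above, so that is the one I would record.
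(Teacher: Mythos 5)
Your proof is correct and is exactly the deduction the paper intends: the corollary is stated as an immediate consequence of Proposition~\ref{prop_at_least_one_hyperplane}, with $m_{n-1}\le q$ coming from the common $(n-2)$-space of Lemma~\ref{lemma_intersection_hyperplanes_as_avsp_elements} (or, even more directly, from the packing condition), and the reducibility clause of the proposition ruling out $1\le m_{n-1}\le q-1$.
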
  
    
 Affine vector space partitions of $\PG(4-1,q)$ that contain at least one hyperplane as an element can be characterized easily.   
\begin{proposition} 
  Let $\cU$ be an {\avsp} of $\PG(4-1,q)$ of type $3^{m_3}2^{m_2}1^{m_1}$ with $m_3\ge 1$. Then, we have 
  $1\le m_3\le q$, $0\le m_2 \rev{\le} q\cdot\left(q-m_3\right)$, and $m_1=q^3-q^2m_3-qm_2$. Moreover, there exists an $(n-2)$-space $K\le H_\infty$ 
  such that the $K$-decomposition $\cU^{(1)},\dots,\cU^{(q)}$ partitions $\cU$, so that $\cU$ especially is reducible if $m_{n-1}\le q-1$.
\end{proposition}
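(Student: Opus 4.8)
The plan is to separate the statement into its numerical content about $(m_3,m_2,m_1)$ and its structural content about the $K$-decomposition; both are short consequences of results already available in the excerpt.

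First I would read off everything numerical from the packing condition~(\ref{eq_packing_condition}), which for $n=4$ reads $m_3q^2+m_2q+m_1=q^3$. Since the exponents of a type count subspaces, $m_1,m_2,m_3\ge 0$, and $m_3\ge 1$ by hypothesis. The formula $m_1=q^3-q^2m_3-qm_2$ is immediate. Discarding the non-negative summand $m_2q+m_1$ gives $m_3q^2\le q^3$, hence $1\le m_3\le q$. Discarding only $m_1\ge 0$ gives $m_3q^2+m_2q\le q^3$, i.e.\ $m_2\le q(q-m_3)$; together with $m_2\ge 0$ this is the asserted range. (In particular the upper bound on $m_2$ is nothing but the inequality $m_1\ge 0$ rewritten.)

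For the structural part, we have just shown $1\le m_{n-1}=m_3\le q$, so Proposition~\ref{prop_at_least_one_hyperplane} applies verbatim with $n=4$: it produces an $(n-2)$-space $K\le H_\infty$ (a plane of $H_\infty\cong\PG(2,q)$) whose $K$-decomposition $\cU^{(1)},\dots,\cU^{(q)}$ partitions $\cU$, and it asserts that $\cU$ is reducible as soon as $m_{n-1}\le q-1$. That is exactly the final clause, so the proof is complete.

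I do not expect any genuine obstacle here: the proposition is a repackaging of the packing condition, non-negativity of a type, and Proposition~\ref{prop_at_least_one_hyperplane}. The only point worth recording is that the bound $m_2\le q(q-m_3)$ also has a geometric reading consistent with the decomposition: of the $q$ parts $\cU^{(i)}$ exactly $m_3$ are singletons $\{H_i\}$, each of the other $q-m_3$ parts is an {\avsp} of $H_i\cong\PG(2,q)$ and hence has at most $q$ lines by the classification of {\avsp}s of $\PG(3-1,q)$ proved above, and every line of $\cU$ survives intact in exactly one of those parts by the analysis inside the proof of Proposition~\ref{prop_at_least_one_hyperplane}; summing over $i$ yields $m_2\le q(q-m_3)$. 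If one prefers this route, the only care needed is to invoke that non-hyperplane elements of $\cU$ do not split across the decomposition, which is established in that proof.
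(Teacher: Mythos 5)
Your proof is correct and follows essentially the same route as the paper's: the parameterization and ranges of $m_3,m_2,m_1$ are read off from the packing condition~(\ref{eq_packing_condition}) together with non-negativity, and the $K$-decomposition and reducibility statement are obtained by invoking Proposition~\ref{prop_at_least_one_hyperplane}. The additional geometric reading of the bound $m_2\le q(q-m_3)$ is a harmless extra, not needed for the argument.
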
   
\begin{proof}
  The equation $m_1=q^3-q^2m_3-qm_2$ directly follows from the packing condition~(\ref{eq_packing_condition}) and the ranges 
  $0\le m_2\le q\cdot\left(q-m_3\right)$, $0\le m_3\le q$ follow from the non-negativity 
  of $m_1,m_2,m_3$. Note that we have $m_3\ge 1$ by assumption. The remaining part follows from Proposition~\ref{prop_at_least_one_hyperplane}.
\end{proof}    

\section{Classification of tight irreducible {\avsp}s in \texorpdfstring{$\mathbf{\PG(n-1,2)}$}{PG(n-1,2)} for small dimensions \texorpdfstring{$\mathbf{n}$}{n}}
\label{sec_classification_ti_avsp}
The cases $n\le 3$ have already been treated in Section~\ref{sec_classification_avsp}, so that we assume $n\ge 4$ in the following. Our aim is 
to classify all possible types $(n-1)^{m_{n-1}}\dots 1^{m_1}$ such that a tight irreducible {\avsp} $\cU$ exists in $\PG(n-1,2)$. 
We have $m_{n-1}=0$ and $m_1=0$ due to Corollary~\ref{cor_m_n_minus_1_zero} and Lemma~\ref{lemma_m_1_zero}. From \rev{Corollary}~\ref{corollary_no_tail_of_size_q} 
we conclude $m_l\neq 2$ for the smallest index $1\le l\le n-1$ with $m_l>0$. The possible vectors $\left(m_{n-2},\dots,m_2\right) \in\N_0^{n-3}$ are 
quite restricted by the packing condition~(\ref{eq_packing_condition}). For $n=4$ the only remaining possibility is type $2^4$. From Lemma~\ref{lemma_irreducible_no_multiset} 
we conclude that the four lines are pairwise disjoint, i.e., they form a partial line spread of cardinality $4$. It is well known that each 
partial line spread of cardinality $q^2$ in $\PG(3,q)$ can be extended to a line spread, which has size $q^2+1$.\footnote{One argumentation is based on the 
fact that each $q^k$-divisible (multi-) set of $\tfrac{q^{k+1}-1}{q-1}$ points forms a $(k+1)$-space for each positive integer $k$, see e.g.\ \cite{honold2018partial}.}   
For $q=2$ there is only the Desarguesian line spread and since it has a transitive automorphism group, there is only one equivalence class. The numbers of line spreads in $\PG(3,q)$ 
are $1$, $2$, $3$, $21$, $1347$ for $q=2,3,4,5,7$. 

In the three subsequent subsections we will consider tight irreducible {\avsp}s in $\PG(n-1,2)$ for $n\in\{5,6,7\}$. The possible types are completely determined in all cases, where 
realizations are computed using an integer linear programming (ILP) formulation, see Section~\ref{sec_ilp} in the appendix for the details. If the sizes of the {\avsp}s are not too large 
we were able to also compute all equivalence classes of {\avsp}s using a slight modification of an algorithm from \cite{linton2004finding}, 
see also \cite[Algorithm 4.5]{kaski2006classification}. 
A \texttt{GAP} implementation , based on the \texttt{GAP} package {\lq\lq}FinInG{\rq\rq} \cite{fining} for computations in finite incidence geometry, can be obtained from 
the authors upon request. In the theoretical parts we will also use classification for $2$-divisible sets points that can e.g.\ be found in 
\cite{ubt_eref40887} or \cite{kurz2021divisible}. For the convenience of the reader we will also give a few selected proofs in Section~\ref{sec_details} in the appendix.

\subsection{Tight irreducible {\avsp}s in \texorpdfstring{$\mathbf{\PG(4,2)}$}{PG(4,2)}}
\label{subsec_pg_4_2}
We may use Lemma~\ref{lemma_divisible_sets_of_k_spaces} and Lemma~\ref{lemma_tail_4} to conclude that each {\avsp} of $\PG(n-1,2)$ of type $(n-2)^4$ is 
non-tight if $n\ge 5$. However, we can further tighten the statement to:
\begin{proposition}
  \label{lemma_three_times_co_dim_two}
  Let $\cU=\left\{U_1,\dots,U_r\right\}$ be an {\avsp} of $\PG(n-1,2)$, where $n\ge 4$, $r\ge 4$, and $\dim(U_i)=n-2$ for all $1\le i\le 3$. 
  Then, the elements $\left\{U_4,\dots,U_r\right\}$ can be joined to an $(n-2)$-space $B$ (including the case $r=4$ and $U_4=B$) and there exists an $(n-4)$-space 
  $C$ that is contained in all elements of $\left\{U_1,U_2,U_3,B\right\}$.
\end{proposition}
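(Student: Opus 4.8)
\emph{Plan.} The plan is to split into two cases according to whether some pair among $U_1,U_2,U_3$ spans a hyperplane of $\PG(n-1,2)$, and in each case to produce both $B$ and $C$ essentially explicitly. Write $A_i\coloneq U_i\cap H_\infty$; since $U_i\not\le H_\infty$ this is an $(n-3)$-space, and $U_i\setminus H_\infty$ has $2^{n-3}$ points. As $\cU$ is an {\avsp}, the affine parts $U_1\setminus H_\infty,\dots,U_r\setminus H_\infty$ are pairwise disjoint; hence $U_1,U_2,U_3$ together cover $3\cdot 2^{n-3}$ of the $2^{n-1}$ affine points, the set $\{U_4,\dots,U_r\}$ covers exactly the remaining $2^{n-3}$ of them, and for $i\neq j$ we have $U_i\cap U_j\le H_\infty$ and, by Lemma~\ref{lemma_dimension_condition}, $\dim(U_i\cap U_j)\in\{n-4,n-3\}$.

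\emph{Case A: some pair, say $U_1,U_2$, spans a hyperplane $H\coloneq\langle U_1,U_2\rangle$.} Then $H\neq H_\infty$, $U_1\cup U_2$ covers all $2^{n-2}$ affine points of $H$, so $\{U_1,U_2\}$ is an {\avsp} of $H$; moreover $\dim(U_1\cap U_2)=n-3$ forces $A_1=A_2=U_1\cap U_2$. Since $U_3$ is affinely disjoint from $H$, one gets $U_3\cap H=A_3\le K\coloneq H\cap H_\infty$. Because $q=2$ there is exactly one hyperplane $H^{\ast}\neq H,H_\infty$ through $K$; an affine point of $U_3$ lies outside $H$ and outside $H_\infty$, hence in $H^{\ast}$, so $U_3\le H^{\ast}$. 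A short count (the $2^{n-3}$ affine points of $H^{\ast}$ off $U_3$ all belong to the $2^{n-3}$-element set covered by $\{U_4,\dots,U_r\}$, and conversely) shows that $\{U_3,U_4,\dots,U_r\}$ is an {\avsp} of $H^{\ast}$ with $U_3$ a hyperplane of $H^{\ast}$; removing this one hyperplane (again using $q=2$) leaves $\{U_4,\dots,U_r\}$ as an {\avsp} of the unique third hyperplane $B$ of $H^{\ast}$ through $A_3$, and $\dim B=n-2$, $B\not\le H_\infty$ (when $r=4$ this just says $U_4=B$). Finally $A_B\coloneq B\cap H_\infty=A_3$, because the three hyperplanes $K,U_3,B$ of $H^{\ast}$ all contain $A_3$ and pairwise meet in a space of dimension $n-3$; hence $\{A_1,A_2,A_3,A_B\}$ consists of at most two distinct hyperplanes of $K$, which therefore have a common hyperplane $C$. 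Any such $C$ has dimension $n-4$ and lies in $U_1,U_2,U_3,B$.

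\emph{Case B: no pair spans a hyperplane,} i.e.\ $\dim\langle U_i,U_j\rangle=n$ and $\dim(U_i\cap U_j)=n-4$ for all $i\neq j$. Put $D\coloneq U_1\cap U_2\le H_\infty$ and let $\pi$ be the quotient by $D$, so $\pi$ maps $\PG(n-1,2)$ onto $\PG(3,2)$ with $\overline{H_\infty}\coloneq\pi(H_\infty)$ a plane. Since $D\le U_1,U_2$ we have $U_i=\pi^{-1}(\overline{U_i})$ for $i=1,2$, and $\overline{U_1},\overline{U_2}$ are disjoint lines spanning $\PG(3,2)$ that meet $\overline{H_\infty}$ in distinct points; hence the four points of $\PG(3,2)\setminus\overline{H_\infty}$ lying on $\overline{U_1}\cup\overline{U_2}$ already span $\PG(3,2)$ and so are not an affine plane, and because over $\F_2$ the complement of a non-affine-plane quadruple in $\AG(3,2)$ is again not an affine plane, the four remaining affine points—into which $U_3,\dots,U_r$ must project—do not form an affine plane. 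Now $\overline{U_3}$ is a subspace not lying in $\overline{H_\infty}$ whose affine points are among those four; it cannot equal $\PG(3,2)$ (eight affine points) and cannot be a plane (its four affine points would then form an affine plane), so $\overline{U_3}$ is a line, which forces $D\le U_3$ and $U_3=\pi^{-1}(\overline{U_3})$. The affine part of $U_3$ covers the fibres over two of the four leftover points; the other two leftover points span a line $\overline{B}$ meeting $\overline{H_\infty}$ in its third point, and $B\coloneq\pi^{-1}(\overline{B})$ is an $(n-2)$-space with $B\not\le H_\infty$ whose affine points are exactly those covered by $\{U_4,\dots,U_r\}$, so $\{U_4,\dots,U_r\}$ is an {\avsp} of $B$. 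Here $C\coloneq D$ has dimension $n-4$ and lies in $U_1$, $U_2$, $U_3$ (the last since $D\le U_3$), and in $B$, as a $\pi$-preimage contains $D$.

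\emph{Main obstacle.} The routine ingredients are the affine point counts that upgrade ``$X\subseteq$ (the affine points of some hyperplane)'' to an equality, together with the repeated use of the $\F_2$-fact that a codimension-two subspace lies in exactly three hyperplanes. The one genuinely delicate step is the exclusion of the ``$\overline{U_3}$ is a plane'' possibility in Case B: this is where the elementary $\F_2$-specific fact about complements of non-affine-plane quadruples in $\AG(3,2)$ is essential, and it is precisely the reason the hypothesis $q=2$ cannot be dropped (for $q>2$ one expects cylinder-type configurations to survive).
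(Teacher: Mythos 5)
Your proposal is correct. Case A is essentially the paper's own first case: you join $U_1,U_2$ to a hyperplane $H$, pass to the third hyperplane $H^{\ast}$ through $K=H\cap H_\infty$ (using that over $\F_2$ the three hyperplanes through a codimension-two space cover everything), identify $B$ as the third hyperplane of $H^{\ast}$ through $A_3$, and take $C$ inside $K$ -- this is the paper's $K$, $H'$, $K'$, $B$, $C=K'\cap L$ argument in the same order. (One cosmetic blemish: you assert $U_3\cap H=A_3$ before establishing $U_3\le H^{\ast}$; at that point only $U_3\cap H\le K$ is available, and the equality follows afterwards.) Case B, however, takes a genuinely different route. The paper works with $E_i=U_i\cap H_\infty$, first excludes $\dim\langle E_1,E_2,E_3\rangle=n-2$ by a $K$-decomposition/pigeonhole step, sets $C=E_1\cap E_2\cap E_3$, and then shows the $2^{n-3}$ uncovered affine points form an affine subspace by a coordinate argument intersecting the three hyperplanes built from $\langle C,v_i,v_j+v_h\rangle$. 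You instead quotient by $D=U_1\cap U_2$ and finish inside $\PG(3,2)$, using the $\F_2$-fact that the complement of an affine plane in $\AG(3,2)$ is again an affine plane to force $\overline{U_3}$ to be a line, hence $D\le U_3$, and reading off $B$ as the preimage of the line through the two remaining affine points with $C=D$. The two arguments produce the same $C$ in effect (the paper's $E_1\cap E_2\cap E_3$ equals $U_1\cap U_2$ once $D\le U_3$ is known), but yours is shorter, coordinate-free, dispenses with the separate spanning subcase, uses only that $U_1,U_2$ fail to span a hyperplane, and isolates exactly where $q=2$ enters; the paper's version is more explicit about the structure of the uncovered point set, which is closer in spirit to its later divisibility/cylinder-type discussions.
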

\begin{proof}
  First we assume that two elements of $\left\{U_1,U_2,U_3\right\}$ can be joined to an $(n-1)$-space $H$. Without loss of generality, we assume that $U_1$ and $U_2$ can be joined to $H$. 
  Let $K\coloneq H\cap H_\infty$, so that $\dim(K)=n-2$. By $H'$ we denote the unique hyperplane containing $K$ that is not equal to $H$ or $H_\infty$. Observe that 
  $\left\{U_3,\dots,U_r\right\}$ is an {\avsp} of $H'$ and $K$ is {\lq\lq}the hyperplane at infinity{\rq\rq} of $H'$. Next we set $K'\coloneq K\cap U_3$, 
  so that $\dim(K')=n-3$. Let $B$ denote the unique $(n-2)$-space in $H'$ that contains $K'$ and is not equal to $U_3$ or $K$. With this, $\left\{U_4,\dots,U_r\right\}$ 
  is an {\avsp} of $B$ (including the case $r=4$, $U_4=B$). Note that the $(n-3)$-space $K'$ is contained in all elements of $\left\{H,U_3,B\right\}$. Since 
  $\left\{U_1,U_2\right\}$ forms an {\avsp} of $H$ and $\dim(U_1)=\dim(\rev{U_2})=n-2$, there exists an $(n-4)$-space $C\rev{=K'\cap L}$ that is contained in all elements of $\left\{U_1,U_2,U_3,B\right\}$, \rev{where $L=U_1\cap U_2\subset K$ is an $(n-3)$-space}.
  
  Otherwise, we assume that no two elements of $\left\{U_1,U_2,U_3\right\}$ can be joined to an $(n-1)$-space, so that $\dim(U_i\cap U_j)=n-4$ for all $1\le i<j\le 3$. 
  We set $E_i\coloneq U_i\cap H_\infty$, so that $\dim(E_i)=n-3$, for all $1\le i\le 3$ and $\dim(E_i\cap E_j)=n-4$ for all $1\le i<j\le 3$. Let 
  $K\coloneq \left\langle E_1,E_2,E_3\right\rangle\le H_\infty$, so that $n-2\le\dim(K)\le n-1$. If $\dim(K)=n-2$, then consider the $K$-decomposition $\cU^{(1)},\cU^{(2)}$ of $\cU$ 
  and let $H_1,H_2$ be the corresponding hyperplanes. Since $E_1,E_2,E_3\le K$, we have that either $U_i\le H_1$ or $U_i\le H_2$ for all indices $1\le i\le 3$. By the pigeonhole 
  principle two of the three $(n-2)$-spaces in $\cU$ have to be contained in the same hyperplane,  \rev{which contradicts 
  $\dim(U_i\cap U_j)=n-4$.}
  
  Thus, we have 
  $\dim(K)=n-1$, i.e., $K=H_\infty$. Since $\dim\left(\left\langle E_1, E_2\right\rangle\right)=n-2$, $\dim(E_3)=n-3$, and $\dim(K)=n-1$, we have 
  $$
    \dim(\left\langle E_1, E_2\right\rangle\cap E_3)=n-4.
  $$   
  Since $\dim(E_1\cap E_3)=\dim(E_2\cap E_3)=n-4$, we have $\dim(C)=n-4$ for $C\coloneq E_1\cap E_2\cap E_3$. Pick three linearly independent vectors $v_1,v_2,v_3$ such that 
  $E_1=\langle C,v_1\rangle$, $E_2=\langle C,v_2\rangle$, $E_3=\langle C,v_3\rangle$, and $H_\infty=\langle C,v_1,v_2,v_3\rangle$. Let $P_1,P_2$ be two different arbitrary 
  points outside of $H_\infty$ that or not covered by $U_1$, $U_2$, or $U_3$. For pairwise different $i,j,h\in\{1,2,3\}$ consider the $(n-2)$-space 
  $K_{i,j,j}\coloneq \left\langle C,v_i,v_j+v_h\right\rangle$ and let $H_{i,j,j}$ be the hyperplane that contains $K_{i,j,h}$ and $U_i$. Since all points 
  in $H_{i,j,h}\backslash H_\infty$ are covered by $U_1,U_2,U_3$ the points $P_1,P_2$ have to be contained in the other hyperplane containing $K_{i,j,h}$ not equal 
  to $H_{i,j,h}$ and $H_\infty$, so that $P_1-P_2\in \left\langle C,v_i,v_j+v_h\right\rangle$. Since
  $$
    \left\langle C,v_1,v_2+v_3\right\rangle \cap \left\langle C,v_2,v_1+v_3\right\rangle \cap \left\langle C,v_3,v_1+v_2\right\rangle =
    \left\langle C,v_1+v_2+v_3\right\rangle,  
  $$ 
  the $2^{n-3}$ points outside of $H_\infty$ that are not covered by $U_1$, $U_2$, or $U_3$ have to form an affine subspace $B\ge C$. If $\#\cU=r=4$, then 
  $B=U_4$. If $\#\cU\rev{\ge}5$, then the elements in $\left\{U_4,\dots,U_r\right\}$ form an {\avsp} of $B$. 
\end{proof}

\begin{corollary}
\label{cor_n_minus_2_ub}
  Let $\cU$ be an irreducible tight {\avsp} of $\PG(n-1,2)$ of type $(n-2)^{m_{n-2}}\dots 2^{m_2}$, where $n\ge 5$. Then, we have 
  $m_{n-2}\le 2$.
\end{corollary}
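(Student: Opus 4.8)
The plan is to argue by contradiction via Proposition~\ref{lemma_three_times_co_dim_two}. Suppose $\cU$ is an irreducible tight {\avsp} of $\PG(n-1,2)$ of the stated type with $m_{n-2}\ge 3$, and fix three of its $(n-2)$-dimensional elements $U_1,U_2,U_3$. First I would check that $\cU$ has a fourth element: each $(n-2)$-space of $\cU$ covers exactly $2^{n-3}$ affine points and these point sets are pairwise disjoint, so $U_1,U_2,U_3$ together cover $3\cdot 2^{n-3}$ affine points, strictly fewer than the $2^{n-1}=4\cdot 2^{n-3}$ affine points that must be covered by the packing condition~(\ref{eq_packing_condition}). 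Hence $r\coloneq\#\cU\ge 4$, and Proposition~\ref{lemma_three_times_co_dim_two} applies to $U_1,U_2,U_3$.

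By that proposition, $\left\{U_4,\dots,U_r\right\}$ can be joined to an $(n-2)$-space $B$, and there is an $(n-4)$-space $C$ contained in each of $U_1,U_2,U_3,B$. Now I would split on the size of $\cU$. If $r\ge 5$, then $\left\{U_4,\dots,U_r\right\}$ is an {\avsp} of $B$ with at least two members, $\{4,\dots,r\}$ is a proper subset of $\{1,\dots,r\}$, and $\dim(B)=n-2<n$; this is exactly the definition of reducibility, contradicting the hypothesis on $\cU$. If instead $r=4$, then $B=U_4$, so the $(n-4)$-space $C$ is contained in all of $U_1,U_2,U_3,U_4$, i.e.\ in the intersection of all elements of $\cU$; since $n\ge 5$ we have $\dim(C)=n-4\ge 1$, so this intersection contains a point and $\cU$ is not tight, again a contradiction. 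Therefore $m_{n-2}\le 2$.

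The argument is short because the geometric work is already carried out inside Proposition~\ref{lemma_three_times_co_dim_two}, so I expect no real obstacle; the only point requiring a little care is that for $r=4$ the singleton $\left\{U_4\right\}$ does not certify reducibility, so one must switch to the tightness argument using $C$ in that case. (Note that $m_{n-2}=4$, i.e.\ type $(n-2)^4$, automatically lands in the $r=4$ case, consistently recovering the earlier observation that such {\avsp}s are non-tight.)
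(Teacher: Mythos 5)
Your proof is correct and is exactly the intended derivation: the paper states this as an immediate corollary of Proposition~\ref{lemma_three_times_co_dim_two}, with the $r\ge 5$ case contradicting irreducibility and the $r=4$ case contradicting tightness via the $(n-4)$-space $C$ (which is a nontrivial subspace since $n\ge 5$). Your preliminary check that $r\ge 4$ and your remark about why the $r=4$ case needs the tightness argument rather than reducibility are the right points of care, and nothing further is needed.
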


\noindent
Together with the conditions $m_{n-1}=m_1=0$ and the packing condition~(\ref{eq_packing_condition}) we obtain: 
\begin{corollary}
  Let $\cU$ be an irreducible tight {\avsp} of $\PG(5-1,2)$. Then the type of $\cU$ is given by $3^2 2^4$, $3^1 2^6$, or $2^8$.
\end{corollary}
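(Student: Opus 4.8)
The plan is to combine the structural restrictions already proved for irreducible tight {\avsp}s of $\PG(n-1,2)$ with the packing condition, specialized to $n=5$ and $q=2$. Write the type as $4^{m_4}3^{m_3}2^{m_2}1^{m_1}$. First I would invoke Corollary~\ref{cor_m_n_minus_1_zero} to get $m_4=0$ and Lemma~\ref{lemma_m_1_zero} to get $m_1=0$. The packing condition~(\ref{eq_packing_condition}) for $n=5$, $q=2$ reads $8m_4+4m_3+2m_2+m_1=2^4=16$, which after substituting $m_4=m_1=0$ collapses to the single Diophantine equation $m_2+2m_3=8$ with $m_2,m_3\in\N_0$.

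Next I would apply Corollary~\ref{cor_n_minus_2_ub}. Since $m_4=0$, the type of $\cU$ has the form $(n-2)^{m_{n-2}}\dots 2^{m_2}$ with $n-2=3$, so the hypothesis of that corollary is met, and it yields $m_3\le 2$. Combining $m_2+2m_3=8$ with $0\le m_3\le 2$ leaves exactly the three solutions $(m_3,m_2)=(0,8),(1,6),(2,4)$, i.e.\ the types $2^8$, $3^12^6$, and $3^22^4$. (One could alternatively note that Corollary~\ref{corollary_no_tail_of_size_q} independently rules out $(m_3,m_2)=(3,2)$ and $(4,0)$ via $m_\ell\neq q$ for the tail, but once $m_3\le 2$ is in hand this is unnecessary.)

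I do not expect a genuine obstacle here: all the real work — excluding hyperplane elements, excluding point elements, and bounding the number of codimension-two elements (the latter resting ultimately on Proposition~\ref{lemma_three_times_co_dim_two}) — has already been carried out in Corollary~\ref{cor_m_n_minus_1_zero}, Lemma~\ref{lemma_m_1_zero}, and Corollary~\ref{cor_n_minus_2_ub}, so what remains is a short finite case analysis. The only point requiring a moment of care is recording that $m_{n-1}=m_1=0$ is precisely what puts us in the situation covered by Corollary~\ref{cor_n_minus_2_ub} before applying it. I would also remark that the statement asserts only that these three types are the \emph{possible} ones; realizability of those that do occur is established separately by the ILP computations described in this section.
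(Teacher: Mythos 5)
Your proof is correct and follows essentially the same route as the paper, which obtains this corollary precisely from $m_{4}=m_{1}=0$, the packing condition ($2m_3+m_2=8$ after substitution), and the bound $m_3\le 2$ of Corollary~\ref{cor_n_minus_2_ub}. One small slip in your dispensable aside: Corollary~\ref{corollary_no_tail_of_size_q} does rule out $(m_3,m_2)=(3,2)$ (tail of size $q=2$), but it says nothing about $(4,0)$, whose tail has size $4$; that case is excluded by $m_3\le 2$ itself (equivalently via Lemma~\ref{lemma_divisible_sets_of_k_spaces} and Lemma~\ref{lemma_tail_4}).
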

All types can indeed be realized and corresponding numbers of equivalence classes are given by $3$, $4$, and $2$, respectively. I.e., for $n=4$ we have 
$9$ non-isomorphic examples in total. Below are representatives:

\smallskip

\noindent
{\scriptsize
E1, $2^8$:  $\< 10000,01000\>$, $\< 10100,00010\>$, $\< 11100,00001\>$, $\< 10010,01100\>$, $\< 11010,00101\>$, $\< 10001,01010\>$, $\< 10011,00110\>$, $\< 10111,\allowbreak 01110\>$.\\
\rev{E2}, $2^8$: $\< 10000,01000\>$, $\< 10100,00010\>$, $\< 11100,00001\>$, $\< 10010,01101\>$, $\< 10001,01011\>$, $\< 11001,00111\>$, $\< 10101,01110\>$, $\< 10011,\allowbreak 00100\>$.\\
E3, \rev{$3^1 2^6$}: $\< 10000,01000\>$, $\< 10100,00010\>$, $\< 11100,00001\>$, $\< 10010,01100\>$, $\< 10001,01010\>$, $\< 10111,01101\>$, $\< 10011,01010,00110\>$.\\
E4, \rev{$3^1 2^6$}: $\< 10000,01000\>$, $\< 10100,00010\>$, $\< 11100,00001\>$, $\< 10010,01100\>$, $\< 11001,00011\>$, $\< 10011,00100\>$, $\< 10001,01010,00100\>$.\\ 
E5, \rev{$3^1 2^6$}: $\< 10000,01000\>$, $\< 10100,00010\>$, $\< 11100,00001\>$, $\< 10010,01001\>$, $\< 10001,01111\>$, $\< 10111,01101\>$, $\< 10011,01010,00110\>$.\\
\rev{E6}, \rev{$3^1 2^6$}: $\< 10000,01000\>$, $\< 10100,00010\>$, $\< 11100,00001\>$, $\< 10010,01100\>$, $\< 10001,00100\>$, $\< 11001,00011\>$, $\< 10011,01000,00100\>$.\\
E7, \rev{$3^2 2^4$}: $\< 10000,01000\>$, $\< 10100,00010\>$, $\< 11100,00001\>$, $\< 10010,01011\>$, $\< 11010,00100,00001\>$, $\< 10001,00100,00010\>$.\\
E8, \rev{$3^2 2^4$}: $\< 10000,01000\>$, $\< 10100,00010\>$, $\< 11100,00001\>$, $\< 10010,01011\>$, $\< 10001,01010,00100\>$, $\< 10011,01001,00100\>$.\\
E9, \rev{$3^2 2^4$}: $\< 10000,01000\>$, $\< 10100,00010\>$, $\< 11100,00001\>$, $\< 10101,01011\>$, $\< 10010,01000,00001\>$, $\< 10001,01000,00110\>$.
}

\smallskip 
\noindent
We remark that the hypothetical type $3^3 2^2$ is also excluded by Corollary~\ref{corollary_no_tail_of_size_q}.  

\medskip 
 
Similarly as we have constructed $\cT'$ from the tail $\cT$ in Lemma~\ref{lemma_divisible_sets_of_k_spaces}, we can consider the set 
$\cU'\coloneq \left\{ U\cap H_\infty \,:\, U\in\cU\right\}$ for an {\avsp} $\cU$ of $\PG(n-1,q)$. If $\cU$ is an irreducible tight 
{\avsp} of $\PG(n-1,q)$ of type $2^{m_2} 3^{m_3}$, where $m_2=q^{\rev{n-2}}-qm_3$, then $\cU'$ is a configuration of $m_2$ points 
and $m_3$ lines in $H_\infty\cong \PG(n-2,q)$. The points are pairwise disjoint, so that Lemma~\ref{lemma_divisible_sets_of_k_spaces} yields that they 
form a $q$-divisible set. Any two lines can meet in at most a point. If $n=5$, then any two lines 
indeed intersect in a point. So, the maximum point multiplicity is at most $m_3+1$. 
\rev{We remark that the possibilities for $\cU'$ can be classified completely theoretically, i.e., without the use of computer programs. Due to space limitations we refer the interested reader to the corresponding arXiv preprint and only state two necessary criteria for $\cU'$.}
\begin{lemma} 
  \label{lemma_spanning}
  Let $\cU$ be an irreducible {\avsp} of $\PG(n-1,q)$ not of type $(n-1)^q$ and $\cU'\coloneq \left\{U\cap H_\infty\,:\,U\in\cU\right\}$. Then $\cU'$ is spanning, i.e., 
  $\cU'$ spans $H_\infty$.
\end{lemma}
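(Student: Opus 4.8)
The plan is to argue by contradiction. Suppose $W\coloneq\langle U\cap H_\infty : U\in\cU\rangle$ is a proper subspace of $H_\infty$. Since $\dim(H_\infty)=n-1$, I can choose an $(n-2)$-space $K$ with $W\le K\le H_\infty$, take the $q$ hyperplanes $H_1,\dots,H_q$ through $K$ other than $H_\infty$, and consider the associated $K$-decomposition $\cU^{(1)},\dots,\cU^{(q)}$.

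The first step is to show that every $U\in\cU$ is contained in exactly one $H_i$. For existence: $U\cap H_\infty\le W\le K$, so condition~(1) of Lemma~\ref{lemma_characterization_full} holds, and hence by~(2) there is an index $i$ with $U\le H_i$. For uniqueness: if also $U\le H_j$ with $j\neq i$, then $U\le H_i\cap H_j=K\le H_\infty$, contradicting $U\not\le H_\infty$. Thus $\cU=\bigsqcup_{i=1}^q\cU^{(i)}$ with $\cU^{(i)}=\{U\in\cU:U\le H_i\}$, and each $\cU^{(i)}$ is an {\avsp} of $H_i$ with $K$ as its hyperplane at infinity. Moreover each block is non-empty, because $H_i\setminus H_\infty$ is non-empty and --- since the affine parts $H_1\setminus H_\infty,\dots,H_q\setminus H_\infty$ partition $\PG(n-1,q)\setminus H_\infty$ --- its points can only be covered by elements of $\cU^{(i)}$.

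The second step is a dichotomy on the block sizes. If $\#\cU^{(i)}\ge 2$ for some $i$, then the elements of $\cU^{(i)}$ form an {\avsp} of the proper subspace $H_i$ and constitute a proper subset of $\cU$ (they cannot be all of $\cU$, since at least one other block is non-empty and its elements are not $\le H_i$), so $\cU$ is reducible, contradicting the hypothesis. Hence every block is a singleton; a one-element {\avsp} of $H_i$ must be $\{H_i\}$ itself, so $\cU=\{H_1,\dots,H_q\}$ is of type $(n-1)^q$, again contradicting the hypothesis. Therefore $W=H_\infty$, i.e.\ $\cU'$ spans $H_\infty$.

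I do not expect a real obstacle: the proof is essentially bookkeeping around the $K$-decomposition together with Lemma~\ref{lemma_characterization_full}. The only place that needs a bit of care is the dichotomy, where one must exclude the degenerate possibility $\cU^{(i)}=\cU$; this is handled by noting that each affine point $P$ lies on the unique hyperplane $\langle P,K\rangle\neq H_\infty$ through $K$, so the $H_i$ genuinely partition the affine points and no single block can exhaust $\cU$.
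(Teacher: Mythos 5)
Your proof is correct and follows essentially the same route as the paper: assume $\cU'$ lies in a hyperplane $K$ of $H_\infty$, use Lemma~\ref{lemma_characterization_full} to see that the $K$-decomposition partitions $\cU$, and conclude that $\cU$ is either reducible or of type $(n-1)^q$, contradicting the hypotheses. You merely spell out the bookkeeping (uniqueness of the containing $H_i$, non-emptiness of the blocks, exclusion of a single block exhausting $\cU$) that the paper leaves implicit.
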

\begin{proof}
  Assume that $K$ is a hyperplane of $H_\infty$ that contains all elements of $\cU'$. From Lemma~\ref{lemma_characterization_full} we can conclude that the 
  $K$-decomposition $\cU^{(1)},\dots,\cU^{(q)}$, with corresponding hyperplanes $H_1,\dots,H_q$, is a partition of $\cU$, i.e., the elements of $\cU^{(i)}$ can 
  be joined to $H_i$ for all $1\le i\le q$. Since we have assumed that $\cU$ is not of type $(n-1)^q$ we obtain a contradiction.
\end{proof}
\begin{lemma} 
  \label{lemma_partition_packing}
  Let $\cU$ be an {\avsp} of $\PG(n-1,q)$ of type $(n-1)^{m_{n-1}}\dots 2^{m_2}$ and $\cU'\coloneq \left\{U\cap H_\infty\,:\,U\in\cU\right\}$. For each  
  hyperplane $K$ of $H_\infty$ let $a_i^K$ denote the number of $i$-dimensional elements of $\cU'$ that are contained in $K$ and $b_i^K=m_{i+1}-a_{i}^K$ the number of 
  $i$-dimensional elements of $\cU'$ that are \rev{not} contained in $K$, where $1\le i\le n-2$. Then there exist $c_{i,j}^K\in\N_0$ for all $1\le j\le q$, $1\le i\le n-2$  such that
  \begin{eqnarray}
    \sum_{j=1}^q c_{i,j}^K =a_{i}^K\quad \forall 1\le i\le n-2,\label{eq_part_pack_1}\\
    \sum_{i=1}^{n-2} \left(c_{i,j}^K\cdot q^{i} \,+\, b_i^K \cdot q^{i-1}=q^{n-2}\right)     \quad \forall 1\le j\le q. \label{eq_part_pack_2}
  \end{eqnarray}  
\end{lemma}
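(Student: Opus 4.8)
The plan is to work one hyperplane $K\le H_\infty$ at a time and to extract everything from the $K$-decomposition $\cU^{(1)},\dots,\cU^{(q)}$ of $\cU$, whose associated hyperplanes we write as $H_1,\dots,H_q$. The starting point is the elementary count: each $H_j$ is an $(n-1)$-space meeting $H_\infty$ in $K$, so it carries exactly $q^{n-2}$ points outside $K$; and since every point of $\PG(n-1,q)$ outside $H_\infty$ lies in a unique element of $\cU$, the point sets $(U\cap H_j)\setminus K$, for $U\in\cU$, partition these $q^{n-2}$ points of $H_j$. Equivalently this is the packing condition~(\ref{eq_packing_condition}) applied to the {\avsp} $\cU^{(j)}$ of $H_j$ (with $K$ as its hyperplane at infinity), which remains valid even in the degenerate case $\cU^{(j)}=\{H_j\}$ that can occur when $m_{n-1}>0$.

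The heart of the argument is a dichotomy for each $U\in\cU$, according to whether $U\cap H_\infty\le K$; note that the hypotheses of Lemmas~\ref{lemma_characterization_full} and \ref{lemma_characterization_splitted} are met, because $U\not\le H_\infty$ forces $\dim(U\cap H_\infty)=\dim(U)-1$. Write $i\coloneq\dim(U)-1$. If $U\cap H_\infty\le K$, Lemma~\ref{lemma_characterization_full} produces a unique index $t(U)$ with $U\le H_{t(U)}$ and with $U\cap H_j=U\cap K$ for every $j\ne t(U)$; hence $U$ contributes all of its $q^{i}$ points outside $K$ to $H_{t(U)}$ and none to any other $H_j$. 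If instead $U\cap H_\infty\not\le K$, Lemma~\ref{lemma_characterization_splitted} gives $\dim(U\cap H_j)=i$ with $U\cap H_j\not\le H_\infty$ for every $j$ (if $U\cap H_j\le H_\infty$ then $U\cap H_j=U\cap K$ has dimension $i-1$, a contradiction); hence $U$ contributes exactly $q^{i-1}$ points outside $K$ to each of the $q$ hyperplanes $H_j$.

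With this in hand the two displayed identities are pure bookkeeping. Define $c_{i,j}^K$ to be the number of $U\in\cU$ with $\dim(U)=i+1$, $U\cap H_\infty\le K$, and $t(U)=j$. The elements $U\in\cU$ of dimension $i+1$ with $U\cap H_\infty\le K$ are precisely those whose trace $U\cap H_\infty$ is an $i$-dimensional element of $\cU'$ lying inside $K$, of which there are $a_i^K$; sorting them by the value of $t(U)$ gives $\sum_{j=1}^q c_{i,j}^K=a_i^K$, i.e.~(\ref{eq_part_pack_1}). The remaining $m_{i+1}-a_i^K=b_i^K$ elements of dimension $i+1$ fall into the second case. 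Summing, over $U\in\cU$, the contributions computed above to the fixed hyperplane $H_j$ then yields $q^{n-2}=\sum_{i=1}^{n-2}c_{i,j}^K q^{i}+\sum_{i=1}^{n-2}b_i^K q^{i-1}$, which is~(\ref{eq_part_pack_2}).

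I do not expect a genuine obstacle here: the proof is a clean case split fed into a counting argument. The only points demanding care are verifying that Lemmas~\ref{lemma_characterization_full} and \ref{lemma_characterization_splitted} apply (the trace $U\cap H_\infty$ always has codimension one in $U$), and keeping the dimension bookkeeping straight --- an element of the first type contributes at its own dimension to a single $H_j$, while an element of the second type contributes at one dimension lower to all $q$ of them --- together with the small check that the degenerate pieces $\cU^{(j)}=\{H_j\}$ do not break the count. If anything, the ``hard'' part is simply stating the dichotomy cleanly via the two characterization lemmas; everything after that is the packing count for $H_j$.
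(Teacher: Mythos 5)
Your proof is correct and follows essentially the same route as the paper's: fix $K$, take the $K$-decomposition with hyperplanes $H_1,\dots,H_q$, use Lemma~\ref{lemma_characterization_full} to assign each $U$ with $U\cap H_\infty\le K$ to a unique $H_j$ (defining $c_{i,j}^K$), use Lemma~\ref{lemma_characterization_splitted} to see that each remaining $U$ contributes $q^{\dim(U)-2}$ affine points to every $H_j$, and conclude by the packing count for $H_j$. Your version merely spells out the bookkeeping a bit more explicitly than the paper does.
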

\begin{proof}
  For an arbitrary but \rev{fixed} hyperplane $K$ of $H_\infty$ let $\cU^{(1)},\dots,\cU^{(q)}$ be the $K$-decomposition of $\cU$ with corresponding hyperplanes $H_1,\dots,H_q$.
  From Lemma~\ref{lemma_characterization_full} we conclude that for each element $U\in \cU$ with $U\cap H_\infty\le K$ there exists an index $1\le j\le q$ such that $U\le H_j$. 
  The \rev{integers} $c_{i,j}^K$ just count how many $(i+1)$-dimensional elements \rev{of $\cU$} are contained in $H_j$ \rev{(which depends on $K$)}. Since the hyperplanes $H_1,\dots,H_q$ are pairwise disjoint, we obtain 
  Equation~(\ref{eq_part_pack_1}). From Lemma~\ref{lemma_characterization_splitted} we conclude that for each element $U\in\cU$ such that $U\cap H_\infty\not\le K$ we have 
  $\#\left(\rev{(} U\cap H_j\rev{)} \backslash H_\infty\right)=q^{\dim(U)-2}$, so that the packing condition \rev{for $H_j$} yields Equation~(\ref{eq_part_pack_2}).   
\end{proof}

We call the process of moving from 
$\cU'$ to $\cU$ the \emph{extension problem}. An integer linear programming formulation is given in Section~\ref{sec_ilp} in the appendix. 
Note that the extension problem comprises additional symmetry given by the pointwise stabilizer of $H_\infty$ of order $q^{n-1}$.  

Given a set $\cU'$ satisfying all of the necessary conditions mentioned so far it is neither clear that an extension to a corresponding {\avsp} $\cU$ always exists nor
that it is, in the case of existence, unique up to symmetry. Indeed, we will give counter examples later on. However, for the nine classified configurations $\cU'$ in $\PG(3,2)$ it turns out 
that there always is an up to symmetry unique extension.   

\subsection{Tight irreducible {\avsp}s in \texorpdfstring{$\mathbf{\PG(5,2)}$}{PG(5,2)}}
\label{subsec_pg_5_2}
\begin{lemma}
  \label{lemma_no_type_4_2_3_4}
  For $n\ge 6$ no tight irreducible {\avsp} of type $(n-2)^2 (n-3)^4$ in  $\PG(n-1,2)$ exists.
\end{lemma}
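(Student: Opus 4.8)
The plan is to suppose such a $\cU=\{U_1,U_2,U_3,U_4,U_5,U_6\}$ exists, with $\dim(U_1)=\dim(U_2)=n-2$ and $\dim(U_i)=n-3$ for $i\ge 3$, and to produce a point common to all six members, contradicting tightness. Write $E_k:=U_k\cap H_\infty$ for $k\in\{1,2\}$ and $F_i:=U_i\cap H_\infty$ for $i\ge 3$.

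First I would analyse the tail. By Lemma~\ref{lemma_dimension_condition}, $\dim(U_1\cap U_2)\in\{n-4,n-3\}$; if it equalled $n-3$ then $\langle U_1,U_2\rangle$ would be a hyperplane whose affine part is partitioned by $\{U_1,U_2\}$, making $\cU$ reducible, so $\dim(U_1\cap U_2)=n-4$ and $U_1\cap U_2=E_1\cap E_2$. Next, Lemma~\ref{lemma_divisible_sets_of_k_spaces} shows that $\cT'=\{F_3,\dots,F_6\}$ is a $2$-divisible set of four $(n-4)$-spaces in $H_\infty\cong\PG(n-2,2)$, so Lemma~\ref{lemma_tail_4} yields an $(n-5)$-space $B$ with $B\le F_i$ for all $i$, a plane $E$ and a line $L\le E$ with $\dim(\langle E,B\rangle)=n-2$, such that $\{F_i\}=\{\langle P,B\rangle:P\in E\setminus L\}$. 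Since $E\cap B$ is trivial and the four points $P$ are distinct, $F_i\cap F_j=B$ for $i\ne j$, hence $\bigcap_{i\ge 3}U_i=\bigcap_{i\ge 3}F_i=B$. As $n\ge 6$ this $(n-5)$-space contains a point, so it suffices to show $B\le U_1\cap U_2$.

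For that I would use the $W$-decomposition $\cU^{(1)},\cU^{(2)}$ of $\cU$ along $W:=\langle F_3,\dots,F_6\rangle=\langle E,B\rangle$, an $(n-2)$-space in $H_\infty$, with corresponding hyperplanes $H_1,H_2$. Since every $F_i\le W$, Lemma~\ref{lemma_characterization_full} places each tail member inside $H_1$ or $H_2$; and Lemma~\ref{lemma_spanning} together with $W$ being a hyperplane of $H_\infty$ shows that not both $E_1,E_2$ lie in $W$. Case~(a): neither $E_1$ nor $E_2$ lies in $W$. Then both $U_1,U_2$ split, and counting the $2^{n-2}$ affine points of each $H_j$ forces exactly two tail members into each $H_j$, so each $\cU^{(j)}$ is an {\avsp} of $H_j\cong\PG(n-2,2)$ of type $(n-3)^4$; by Proposition~\ref{lemma_three_times_co_dim_two} (applicable since $n\ge 6$) there is an $(n-5)$-space $C$ contained in all four members of $\cU^{(1)}$, and since $C$ lies in the two tail members among them it lies in their intersection $=B$, whence $C=B$ by dimensions, giving $B\le U_1\cap H_1\le U_1$ and $B\le U_2\cap H_1\le U_2$. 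Case~(b): exactly one of them lies in $W$, say $E_2\le W$ and $E_1\not\le W$ (relabel if needed), so $U_2\le H_1$ (relabel $H_1,H_2$ if needed) by Lemma~\ref{lemma_characterization_full} and $U_1$ splits by Lemma~\ref{lemma_characterization_splitted}; counting forces exactly one tail member $U_{i_1}$ into $H_1$ and three into $H_2$. Applying Proposition~\ref{lemma_three_times_co_dim_two} to the type-$(n-3)^4$ partition $\cU^{(2)}$ of $H_2$ gives, as before, $B\le U_1\cap H_2\le U_1$. In $H_1$ the member $U_2$ is a hyperplane, so the other two members $U_1\cap H_1$ and $U_{i_1}$ of the {\avsp} $\cU^{(1)}$, being disjoint from $U_2$ off the infinity hyperplane $W$, must both lie in the unique third hyperplane $H_1'$ of $H_1$ through $E_2:=U_2\cap W$; then $F_{i_1}=U_{i_1}\cap H_\infty=U_{i_1}\cap(H_1'\cap H_\infty)=U_{i_1}\cap E_2\le E_2$, so $B\le F_{i_1}\le E_2\le U_2$. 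In both cases $B\le U_1\cap U_2$.

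Putting it together, $B\le\bigcap_{i=1}^{6}U_i$ with $\dim(B)=n-5\ge 1$, so the common intersection contains a point, contradicting tightness; hence no such $\cU$ exists. I expect the main obstacle to be Case~(b): there $\cU^{(1)}$ is not of the clean type $(n-3)^4$ -- it contains a hyperplane of $H_1$ -- so Proposition~\ref{lemma_three_times_co_dim_two} does not apply to it, and one must argue separately, via the packing condition (Lemma~\ref{lemma_characterization_full}/Lemma~\ref{lemma_characterization_splitted}), that the small tail member trapped in $H_1$ already has its trace at infinity inside $E_2$; carefully tracking which affine points the split pieces of $U_1$ and $U_2$ cover in each $H_j$ is the delicate bookkeeping.
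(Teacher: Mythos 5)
Your proposal is correct, and after the shared opening (Lemma~\ref{lemma_divisible_sets_of_k_spaces} plus Lemma~\ref{lemma_tail_4}, producing the $(n-5)$-space $B$ common to the four tail traces) it takes a genuinely different route from the paper. The paper uses tightness only once, to force $B\cap E_1\cap E_2=\emptyset$; it then combines the dimension condition $\dim(E_i\cap F_j)\ge n-5$ with the structure $F_j=\langle Q_j,B\rangle$ to deduce $n\le 7$, and kills the remaining cases $n=7$ and $n=6$ by showing that $\cU'$ cannot span $H_\infty$, contradicting irreducibility via Lemma~\ref{lemma_spanning}. You instead aim the contradiction at tightness itself: decomposing along the hyperplane $W=\langle E,B\rangle$ of $H_\infty$, locating the pieces with Lemma~\ref{lemma_characterization_full}, Lemma~\ref{lemma_characterization_splitted} and the affine point count, and then re-using Proposition~\ref{lemma_three_times_co_dim_two} inside $H_1$ and $H_2$ to trap $B$ in $U_1\cap U_2$, so that $B$ lies in all six members. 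Your version is uniform in $n\ge 6$ (no case split on $n$, only on how many of $E_1,E_2$ lie in $W$) and is more structural, at the cost of more bookkeeping inside the hyperplanes; the paper's version is shorter but needs the separate $n=6$ and $n=7$ analyses and the spanning criterion. Two steps worth making explicit: the four tail traces are pairwise distinct (otherwise the corollary to Lemma~\ref{lemma_irreducible_no_multiset} would force dimension $n-1$), so Lemma~\ref{lemma_tail_4} really applies to a set --- a point the paper also leaves implicit; and in your Case~(b) the claim that $U_1\cap H_1$ and $U_{i_1}$ lie in the third hyperplane $H_1'$ through $E_2$ is sound for $q=2$ (the three hyperplanes through $W\cap U_2$ cover $H_1$, and a subspace not contained in $W$ is spanned by its affine points) and can alternatively be obtained by applying Proposition~\ref{prop_at_least_one_hyperplane} to $\cU^{(1)}$ inside $H_1$, which closes the one obstacle you flag at the end.
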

\begin{proof}
  Assume that such an {\avsp} $\cU$ exists and consider the intersections of the elements with the hyperplane $H_\infty$ at infinity, i.e., 
  $\cU'\coloneq \left\{U\cap H_\infty\,:\, U\in\cU\right\}$. By $E_1,E_2$ we denote the two $(n-3)$-spaces and by $L_1,\dots,L_4$ the four $(n-4)$-spaces. The intersection of 
  $E_1$ and $E_2$ is an $(n-4)$-space $L'$ and $\dim(E_i \rev{\cap} L_j)\ge n-5$ for all $i=1,2$ and $j=1,\dots,4$. From Lemma~\ref{lemma_divisible_sets_of_k_spaces} we conclude 
  that $\cT'=\left\{L_1,\dots,L_4\right\}$ is a $2$-divisible set of four $(n-4)$-spaces, so that Lemma~\ref{lemma_tail_4} implies the existence of a plane $E\le H_\infty$,
  a line $L\le E$, and an $(n-5)$-space $B\le H_\infty$ with $B\cap E=\emptyset$ and
  $$
    \left\{L_1,\dots,L_4\right\}=\left\{\langle Q,B\rangle\,:\, Q\in E\backslash L\right\}.
  $$
  Since $\cU$ is tight we have $B\cap L' =\emptyset$. However, $\dim(E_i \rev{\cap} L_j)\ge n-5$ implies $\dim(L',L_j)\ge n-6$ for all $1\le j\le 4$. So, we clearly have 
  $n\le 7$.

  \rev{For $n=7$ we have $\dim(E_i\cap B)\ge \dim(E_i \cap L_j)-1\ge n-6=1$, where $1\le i\le 2$ and $1\le j\le 4$, so that $\dim(L'\cap B)=0$ implies $E_i\le K:=\langle B,L'\rangle$ and $\dim(E_i\cap B)=1$. With this, 
  $\dim(E_i \cap L_j)\ge n-5=2$ yields the existence of a 
  point $Q_{i,j}\notin B$ with $Q_{i,j}\le E_i\cap L_j$, so 
  that $L_j=\langle B,Q_{i,j}\rangle$, i.e., $L_j\le K$. However, this implies that $\cU'$ is not spanning, which is a contradiction with Lemma~\ref{lemma_spanning}}.
  

  For $n=6$ we have $\dim(B)=1$, i.e., $B$ is a point. Since $\cU$ is tight we have $B\not\le L'$. W.l.o.g.\ we assume $B\not\le E_1$. \rev{Let $S:=\langle E_1,B\rangle$, so that $\dim(S)=4$. Since $E_1$ intersects each of the lines 
  $L_j$ in at least a point not equal to $B$, we have $L_j\le S$ for all $1\le j\le 4$. Let $1\le h\le 4$ be a suitable index with $L_h\neq L'$ and let $Q\le E_2\cap L_h$ be a point, so that $Q\not\le L'$. Thus $E_2=\langle L',Q\rangle\le S$, so that $\langle \cU'\rangle\le S$, 
  i.e., $\cU'$ is not spanning, which is a contradiction with Lemma~\ref{lemma_spanning}.} 
\end{proof}
  
\begin{proposition}
  \label{prop_tiavsp_pg_5_2}
  Let $\cU$ be a tight irreducible {\avsp} of $\PG(5,2)$, then $\cU$ has one of the following types:
  \begin{itemize}
    \item $4^2 3^i 2^{8-2i}$ for $i\in\{0,1,2\}$;
    \item $4^1 3^i 2^{12-2i}$ for $i\in\{0,\dots,6\}\backslash\{5\}$; and
    \item $3^i 2^{16-2i}$ for $i\in\{0,\dots,8\}\backslash\{7\}$.
  \end{itemize}
  All types are realizable.
\end{proposition}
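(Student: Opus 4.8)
The plan is to combine the necessary conditions already established with explicit ILP-based realizations, exactly as the paper has set up for $n=4,5$. First I would settle the \emph{necessary} direction. By Corollary~\ref{cor_m_n_minus_1_zero} and Lemma~\ref{lemma_m_1_zero} we have $m_5=m_1=0$, so the type is $4^{m_4}3^{m_3}2^{m_2}$ with $m_2+2m_3+4m_4 = 2^5 = 32$. Corollary~\ref{cor_n_minus_2_ub} gives $m_4\le 2$, which splits the analysis into the three listed families according to $m_4\in\{0,1,2\}$. Within each family $i=m_3$ is free subject to $m_2 = 2^{5} - 4m_4 - 2i \ge 0$, i.e. $i\le 8-2m_4$; this yields the ranges $i\in\{0,\dots,8\}$ for $m_4=0$, $i\in\{0,\dots,6\}$ for $m_4=1$, $i\in\{0,\dots,4\}$ for $m_4=2$, of which the stated proposition keeps only those that are realizable and explicitly excludes some.

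Next I would rule out the missing values. The excluded exponents are $i=7$ in the family $m_4=0$ (type $3^7 2^2$), $i=5$ in the family $m_4=1$ (type $4^1 3^5 2^2$), and the whole family $m_4=2$, $i=4$ would be type $4^2 3^4 2^0$. The cases with a tail of size $2$, namely $3^7 2^2$ and $4^1 3^5 2^2$, are excluded by Corollary~\ref{corollary_no_tail_of_size_q} (with $q=2$, $m_l=2$ is impossible unless $l=1$ and $k_1=n-1$, which is not the case here). For the remaining excluded type I would invoke Lemma~\ref{lemma_no_type_4_2_3_4}: with $n=6$ it forbids type $(n-2)^2(n-3)^4 = 4^2 3^4$ in $\PG(5,2)$, which is precisely the $m_4=2$, $i=4$ case. (One should double-check that $4^2 3^4$ with $m_2=0$ matches the hypothesis of Lemma~\ref{lemma_no_type_4_2_3_4}; it does, since $4\cdot2 + 2\cdot4 = 16 = 2^4$, so $m_2 = 32 - 16 - 16 = 0$.) This disposes of all non-realizable types.

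Finally, for the \emph{realizability} direction I would exhibit, for each of the surviving types, an explicit tight irreducible {\avsp} of $\PG(5,2)$. The natural approach, consistent with the rest of Section~\ref{sec_classification_ti_avsp}, is to set up the integer linear program of Section~\ref{sec_ilp} with the target type and the tightness and spanning constraints (using Lemma~\ref{lemma_spanning}, Lemma~\ref{lemma_partition_packing}, and the divisibility constraints from Lemma~\ref{lemma_divisible_sets_of_k_spaces}), solve it for each of the $3 + 6 + 5 = 14$ admissible types, and verify tightness and irreducibility a posteriori by the efficient checks described after Lemma~\ref{lemma_lb_tight_avsp} (irreducibility reduces to computing affine closures of all pairs). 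One can also build many of these by the lifting construction (the lemma before the definition of tightness) applied to the $\PG(4,2)$ examples $E1$--$E9$, then perturbing to restore tightness; but the cleanest write-up simply records one representative per type.

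The main obstacle is the realizability step: it is the part that genuinely requires either a computer search or a sufficiently clever parametric family, and one must be careful that the exhibited examples are simultaneously of the prescribed type, tight (trivial intersection of all $U_i$), and irreducible (no proper sub-collection joins to a subspace). The necessary-direction bookkeeping is routine given the cited lemmas; the only subtle point there is making sure Lemma~\ref{lemma_no_type_4_2_3_4} and Corollary~\ref{corollary_no_tail_of_size_q} between them cover exactly the three excluded types and nothing that should survive.
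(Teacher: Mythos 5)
Your overall route is exactly the paper's: conclude $m_5=m_1=0$ from Corollary~\ref{cor_m_n_minus_1_zero} and Lemma~\ref{lemma_m_1_zero}, use the packing condition, bound $m_4\le 2$ via Corollary~\ref{cor_n_minus_2_ub}, kill $4^2 3^4$ via Lemma~\ref{lemma_no_type_4_2_3_4}, kill tails of size $2$ via Corollary~\ref{corollary_no_tail_of_size_q}, and realize the surviving types by ILP search. However, as written your case analysis has a concrete gap: your list of excluded types is incomplete. First, a bookkeeping slip: since an $i$-dimensional element covers $2^{i-1}$ affine points, the packing condition is $2m_2+4m_3+8m_4=2^5=32$, i.e.\ $m_2+2m_3+4m_4=16$ (not $=32$ as you wrote); your ranges $i\le 8-2m_4$ implicitly use the correct constant $16$. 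In the family $m_4=2$, nonnegativity allows $i\in\{0,\dots,4\}$ while the proposition keeps only $i\in\{0,1,2\}$, so \emph{two} values must be ruled out there: $i=4$ (type $4^2 3^4$, which you handle) and $i=3$ (type $4^2 3^3 2^2$), which your proposal never addresses. Left untreated, the necessary direction fails to match the statement. The fix is immediate and is precisely how the paper phrases it: Corollary~\ref{corollary_no_tail_of_size_q} gives the uniform exclusion $m_2\neq 2$, which covers all three types with $m_2=2$, namely $3^7 2^2$, $4^1 3^5 2^2$, and $4^2 3^3 2^2$; your formulation only applied it to the first two.

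Two smaller points. The count of admissible types is $3+6+8=17$, not $3+6+5=14$ (the family $m_4=0$ retains eight values of $i$). And your alternative realizability idea via the lifting lemma cannot work as stated: every lifted element $\langle U_i,P\rangle$ contains the apex $P$, so the cone over a $\PG(4,2)$ example is never tight, and ``perturbing to restore tightness'' is not an argument; the ILP (or any explicit list of representatives, as the paper provides implicitly through its computations) is what actually establishes realizability, and on that point you coincide with the paper.
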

\begin{proof}
  Let the type of $\cU$ be $5^{m_5}\dots 1^{m_1}$. From Corollary~\ref{cor_m_n_minus_1_zero} and Lemma~\ref{lemma_m_1_zero} we conclude $m_5=0$ and $m_1=0$, so that the 
  packing condition (\ref{eq_packing_condition}) gives $4m_4+2m_3+m_2=16$. Corollary~\ref{cor_n_minus_2_ub} gives $m_4\le 2$ and Lemma~\ref{lemma_no_type_4_2_3_4} 
  excludes $\left(m_4,m_3,m_2\right)=(2,4,0)$. Moreover, Corollary~\ref{corollary_no_tail_of_size_q} implies $m_2\neq 2$. All remaining possibilities 
  $\left(m_4,m_3,m_2\right)\in\mathbb{N}_0^3$ are listed in the statement and for each type we found a realization using ILP computations. 
\end{proof}  
\begin{corollary}
  If $\cU$ is a tight irreducible {\avsp} of $\PG(5,2)$ of minimum possible size, then $\#\cU=7$ and $\cU$ has type $4^1 3^6$.
\end{corollary}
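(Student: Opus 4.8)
The plan is to minimize the size $\#\cU = \sum_i m_i$ over the types permitted by Proposition~\ref{prop_tiavsp_pg_5_2}, and then argue that the extremal type $4^1 3^6$ is actually realizable as a \emph{tight irreducible} {\avsp} (realizability of all listed types being already asserted in that proposition).

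First I would read off from Proposition~\ref{prop_tiavsp_pg_5_2} that every tight irreducible {\avsp} of $\PG(5,2)$ has type $4^{m_4} 3^{m_3} 2^{m_2}$ with $4m_4 + 2m_3 + m_2 = 16$ and $m_4 \le 2$, hence size $m_4 + m_3 + m_2 = 16 - 3m_4 - m_2$. To make this small we want $m_4$ and $m_2$ large, but the constraint forces a trade-off: given $m_4$, the size is $16 - 3m_4 - m_2$ where $m_2 = 16 - 4m_4 - 2m_3 \ge 0$, so $\#\cU = 16 - 3m_4 - (16 - 4m_4 - 2m_3) = m_4 + 2m_3$. Thus minimizing the size means minimizing $m_4 + 2m_3$ subject to feasibility. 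The cases are: $m_4 = 2$ gives size $2 + 2m_3$ with $m_3 \in \{0,1,2\}$ (excluding the forbidden $m_3$ values from the proposition), so the smallest is $m_3 = 0$, size $2$ — but wait, type $4^2 2^8$ has size $10$; I must recompute: size is $m_4 + m_3 + m_2$, not $m_4 + 2m_3$. Let me redo this: with $4m_4 + 2m_3 + m_2 = 16$, we have $\#\cU = m_4 + m_3 + m_2 = m_4 + m_3 + (16 - 4m_4 - 2m_3) = 16 - 3m_4 - m_3$. So the size is $16 - 3m_4 - m_3$, and we minimize by taking $m_4$ as large as possible and then $m_3$ as large as possible. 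With $m_4 = 2$: the allowed types are $4^2 3^i 2^{8-2i}$, $i \in \{0,1,2\}$, giving sizes $16 - 6 - i = 10 - i$, minimized at $i = 2$: type $4^2 3^2 2^4$, size $8$. With $m_4 = 1$: allowed types $4^1 3^i 2^{12-2i}$, $i \in \{0,\dots,6\}\setminus\{5\}$, sizes $16 - 3 - i = 13 - i$, minimized at $i = 6$: type $4^1 3^6$, size $7$. With $m_4 = 0$: allowed types $3^i 2^{16-2i}$, $i \in \{0,\dots,8\}\setminus\{7\}$, sizes $16 - i$, minimized at $i = 8$: type $3^8$, size $8$. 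Comparing, the unique minimum is $7$, attained only by type $4^1 3^6$.

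Next I would conclude: since Proposition~\ref{prop_tiavsp_pg_5_2} asserts all the listed types are realizable, in particular type $4^1 3^6$ is realized by some tight irreducible {\avsp}, so the bound $\#\cU = 7$ is attained and the minimum is exactly $7$, achieved precisely by type $4^1 3^6$ and by no other type. The uniqueness of the type follows because every other feasible type has size strictly greater than $7$, as computed above.

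The main (and essentially only) obstacle is bookkeeping: one must be careful with the formula $\#\cU = 16 - 3m_4 - m_3$ and verify that the type $4^1 3^6$ is not among the types excluded in Proposition~\ref{prop_tiavsp_pg_5_2} (the excluded index for $m_4 = 1$ is $i = 5$, not $i = 6$, so $4^1 3^6$ survives). No further geometric argument is needed, since realizability is quoted from the preceding proposition.
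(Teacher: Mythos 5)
Your argument is correct and is exactly the intended one: the corollary is an immediate consequence of Proposition~\ref{prop_tiavsp_pg_5_2}, obtained by computing $\#\cU = m_4+m_3+m_2 = 16-3m_4-m_3$ from the packing condition, checking the three families of admissible types to see that the minimum value $7$ occurs only for $4^1 3^6$ (sizes $10-i\ge 8$, $13-i\ge 7$, $16-i\ge 8$ respectively), and quoting realizability from the proposition. The mid-proof recomputation does not affect correctness since your final formula and case analysis are right, including the check that $i=6$ is not among the excluded indices for $m_4=1$.
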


For small sizes we have enumerated the \rev{isomorphy} 
types of tight irreducible {\avsp}s in $\PG(5,2)$, see Table~\ref{table_isomorphism_types_pg_5_2}. The last row concerns the parts 
$\cU'$ at the hyperplane $H_\infty$ at infinity w.r.t.\ the {\avsp}s $\cU$ counted up to isomorphy in the second row. So, for e.g.\ types 
$4^1 3^4 2^4$ and $4^2 2^8$ there exist configurations $\cU'$ that allow more than one extension up to symmetry. 

\begin{table}[htp]
  \begin{center}
    \begin{tabular}{l|l|ll|ll|ll}
      \hline 
      type & $4^1 3^6$ & $4^2 3^2 2^4$ & $3^8$ & $4^2 3^1 2^6$ & $4^1 3^4 2^4$ & $4^2 2^8$ & $4^1 3^3 2^6$ \\ 
      \#   & 6         & 38            & 32    & 55            & 827           & 83        & 8096 \\
      in $H_\infty$ & 6& 38            & 32    & 55            & 811           & 50        & 6686 \\
      \hline
    \end{tabular}
    \caption{Number of isomorphism types of tight irreducible {\avsp}s in $\PG(5,2)$.}
    \label{table_isomorphism_types_pg_5_2}
  \end{center}
\end{table}

For the minimum possible size of a tight irreducible {\avsp} in $\PG(5,2)$ we can write down all implications of 
the stated necessary conditions for the part $\cU'$ at infinity. So, for type $4^1 3^6$ configuration $\cU'$ consists of one plane $E$ 
and six lines $\mathcal{L}=\left\{L_1, \dots,L_6\right\}$ satisfying the following conditions:
\begin{enumerate}
  \item[(1)] the configuration is spanning, i.e., $\left\langle E,L_1,\dots,L_6\right\rangle=\PG(4,2)$;\\[-6mm]
  \item[(2)] the configuration is tight, i.e., there does not exist a point $P$ that is contained in $E$ and all lines in $\mathcal{L}$;\\[-6mm]
  \item[(3)] the lines in $\mathcal{L}$ form a $2$-divisible set of lines, i.e., each hyperplane contains an even number of lines;\\[-6mm]
  \item[(4)] each line $L_i$ intersects $E$ in at least a point;\\[-6mm]
  \item[(5)] hyperplanes that contain $E$ also contain at least two lines.
\end{enumerate}  
Up to symmetry ten such configurations exist:
  \begin{enumerate} \scriptsize
    \item[E1:] 
$\langle 10000,01000,00100\rangle$,  
$\langle 10000, 01000\rangle$, 
$\langle 10000, 00100\rangle$, 
$\langle 01000, 00010\rangle$, 
$\langle 01000, 00110\rangle$, 
$\langle 10100, 00001\rangle$, 
$\langle 10100, 01101\rangle$\\[-5mm]
    \item[E2:]
$\langle 10000, 01000, 00100\rangle$, 
$\langle 10000, 01000\rangle$, 
$\langle 10000, 00010\rangle$, 
$\langle 10000, 00110\rangle$, 
$\langle 01100, 00010\rangle$, 
$\langle 01100, 00001\rangle$, 
$\langle 10011, 01100\rangle$\\[-5mm]    
    \item[E3:]
$\langle 10000, 01000, 00100\rangle$, 
$\langle 10000, 01000\rangle$,
$\langle 10000, 00010\rangle$, 
$\langle 10000, 00001\rangle$, 
$\langle 01000, 00011\rangle$, 
$\langle 10100, 01011\rangle$, 
$\langle 01011, 00111\rangle$\\[-5mm]    
    \item[E4:]
$\langle 10000, 01000, 00100\rangle$, 
$\langle 10000, 01000\rangle$,
$\langle 10000, 00010\rangle$, 
$\langle 01000, 00110\rangle$, 
$\langle 00100, 00010\rangle$, 
$\langle 11100, 00001\rangle$, 
$\langle 10111, 01011\rangle$\\[-5mm]    
    \item[E5:]
$\langle 10000, 01000, 00100\rangle$, 
$\langle 10000, 00010\rangle$, 
$\langle 10000, 01010\rangle$, 
$\langle 00100, 00001\rangle$, 
$\langle 01100, 00011\rangle$, 
$\langle 11001, 00100\rangle$, 
$\langle 10111, 01100\rangle$\\[-5mm] 
    \item[E6:]
$\langle 10000, 01000, 00100\rangle$, 
$\langle 10000, 00010\rangle$, 
$\langle 10000, 00001\rangle$, 
$\langle 10000, 01011\rangle$, 
$\langle 01000, 00010\rangle$, 
$\langle 01000, 00001\rangle$, 
$\langle 10011, 01000\rangle$\\[-5mm] 
    \item[E7:]
$\langle 10000, 01000, 00100\rangle$, 
$\langle 10000, 00010\rangle$, 
$\langle 10000, 00001\rangle$, 
$\langle 10000, 01011\rangle$, 
$\langle 01000, 00010\rangle$, 
$\langle 01000, 00101\rangle$, 
$\langle 10111, 01000\rangle$\\[-5mm] 
    \item[E8:]
$\langle 10000, 01000, 00100\rangle$, 
$\langle 10000, 00010\rangle$, 
$\langle 10000, 00001\rangle$, 
$\langle 01000, 00010\rangle$, 
$\langle 01000, 00001\rangle$, 
$\langle 10100, 01111\rangle$, 
$\langle 10111, 01100\rangle$\\[-5mm]    
    \item[E9:]
$\langle 10000, 01000, 00100\rangle$, 
$\langle 10000, 00010\rangle$, 
$\langle 10000, 00001\rangle$, 
$\langle 01000, 00110\rangle$, 
$\langle 01000, 00101\rangle$, 
$\langle 10100, 01111\rangle$, 
$\langle 10111, 01100\rangle$\\[-5mm]    
    \item[E10:]
$\langle 10000, 01000, 00100\rangle$, 
$\langle 10000, 00010\rangle$, 
$\langle 01000, 00010\rangle$, 
$\langle 00100, 00001\rangle$, 
$\langle 10100, 01011\rangle$, 
$\langle 11001, 00101\rangle$, 
$\langle 10011, 01100\rangle$     
\end{enumerate} 
It turns out that E2, E4, E7, and E9 are not extendable to an {\avsp} while the other six cases are. Moreover, the extension is unique up to symmetry in these cases.

\subsection{Tight irreducible {\avsp}s in \texorpdfstring{$\mathbf{\PG(6,2)}$}{PG(6,2)}}
\label{subsec_pg_6_2}
\begin{lemma}
  \label{lemma_exclusion_avsp_6_2}
  In $\PG(6,2)$ no tight irreducible {\avsp} of type $5^2 4^2 3^4$ or $5^1 4^6$ exists.
\end{lemma}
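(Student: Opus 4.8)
The plan is to argue by contradiction in each case, working in $\PG(6,2)$ (so $n=7$ and $H_\infty$ is a $6$-space $\cong\PG(5,2)$, with $64$ affine points), and to use the divisible-tail machinery to pin the configuration of the parts at infinity down to a short finite list, each entry of which is then killed by irreducibility together with tightness. The common starting point is the tail. In the type $5^24^23^4$ case the tail $\cT=\{T_1,\dots,T_4\}$ consists of the four $3$-spaces, and $\cT'=\{T_i\cap H_\infty\}$ is a $2$-divisible set of four lines by Lemma~\ref{lemma_divisible_sets_of_k_spaces}; hence by Lemma~\ref{lemma_tail_4} (with $k=2$) the lines $T_i\cap H_\infty$ all pass through a common point $B$ and span a $4$-space, so $B\le T_i$ for every $i$. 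In the type $5^14^6$ case the tail is the set of six $4$-spaces, so $\cT'=\{V_i\cap H_\infty\}$ is a $2$-divisible set of six $3$-spaces; there is no four-element tail here, so the structural input must instead come from the single $5$-space.

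For type $5^24^23^4$, write $\cU=\{U_1',U_2'\}\cup\{V_1,V_2\}\cup\{T_1,\dots,T_4\}$ and put $F_i=U_i'\cap H_\infty$. By Lemma~\ref{lemma_dimension_condition}, $F_1\cap F_2=U_1'\cap U_2'$ has dimension $\ge 3$; if $F_1=F_2$, or if the affine parts of $U_1'$ and $U_2'$ lie in a common coset of $\langle F_1,F_2\rangle$, then $\{U_1',U_2'\}$ is already an {\avsp} of a $6$-space, contradicting irreducibility. Hence $\dim(F_1\cap F_2)=3$, $\langle U_1',U_2'\rangle=\PG(6,2)$, and a short coset computation in $\F_2^7/H_\infty\cong\F_2^6$ shows that the $32$ affine points not covered by $U_1'\cup U_2'$ form the disjoint union of two affine $4$-spaces $D_1,D_2$, with $D_i$ parallel to $U_i'$ and lying in the two distinct cosets of the $5$-space $\langle F_1,F_2\rangle$. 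The six elements $V_1,V_2,T_1,\dots,T_4$ then partition $D_1\sqcup D_2$, and each one either lies inside a single $D_i$ (and is then forced to be parallel to $U_i'$) or straddles both, contributing exactly half of its affine points to each. Counting points in $D_1$ and $D_2$ (both have $16$), using that the elements lying inside one $D_i$ cannot by themselves partition it (else $\cU$ is reducible, so at most one $V$-type and limited $T$-types sit inside each $D_i$), and finally combining this with the facts that all four $T_i$ contain $B$ while tightness forces one of $U_1',U_2',V_1,V_2$ to miss $B$ and that $\{T_i\cap H_\infty\}$ is $2$-divisible, leaves only a handful of candidate patterns, each of which is ruled out by a direct incidence check (equivalently, by checking that the corresponding set $\cU'=\{U\cap H_\infty:U\in\cU\}$ at infinity admits no extension, in the style of Lemma~\ref{lemma_no_type_4_2_3_4}).

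For type $5^14^6$, write $\cU=\{U_0\}\cup\{V_1,\dots,V_6\}$ and put $E_0=U_0\cap H_\infty$ ($4$-dimensional) and $G_i=V_i\cap H_\infty$ ($3$-dimensional). The $16$ affine points of $U_0$ form one of the four cosets of $E_0$ in $\F_2^6$, so $V_1,\dots,V_6$ partition the remaining three cosets $A_1,A_2,A_3$, three parallel affine $4$-spaces of $16$ points each. By Lemma~\ref{lemma_dimension_condition}, $\dim(G_i\cap E_0)\ge 2$, so each $V_i$ is either inside one $A_k$ (when $G_i\le E_0$) or straddles exactly two of them ($4+4$ points); again an $A_k$ covered only by inside-elements gives reducibility, so at most one $V_i$ sits inside each $A_k$, leaving at least three straddlers. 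Reading the $2$-divisibility of $\cT'=\{G_1,\dots,G_6\}$ modulo $E_0$ in $H_\infty/E_0\cong\F_2^2$ imposes parity conditions on the three possible straddler-types, and with the point count in each $A_k$ this leaves only four numerically consistent patterns; each is excluded using the finer geometry --- that two straddlers of the same type whose affine parts meet a common coset of $\langle E_0,(\text{their direction})\rangle$ would form an {\avsp} of a $5$-space, and that $2$-divisibility must hold for all $63$ hyperplanes of $H_\infty$, not just the three through $E_0$ --- or, equivalently, by a short exhaustive check of the shape of $\cU'$.

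The main obstacle is the last step in both cases: the point-counting and divisibility reductions only narrow things to a small finite list, and eliminating each surviving candidate requires exploiting the precise incidence pattern (which hyperplanes of $H_\infty$ contain which parts of the split elements, where the common point $B$ lies, and which element tightness forces to avoid $B$) rather than just cardinalities. I expect this to be the bulk of the work, and the cleanest way to present it is probably to phrase the residual cases in terms of the extension problem for $\cU'$ in $\PG(5,2)$ and dispatch them by the same type of analysis used for the $\PG(5,2)$ subsection and Lemma~\ref{lemma_no_type_4_2_3_4}.
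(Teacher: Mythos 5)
Your structural reductions are largely sound as far as they go: for type $5^2 4^2 3^4$ the application of Lemma~\ref{lemma_divisible_sets_of_k_spaces} and Lemma~\ref{lemma_tail_4} to the four planes, the forced $\dim(F_1\cap F_2)=3$, and the fact that the uncovered $32$ affine points split into two affine $4$-flats $D_1,D_2$ (each $D_i$ being the other coset of $F_i$ inside the relevant coset of $\langle F_1,F_2\rangle$) all check out, as does the inside/straddler dichotomy for the six $4$-spaces relative to the cosets of $E_0$ in the $5^1 4^6$ case. But the lemma's actual content --- nonexistence --- is never established. In both cases you end with "leaves only a handful of candidate patterns, each of which is ruled out by a direct incidence check" resp.\ "a short exhaustive check of the shape of $\cU'$", without deriving the list of residual patterns (the claims "a handful" and "four numerically consistent patterns" are asserted, not proved) and without performing a single one of the eliminations. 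That elimination is precisely where the difficulty lies: for these two types the configurations at infinity satisfying all the stated necessary conditions (spanning, $2$-divisibility, dimension condition, the partition-packing constraints) are not obviously empty, and what has to be shown is that none of them solves the extension problem. Deferring that to an unspecified finite check is a promissory note, not a proof.

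For comparison, the paper does not give a theoretical argument at all: Lemma~\ref{lemma_exclusion_avsp_6_2} is proved by excluding both types with ILP computations (see Section~\ref{sec_ilp}) and independently with \texttt{GAP}. So your plan, if completed, would be a genuine improvement in the spirit of Lemma~\ref{lemma_exclusion_6_1_5_4_4_4} (where a similar tail-plus-decomposition analysis is pushed further before a residual computational check), but as written it neither carries out the case analysis by hand nor replaces it with an explicit computation, so the proof has a gap exactly at the step that makes the statement true.
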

\begin{proof}
  All two 
  possibilities are excluded using ILP computations, see Section~\ref{sec_ilp}. They are also excluded using \texttt{GAP} computations. 
\end{proof}

\begin{proposition}
  \label{prop_tiavsp_pg_6_2}
  Let $\cU$ be a tight irreducible {\avsp} of $\PG(6,2)$, then $\cU$ has one of the following types:
  \begin{itemize}
    \item $5^2 4^i 3^j 2^{16-2j-4i}$ for $i\in\{0,1,2\}$ and $0\le j\le 8-2i$, where $j+2i \neq 7$ and $(i,j)\neq (2,4)$;
    \item $5^1 4^i 3^j 2^{24-2j-4i}$ for $0\le i\le 4$ and $0\le j\le 12-2i$, where $j+2i \neq 11$;
    \item $4^i 3^j 2^{32-2j-4i}$ for $0\le i\le 8$ and $0\le j\le 16-2i$, where $j+2i \neq 15$ and $i\neq 7$.
  \end{itemize}
  All types are realizable.
\end{proposition}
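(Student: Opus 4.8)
The plan is to follow the same template as the classification of {\avsp}s in $\PG(5,2)$ (Proposition~\ref{prop_tiavsp_pg_5_2}): first pin down the finite list of numerically admissible types, then discard those that cannot occur by a short chain of exclusion arguments, and finally certify realizability of the survivors by the computer search of Section~\ref{sec_ilp}. Throughout, $n=7$, so $H_\infty$ is a hyperplane of (algebraic) dimension $6$ in $\PG(6,2)$, and the type is written $5^{m_5}4^{m_4}3^{m_3}2^{m_2}1^{m_1}$.

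First I would assemble the unconditional necessary conditions. Corollary~\ref{cor_m_n_minus_1_zero} and Lemma~\ref{lemma_m_1_zero} give $m_6=m_1=0$, so the packing condition~(\ref{eq_packing_condition}) reads $16m_5+8m_4+4m_3+2m_2=64$; Corollary~\ref{cor_n_minus_2_ub} gives $m_5\le 2$; and Corollary~\ref{corollary_no_tail_of_size_q} forbids the smallest occurring dimension from having multiplicity exactly $2$ (the exceptional all-hyperplane case being ruled out by $m_6=0$). These constraints reduce the candidates to a finite list which, organized by $m_5\in\{0,1,2\}$, consists of the three families of the statement \emph{together with} a handful of spurious types.

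The heart of the argument is the elimination of those spurious types. The type $5^2 4^4$ is the $n=7$ instance of $(n-2)^2(n-3)^4$, excluded by Lemma~\ref{lemma_no_type_4_2_3_4}, while $5^2 4^2 3^4$ and $5^1 4^6$ are excluded by Lemma~\ref{lemma_exclusion_avsp_6_2}. What remains are exactly $4^7 2^4$, $5^1 4^5 2^4$ and $5^2 4^3 2^4$, i.e.\ the admissible types whose tail consists of four lines and whose second-smallest dimension is $4$. For each of these I would argue as follows. By Lemma~\ref{lemma_divisible_sets_of_k_spaces} the set $\cT'$ of the four intersections of the tail lines with $H_\infty$ is a $4$-divisible multiset of four points of $H_\infty\cong\PG(5,2)$; if two of these coincided, Lemma~\ref{lemma_irreducible_no_multiset} applied to the two corresponding lines would force them to be hyperplanes, which is absurd, so $\cT'$ is a set of four distinct points. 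Being in particular $2$-divisible, $\cT'$ is by the $k=1$ case of Lemma~\ref{lemma_tail_4} the complement $E\setminus L$ of a line $L$ inside a plane $E\le H_\infty$; but any line $\ell\neq L$ of $E$ extends to a hyperplane $K$ of $\PG(5,2)$ with $K\cap E=\ell$, so that $\#(K\cap\cT')=\#(\ell\setminus L)=2\not\equiv 0\pmod{4}$, contradicting the $4$-divisibility of $\cT'$. Hence these three types cannot occur, and a direct pass over the finite list then shows that every admissible type not killed above is exactly one of those in the statement.

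For realizability I would exhibit, for each listed type, an explicit tight irreducible {\avsp} of $\PG(6,2)$ produced by the integer linear program of Section~\ref{sec_ilp}, recalling that tightness and irreducibility of a concrete set of subspaces are checked directly. The step I expect to be the main obstacle is completeness of the exclusion phase: one has to verify carefully that the numerical sieve leaves exactly the six non-listed types handled above and nothing more, the $2^4$-tail argument being the delicate piece; the remainder reduces either to lemmas already proven or to the (large but routine) realizability computation, which for $\PG(6,2)$ ranges over more than a hundred types.
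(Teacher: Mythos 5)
Your proposal is correct and follows essentially the same route as the paper's proof: the same numerical sieve ($m_6=m_1=0$, packing, $m_5\le 2$, no tail of size $2$), the same appeals to Lemma~\ref{lemma_no_type_4_2_3_4} and Lemma~\ref{lemma_exclusion_avsp_6_2}, the same $4$-divisibility-of-the-four-point-tail argument for $5^2 4^3 2^4$, $5^1 4^5 2^4$, $4^7 2^4$, and ILP computations for realizability. Your explicit hyperplane computation showing the affine plane of Lemma~\ref{lemma_tail_4} is not $4$-divisible, together with the distinctness of the four points via Lemma~\ref{lemma_irreducible_no_multiset}, merely fills in what the paper dismisses as "easily checked."
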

\begin{proof}
  Let the type of $\cU$ be $6^{m_6}\dots 1^{m_1}$. From Corollary~\ref{cor_m_n_minus_1_zero} and Lemma~\ref{lemma_m_1_zero} we conclude $m_6=0$ and $m_1=0$, so that the packing 
  condition (\ref{eq_packing_condition}) gives $8m_5+4m_4+2m_3+m_2=32$. Corollary~\ref{cor_n_minus_2_ub} yields $m_5\le 2$ and Lemma~\ref{lemma_no_type_4_2_3_4} 
  excludes $\left(m_5,m_4,m_3,m_2\right)=(2,4,0,0)$. Moreover, Corollary~\ref{corollary_no_tail_of_size_q} implies $m_l\neq 2$ for the smallest index with $m_l>0$, \rev{which excludes the types $5^2 4^j 3^i 2^2$ with $j+2i=7$, $5^2 4^3 2^2$, $5^1 4^j 3^i 2^2$ with $j+2i=11$, $5^1 4^5 3^2$, $4^j 3^i 2^2$ with $j+2i=15$, and $4^7 3^2$}. The two 
  hypothetical types $5^2 4^2 3^4$ and  $5^1 4^6$ are excluded in Lemma~\ref{lemma_exclusion_avsp_6_2}. \rev{For the three hypothetical types $5^2 4^3 2^4$, $5^1 4^5 2^4$, and $4^7 2^4$ we apply Lemma~\ref{lemma_divisible_sets_of_k_spaces} to conclude that the set of $2$-spaces is $4$-divisible. However, Lemma~\ref{lemma_tail_4} characterizes the $2$-divisible sets of cardinality $4$ and we can easily check that it is not $4$-divisible.} All remaining possibilities $\left(m_5,m_4,m_3,m_2\right)\in\N_0^4$ 
  are listed in the statement and for each type we found a realization using an ILP formulation, see Section~\ref{sec_ilp}. 
\end{proof}
  
\begin{corollary}
  If $\cU$ is a tight irreducible {\avsp} of $\PG(6,2)$ of minimum possible size, then $\#\cU=8$ and $\cU$ has type $4^8$.
\end{corollary}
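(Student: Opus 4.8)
The plan is to combine the classification of feasible types from Proposition~\ref{prop_tiavsp_pg_6_2} with the packing condition~(\ref{eq_packing_condition}) to see that size $8$ is a lower bound and is achieved only by type $4^8$, and then to exhibit such an {\avsp}. First I would note that for an {\avsp} $\cU$ of $\PG(6,2)$ of type $5^{m_5}4^{m_4}3^{m_3}2^{m_2}$ (we already know $m_6=m_1=0$ by Corollary~\ref{cor_m_n_minus_1_zero} and Lemma~\ref{lemma_m_1_zero}), the packing condition reads $8m_5+4m_4+2m_3+m_2=32$, while the size is $\#\cU=m_5+m_4+m_3+m_2$. To minimise $\#\cU$ we want the coefficients in the packing condition to be as large as possible relative to $1$; since the largest coefficient available is $8$ (as $m_6=0$), we get the bound $\#\cU \ge 32/8 = 4$ only if all parts are solids, but $m_5\le 2$ by Corollary~\ref{cor_n_minus_2_ub}, so that regime is impossible. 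So I would instead argue directly: with $m_5\le 2$ fixed, the remaining budget $32-8m_5$ must be covered by parts of dimension at most $4$, each contributing at most $4$ to the packing sum, hence $m_4+m_3+m_2 \ge (32-8m_5)/4 = 8-2m_5$, and therefore $\#\cU \ge m_5 + (8-2m_5) = 8-m_5 \ge 6$. This only gives $\#\cU\ge 6$, so a finer argument is needed to push the bound up to $8$.

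The key step is to rule out sizes $6$ and $7$ using the type classification. Equality $\#\cU = 8-m_5$ forces every part below dimension $5$ to be a solid ($4$-space), i.e. $m_3=m_2=0$; so size $6$ would require type $5^2 4^6$ and size $7$ would require type $5^1 4^7$. Neither of these appears in the list of realizable types in Proposition~\ref{prop_tiavsp_pg_6_2}: for $5^2 4^6$ we would need $i=6$ in the first family, which violates $i\le 2$; for $5^1 4^7$ we would need $i=7$ in the second family, which violates $i\le 4$. If $\#\cU$ equals $9-m_5$ through $7-m_5+1$... more carefully: I would enumerate, for $\#\cU\in\{6,7\}$, all $(m_5,m_4,m_3,m_2)$ with $m_5\le 2$ satisfying both the packing equation and $m_5+m_4+m_3+m_2=\#\cU$, and check each against the feasibility constraints of Proposition~\ref{prop_tiavsp_pg_6_2} (namely $m_5\le 2$, $m_4\le$ the stated bounds, the congruence-type exclusions $j+2i\neq 7,11,15$, and $(i,j)\neq(2,4)$, $i\neq 7$, together with the tail constraint $m_2\neq 2$ from Corollary~\ref{corollary_no_tail_of_size_q} and the exclusions in Lemma~\ref{lemma_exclusion_avsp_6_2}). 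The point is that every candidate of size $6$ or $7$ is excluded, so $\#\cU\ge 8$, and the only type of size exactly $8$ that survives is $4^8$ (any type of size $8$ with a solid replaced by lower-dimensional parts forces $m_2\ge 2$ and then either violates $m_2\neq2$ or increases the size).

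Finally I would establish that type $4^8$ is realizable, so that the minimum $8$ is attained — this is exactly the eight-solids-on-the-Klein-quadric construction advertised in the introduction (Subsection~\ref{subsec_klein_quadric}), and Proposition~\ref{prop_tiavsp_pg_6_2} already asserts all listed types, including $4^8$ (the case $i=8,j=0$ of the third family), are realizable. The main obstacle is the bookkeeping in the second step: one must be careful that the combination of the packing equation with $m_5\le 2$ genuinely leaves no admissible $(m_5,m_4,m_3,m_2)$ of size $6$ or $7$ once all the exclusion conditions from Corollaries~\ref{cor_n_minus_2_ub} and \ref{corollary_no_tail_of_size_q} and Lemmas~\ref{lemma_no_type_4_2_3_4} and~\ref{lemma_exclusion_avsp_6_2} are imposed. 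This is a finite check, so it is routine, but it is the crux of the argument; everything else (the size-$8\Rightarrow$ type $4^8$ implication and realizability) is immediate from the preceding results.
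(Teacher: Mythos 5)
Your overall route is the same as the paper's: the corollary is read off directly from the complete classification in Proposition~\ref{prop_tiavsp_pg_6_2} (every tight irreducible {\avsp} of $\PG(6,2)$ has one of the listed types, all of which are realizable), so one only has to minimise the size $18-3i-j$, $25-3i-j$, $32-3i-j$ over the three listed families; the minima are $9$, $9$, $8$, and size $8$ occurs only for $(i,j)=(8,0)$ in the third family, i.e.\ type $4^8$. Your fallback plan -- enumerate all $(m_5,m_4,m_3,m_2)$ with the packing sum $32$ and size $6$, $7$, or $8$ and check them against the known exclusions -- is also sound and finite, so the proposal is workable.

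However, the concrete candidates you name are wrong, and you should fix them before trusting the shortcut. For size $6$ the equality case $\#\cU=8-m_5$ with $m_5=2$ gives $m_4=4$, i.e.\ type $5^2 4^4$, not $5^2 4^6$ (which has packing sum $40$ and size $8$); it is excluded by Lemma~\ref{lemma_no_type_4_2_3_4}, not by the range $i\le 2$ per se. For size $7$ the candidates are $5^1 4^6$ (excluded by Lemma~\ref{lemma_exclusion_avsp_6_2}) \emph{and} $5^2 4^3 3^2$ (excluded by Corollary~\ref{corollary_no_tail_of_size_q}, tail of size $2$); your equality-case reasoning misses the latter because with $m_5=2$ and size $7$ one is not in the equality case, and $5^1 4^7$ again fails the packing condition. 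Likewise, the parenthetical claim that any other size-$8$ type ``forces $m_2\ge 2$'' is false: the size-$8$ candidates besides $4^8$ are $5^1 4^5 3^2$, $5^2 4^3 3^1 2^2$, and $5^2 4^2 3^4$, the first and third having $m_2=0$; they are killed by the tail condition and by Lemma~\ref{lemma_exclusion_avsp_6_2}, respectively. None of this changes the conclusion -- the clean argument is simply that Proposition~\ref{prop_tiavsp_pg_6_2} already incorporates all these exclusions, so minimising over its list suffices -- but as written your intermediate statements do not satisfy the packing condition and would not survive scrutiny.
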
  
  
Here we describe all four isomorphism types of homogeneous irreducible tight 
{\avsp}s $\cU$ of $\PG(6,2)$ of type $4^8$, \rev{where we call an {\avsp} $\cU$ homogeneous if all of its elements have the same dimension}. Geometrically each $\cU$ is given by 
eight solids $S_1,\dots,S_8$ in $\PG(6,2)$ intersecting a hyperplane $H_\infty$ in a plane (plus some extra conditions). Here we directly consider 
the part $\cU'$ at infinity, i.e.\ the eight planes $\pi_1,\dots,\pi_8\in H_\infty\cong \PG(5,2)$ given by $\pi_i=S_i\cap H_\infty$. The conditions 
for the pairwise intersections are 
\begin{equation}
  \label{cond_intersection_planes}
  1\le \dim(\pi_i\cap \pi_j)\le 2\quad\forall  1\le i<j\le 8.
\end{equation}
Since the planes form a spanning $2$-divisible set we have 
\begin{equation}
  \label{cond_incidences_planes_hyperplanes}
  \#\left\{1\le i\le 8\,:\, \pi_i\not\le H\right\}\in\{2,4,6,8\}  
\end{equation}
for every hyperplane $H$ of $H_\infty\cong\PG(5,2)$.  

Let $e_i$ denote the $i$th unit vector, i.e., the vector with a $1$ at the $i$-th position and zeros everywhere else. If the pairwise intersection of 
the planes $\pi_i$ is a line in all cases then they span a solid, which contradicts the condition that not all eight planes can be contained in a hyperplane. 
W.l.o.g.\ we assume $\pi_1=\langle e_1,e_2,e_3\rangle$ and $\pi_2=\langle e_3,e_4,e_5\rangle$, i.e., the intersection point between $\pi_1$ and $\pi_2$ is 
$\langle e_3\rangle$. Since the intersection of all eight planes is empty we assume w.l.o.g.\ that $\pi_3$ does not contain $\pi_1\cap \pi_2=e_3$. Up to 
symmetry we have the following three cases for $\pi_3$:
\begin{itemize}
  \item[(a)] $\dim(\pi_1\cap\pi_3)=2$, $\dim(\pi_2\cap\pi_3)=1$: $\pi_3=\langle e_1,e_2,e_4\rangle$;
  \item[(b)] $\dim(\pi_1\cap\pi_3)=\dim(\pi_2\cap\pi_3)=1$, $\dim(\langle \pi_1,\pi_2,\pi_3\rangle)=5$: $\pi_3=\langle e_1,e_4,e_2+e_5\rangle$; and 
  \item[(c)] $\dim(\pi_1\cap\pi_3)=\dim(\pi_2\cap\pi_3)=1$, $\dim(\langle \pi_1,\pi_2,\pi_3\rangle)=6$: $\pi_3=\langle e_1,e_4,e_6\rangle$.
\end{itemize}  
Starting from the three possibilities for $\pi_1,\pi_2,\pi_3$ we build up a graph whose vertices consist of the planes that have intersection dimension $1$ or 
$2$ with $\pi_i$ for $1\le i\le 3$, cf.\ Condition~(\ref{cond_intersection_planes}). Two vertices $\pi$ and $\pi'$ are connected by an 
edge if $1\le \dim(\pi\cap\pi')\le 2$, cf.\ Condition~(\ref{cond_intersection_planes}). For these graphs we determine all cliques of size five and check 
Condition~(\ref{cond_incidences_planes_hyperplanes}) afterwards:
 \begin{itemize}
  \item[(a)] $3{,}014{,}435{,}152$ cliques $\rightarrow$ $432$~cases;\\[-6mm] 
  \item[(b)] $2{,}198{,}293{,}872$ cliques $\rightarrow$ $0$~cases;\\[-6mm] 
  \item[(c)] $1{,}218{,}975{,}648$ cliques $\rightarrow$ $320$~cases. 
\end{itemize}
The overall computation took just a few minutes. 
Note that the constructed $752$~cases are just candidates for the extension problem to eight solids. Up to symmetry they decompose into just four non-isomorphic 
examples. It turns out that they can be distinguished by the maximum number $\gamma_0$ of incidences of a point and the eight planes, \rev{where $2\le \gamma_0\le 5$}.
In Table~\ref{table_constructions_4_8_in_pg_6_2} we summarize incidence counts, \rev{i.e., for $X\in\{\text{point, line, solid,hyperplane}\}$ the stated vector ${a_1}^{b_1}\dots{a_r}^{b_r}$ says that $b_i$ of the $X$s have exactly $a_i$ incidences with the eight planes, given the isomorphy type characterized by $\gamma_0$. The last row states how often the {\lq\lq}third plane{\rq\rq} is of type (a), (b), or (c) after fixing a pair of planes $\pi_1,\pi_2$.} 

\begin{table}[htp]
  \begin{center}
    \begin{tabular}{|l|llll|}
      \hline 
      $\gamma_0$ & 2 & 3 & 4 & 5 \\
      \hline
      point \rev{incidences:}       & $2^{28}$  & $1^{21}2^7 3^7$ & $1^{16}2^{12} 4^4$ & $1^{20} 2^6 3^2 4^2 5^2$ \\
      line incidences:       & $1^{56}$  & $1^{56}$        & $1^{48}2^4$        & $1^{46} 2^2 3^2$         \\
      solid incidences:      & $1^{56}$  & $1^{56}$        & $1^{48}2^4$        & $1^{40} 2^8$             \\
      hyperplane incidences: & $2^{28}$  & $2^{28}$        & $2^{24}4^2$        & $2^{23} 4^1 6^1$         \\
      triples \rev{of planes $\pi_i$}:          & $c^{168}$ & $c^{168}$       & $a^{48}c^{96}$     & $a^{72} c^{48}$          \\ 
      \hline     
    \end{tabular}
    \caption{Irreducible tight {\avsp}s of $\PG(6,2)$ of type $4^8$.}
    \label{table_constructions_4_8_in_pg_6_2}
  \end{center}
\end{table}

For $\gamma_0=2$ we consider an arbitrary plane $\pi$ contained in the hyperbolic quadric $\cQ=Q^+(5,2)$, which form a single orbit under its collineation group $\mathrm{PGO}^+(6, 2) = C_2 \times \mathrm{PGL}(3,2) = S_8$ of order $40{,}320$.  
From the $35$ points on $\cQ$ the points in $\pi$ have no incidences with the eight planes while all other $28$ points on $\cQ$ have exactly two incidences. This example is 
obtained in 16~cases. The symmetry group of the eight planes has order $1344$ and type $C_2^3 : \operatorname{PGL}(3,2)$.

For $\gamma_0=3$ choose a projective base of $\PG(5,2)$, i.e., put $f_i = e_i$ for $1 \leq i \leq 6$ and $f_7 = \sum_{i=1}^6 e_i$. Consider a Fano plane on the set $\{ 1, 2, 3, 4, 5, 6, 7 \}$:
\begin{align*}
  &\ell_1 = \{ 1, 2, 3 \}, && \ell_2 = \{ 1, 4, 5 \}, && \ell_3 = \{ 1, 6, 7\}, &&\ell_4 = \{ 2, 4, 6 \}, \\
  & \ell_5 = \{ 3, 4, 7 \}, && \ell_6 = \{ 2, 5, 7 \}, && \ell_7 = \{ 3, 5, 6 \}.
\end{align*}
Choose seven planes $\pi_i \coloneq \< f_j: j \in \ell_i \>$ for $1\le i\le 7$ and an eight plane.
$\pi_8=K \coloneq \< \sum_{j \in \ell_i} f_j: 1 \leq \rev{i} \leq 7 \>$. Note that $K$ itself is also a Fano plane 
(of course with a different embedding). The points with three incidences with the eight planes are the $f_i$ for $1\le i\le 7$ and the points with two incidences 
with the eight planes are the points of $K$. This example is obtained in 112~cases. The symmetry group of the eight planes has order $168$ and type $\operatorname{PGL}(3,2)$.

For $\gamma_0=4$ let $\left\{ Q_1, Q_2, Q_3,Q_4,R_1, R_2 \right\}$ be a basis of $H_\infty$. 
With this, we construct the eight planes as
\begin{align*}
  & \< Q_{i+j},Q_{i+j+1}, R_i \> \text{ for } i \in \{ 1,2\}\rev{,}  j\in \{0,2\},\\
  & \< Q_{i+j},Q_{i+j+1}, R_i+A \> \text{ for } i \in \{ 1,2\}. j\in \{0,2\},
\end{align*}
where $A=Q_1+Q_2+Q_3+Q_4$ and $Q_5=Q_1$. The points with four incidences with the eight planes are $Q_1,\dots,Q_4$. The lines with two incidences with the eight planes are 
$\langle Q_i,Q_{i+1}\rangle$ for $1\le i\le 4$ (again setting $Q_5=Q_1$; so this is some kind of a cyclic construction). This example is obtained in 192~cases. The symmetry 
group of the eight planes has order $128$ and type $D_8^2 : C_2$.

For $\gamma_0=5$ let $\left\{ Q_1, Q_2, R_1, R_2, S_1, T_1  \right\}$ be a basis of $H_\infty$. 
With this, we set $S_2\coloneq S_1+Q_1+Q_2$, $T_2\coloneq T_1+Q_1+Q_2+R_1+R_2$ and construct the eight planes as
\begin{align*}
  & \< Q_1,Q_2, R_i \> \text{ for } i \in \{ 1,2\},\\
  &\< R_1,R_2, S_i \> \text{ for } i \in \{ 1,2\}, \rev{\text{and}}\\
  & \< Q_i, R_i, T_j \> \text{ for } i,j \in \{ 1,2\},
\end{align*}
which also reflects the three orbits of the eight planes w.r.t.\ the action of their automorphism group. The points with five incidences with the eight planes are $R_1$ 
and $R_2$. The points with four incidences with the eight planes are $Q_1$ and $Q_2$. The points with three incidences with the eight planes are $R_1+Q_1$ and $R_2+Q_2$. 
The lines with three incidences with the eight planes are $\langle R_1,Q_1\rangle$ and $\langle R_2,Q_2\rangle$. The lines with two incidences with the eight planes are 
$\langle R_1,R_2\rangle$ and $\langle Q_1,Q_2\rangle$. This example is obtained in 432~cases. The symmetry group of the eight planes has order $1024$. 

\section{Constructions of tight irreducible {\avsp}s}
\label{sec_constructions}
In this section we collect a few general constructions for tight irreducible {\avsp}s using different combinatorial objects. We use spreads, the Klein quadric, and hitting formulas in sections 
\ref{subsec_spreads}, \ref{subsec_klein_quadric}, and \ref{subsec_hitting_formulas}, respectively.

\subsection{Constructions from projective spreads}
\label{subsec_spreads}

A $k$-spread in $\PG(n-1,q)$ is a disjoint set of $k$-spaces that partitions $\PG(n-1,q)$. It is well known that $k$-spreads exist iff $k$ divides $n$.
\begin{proposition}
  For each positive even integer $n$ there exists a tight irreducible {\avsp} $\cU$ of $\PG(n-1,q)$ of type $(n/2)^m$, where $m=q^{n/2}$.
\end{proposition}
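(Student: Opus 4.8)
The plan is to realize $\cU$ directly inside an $(n/2)$-spread. Since $n/2$ divides $n$, an $(n/2)$-spread $\cS$ of $\PG(n-1,q)$ exists, and it consists of $\tfrac{q^n-1}{q^{n/2}-1}=q^{n/2}+1$ pairwise disjoint $(n/2)$-spaces. I would fix one element $S_0\in\cS$ together with a hyperplane $H_\infty$ with $S_0\le H_\infty$ (such a hyperplane exists because $n/2\le n-1$), and set $\cU\coloneq\cS\setminus\{S_0\}$ with $H_\infty$ as the hyperplane at infinity. The one structural fact I would isolate first is that $S_0$ is the \emph{only} member of $\cS$ contained in $H_\infty$: if $S_1\in\cS$ with $S_1\ne S_0$ also lay in $H_\infty$, then $S_0\cap S_1=\{0\}$ would give $\dim\langle S_0,S_1\rangle=n/2+n/2=n$, i.e.\ $\langle S_0,S_1\rangle=\PG(n-1,q)\le H_\infty$, which is absurd.

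With this in hand, the three required properties are routine. \emph{$\cU$ is an avsp:} every $U\in\cU$ has dimension $n/2\in\{1,\dots,n-1\}$ and satisfies $U\not\le H_\infty$, and any point $P$ outside $H_\infty$ lies in a unique member of the spread $\cS$, which cannot be $S_0$ since $S_0\le H_\infty$; hence $P$ lies in a unique member of $\cU$. Counting members gives $\#\cU=(q^{n/2}+1)-1=q^{n/2}=m$, so $\cU$ has type $(n/2)^m$. \emph{Tightness:} distinct members of a spread meet trivially and $\#\cU=q^{n/2}\ge 2$, so the intersection of all members of $\cU$ contains no point. \emph{Irreducibility:} if $\{U_i:i\in S\}\subseteq\cU$ with $\#S\ge 2$ were an avsp of a subspace $U$ with $\dim(U)<n$, then two distinct members $U_a,U_b$ would be contained in $U$, and being distinct spread elements they are disjoint, so $U\supseteq\langle U_a,U_b\rangle=\PG(n-1,q)$, contradicting $\dim(U)<n$.

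No step here is a genuine obstacle; the construction is just the spread read through an affine chart, and the span argument above is precisely what makes the choice of $H_\infty$ harmless (any hyperplane through a spread element will do). It is worth verifying the degenerate case $n=2$ in one line — the spread is the set of all points of $\PG(1,q)$, the hyperplane $H_\infty$ is one of those points, and $\cU$ consists of the remaining $q$ points, recovering the unique avsp of type $1^q$ — after which everything goes through uniformly in $q$ and $n$.
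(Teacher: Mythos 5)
Your proposal is correct and follows essentially the same route as the paper: delete one spread element, take a hyperplane through it as $H_\infty$, get tightness from pairwise triviality of intersections and irreducibility from the fact that any two spread elements span $\PG(n-1,q)$. You merely spell out details the paper leaves implicit (that no other spread member lies in $H_\infty$, and the case $n=2$), which is fine.
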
 
\begin{proof}
  Let $k=n/2$ and $\cP$ be a $k$-spread of $\PG(n-1,q)$, which has size $q^k+1$. Now choose an arbitrary element $K\in\cP$ and an arbitrary hyperplane $H$ containing $K$.
  With this we set $\cU=\cP\backslash\{K\}$ where we choose $H$ as the hyperplane at infinity. By construction $\cU$ is an {\avsp} of $\PG(n-1,q)$. Since 
  all elements are pairwise disjoint $\cU$ is tight and since any two elements span $\PG(n-1,q)$ $\cU$ is irreducible.   
\end{proof}

We have seen that in $\PG(5,2)$ there exist tight irreducible {\avsp}s of types $3^8$ and $2^{16}$. Starting from a $2$-spread of $\PG(5,q)$ we can clearly 
obtain a tight {\avsp} $\cU$ by removing all lines that are completely contained in an arbitrarily chosen hyperplane $H$. However, it may happen that $\cU$ is reducible. 
This is indeed the case if we start with the Desarguesian line spread. In $\PG(5,2)$ there exist $131{,}044$ non-isomorphic line spreads \cite{mateva2009line}. 

\begin{conjecture}
  For each integer $1<k<n$ that divides $n$ there exists a tight irreducible {\avsp} $\cU$ of $\PG(n-1,q)$ of type $k^m$, where $m=q^{n-k}$.
\end{conjecture}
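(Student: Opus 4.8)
The plan is to generalise the construction behind the preceding proposition (the case $n=2k$): start from a $k$-spread $\cP$ of $\PG(n-1,q)$, which exists because $k\mid n$, fix a hyperplane $H_\infty$, and put $\cU\coloneq\{U\in\cP : U\not\le H_\infty\}$. Every point outside $H_\infty$ lies in a unique element of $\cP$, and that element is necessarily not contained in $H_\infty$, so the affine parts of the elements of $\cU$ partition the $q^{n-1}$ points outside $H_\infty$; since $k<n$ each such element has dimension $k\le n-1$, so $\cU$ is an {\avsp}. Each element of $\cU$ meets $H_\infty$ in a $(k-1)$-space and hence covers $q^{k-1}$ affine points, so $\#\cU = q^{n-1}/q^{k-1} = q^{n-k}$ and $\cU$ has type $k^{q^{n-k}}$. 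Because the elements of $\cP$ are pairwise disjoint and $q^{n-k}\ge 2$, the intersection of all elements of $\cU$ is already trivial, so $\cU$ is automatically tight. The whole difficulty is therefore to choose $\cP$ (and $H_\infty$) so that $\cU$ is \emph{irreducible}.

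The next step would be to turn irreducibility into a condition on $\cP$ alone. Suppose $\cU$ is reducible, witnessed by a proper subspace $U$ with $\dim(U)<n$, $U\not\le H_\infty$, and a subcollection $\{U_i : i\in S\}$ with $\#S\ge 2$ that is an {\avsp} of $U$. Counting affine points of $U$ forces $\#S = q^{\dim(U)-k}$, and the $U_i$ ($i\in S$) are pairwise disjoint $k$-spaces (being spread elements) all lying inside $U$; two disjoint $k$-spaces already need an ambient dimension of at least $2k$, so in fact $\dim(U)\ge 2k$ and $\#S\ge q^k$. Moreover $U\cap H_\infty$ is partitioned by the pairwise disjoint $(k-1)$-spaces $U_i\cap H_\infty$ ($i\in S$) together with a residual point set $X$ of size $\tfrac{q^{\dim(U)-k}-1}{q-1}$. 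Hence it would suffice to find $(\cP,H_\infty)$ such that no proper subspace $U\not\le H_\infty$ with $2k\le\dim(U)\le n-1$ contains $q^{\dim(U)-k}$ elements of $\cP$ whose affine parts partition $U\setminus H_\infty$. A clean sufficient condition is that $\cP$ have \emph{no subspreads}: no proper subspace of $\PG(n-1,q)$ is a union of two or more elements of $\cP$. The Desarguesian $k$-spread fails this badly --- every $\F_{q^k}$-subspace is such a union --- which is precisely why the Desarguesian line spread yields a reducible $\cU$, as already observed; so a genuinely non-Desarguesian (indeed non-geometric) spread must be used.

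The main obstacle is the construction of such a spread, together with closing the gap between ``no subspreads'' and full irreducibility. For the latter I would study the residual set $X\subseteq U\cap H_\infty$: the $(k-1)$-spaces $U_i\cap H_\infty$ form a very large partial $(k-1)$-spread of $U\cap H_\infty$, and by the $\Delta$-divisibility of its set of holes (\Cref{lemma_divisible_sets_of_k_spaces}, and \Cref{lemma_tail_4} for $q=2$ and the small cases) one expects $X$ to be forced to be a subspace, whence $U$ itself becomes a union of spread elements, reducing everything to the no-subspread property. To produce a spread with no subspreads I see two routes. The first is iterated \emph{switching}: replace reguli of a Desarguesian spread by their opposites (for $k=2$, subregular spreads), and prove by induction on the dimension of a would-be subspread that sufficiently many pairwise-disjoint switches destroy every union of spread elements of every intermediate dimension $2k,3k,\dots$; the hard part is controlling all these dimensions simultaneously. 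The second is probabilistic: choose $\cP$ from a suitable random family of spreads and apply a union bound over the $q^{O(n^2)}$ candidate proper subspaces, showing the expected number of ``bad'' subspaces is below $1$; here the obstacle is setting up a random model of spreads over which the relevant probabilities are tractable. As partial progress one could first settle $n=3k$ and small field sizes (the case $q=2$, $k=2$, $n=6$, i.e.\ type $2^{16}$ in $\PG(5,2)$, is already covered by \Cref{prop_tiavsp_pg_5_2}) before attempting the general statement.
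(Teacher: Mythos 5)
You should first note that the statement you are addressing is stated in the paper as an open \emph{conjecture}: the paper gives no proof, only the surrounding discussion (removing from a $2$-spread of $\PG(5,q)$ the lines inside a chosen hyperplane gives a tight {\avsp} that can be reducible, and \emph{is} reducible for the Desarguesian spread). Your proposal follows exactly this suggested route, and its first half is fine: for any $k$-spread $\cP$ and any hyperplane $H_\infty$, the set $\cU=\{U\in\cP: U\not\le H_\infty\}$ is a tight {\avsp} of type $k^{q^{n-k}}$, by the point count you give and the pairwise disjointness of spread elements. But what you have written is a research plan, not a proof; the conjecture remains unproven by it.

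Two concrete gaps remain. First, your reduction of irreducibility to the ``no subspreads'' property of $\cP$ is incomplete, and you cannot expect it to be an equivalence as stated: if $\{U_i: i\in S\}$ is an {\avsp} of a witness subspace $U$, the $q^{\dim(U)-k}$ spread elements lie inside $U$ but cover only $\tfrac{q^{\dim(U)}-q^{\dim(U)-k}}{q-1}$ of its points; the remaining $\tfrac{q^{\dim(U)-k}-1}{q-1}$ points all lie in $U\cap H_\infty$ and may be covered by spread elements that meet $U$ only at infinity, so $U$ need not be a union of spread elements at all. Your hope that $\Delta$-divisibility (Lemma~\ref{lemma_divisible_sets_of_k_spaces}, Lemma~\ref{lemma_tail_4}) forces the residual set $X$ to be a subspace is plausible for small parameters but is nowhere established in the generality needed (all $k\mid n$, all $q$, all intermediate dimensions $2k\le \dim(U)\le n-1$). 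Second, even granting that reduction, you do not construct, for general $k$, $n$, $q$, a $k$-spread without subspreads (or satisfying whatever stronger condition the first step actually requires); both the regulus-switching route and the probabilistic route are only sketched, and you yourself flag them as the main obstacles. Until these two steps are carried out, the proposal establishes nothing beyond what the paper already records, namely the tight (but possibly reducible) {\avsp} obtained from an arbitrary spread and the known cases in small dimension.
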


If $n$ is odd no $\left\lfloor (n-1)/2\right\rfloor$-spread exists, but we can construct tight irreducible {\avsp}s from some special large partial spreads.
\begin{proposition}
  For each odd integer $n\ge 5$ there exists a tight irreducible {\avsp} $\cU$ of $\PG(n-1,q)$ of type $((n-1)/2)^m$, where $m=q^{(n+1)/2}$.
\end{proposition}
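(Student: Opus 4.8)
The plan is to mimic the even-dimensional construction, replacing the spread by a suitable large partial spread. Write $n = 2t+1$ with $t = (n-1)/2 \ge 2$, so we seek an {\avsp} of type $t^m$ with $m = q^{t+1}$. The natural starting point is a \emph{lifted MRD code}: there is a vector space partition $\cP$ of $\PG(n-1,q)$ of type $(t{+}1)^1 t^{q^{t+1}}$, i.e.\ one $(t{+}1)$-space $W$ together with $q^{t+1}$ pairwise disjoint $t$-spaces, all disjoint from $W$, which together partition the points. (This is exactly the partition attaining the minimum-size bound $\#\cP = q^{(n+1)/2}+1$ mentioned earlier in the excerpt.) The idea is to choose the hyperplane at infinity $H_\infty$ so that the big block $W$ is ``absorbed'' into $H_\infty$: if we can arrange $W \le H_\infty$, then the $q^{t+1}$ $t$-spaces of $\cP$ all meet $H_\infty$ in a $(t{-}1)$-space (since none of them lies in $H_\infty$, as each contains affine points), and they partition the $q^{n-1}$ points outside $H_\infty$; hence $\cU \coloneq \cP \setminus \{W\}$ is an {\avsp} of type $t^{q^{t+1}}$.

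First I would verify that $H_\infty$ can indeed be chosen through $W$: we need some hyperplane $H$ with $W \le H$ and $U_i \not\le H$ for every $t$-space $U_i \in \cP\setminus\{W\}$. Since $\dim(U_i) = t \le n-2$ and there are only finitely many $U_i$, a counting argument over the $\gaussm{n-t-1}{1}{q}$ hyperplanes through $W$ (each $U_i$ lies in at most $\gaussm{n-t-1}{1}{q} - q^{\,\cdot}$-many of them, a strict minority once one checks the arithmetic, using $\dim(U_i \cap W) \le \dim(U_i) + \dim(W) - n = 0$) shows such an $H$ exists; alternatively one can just exhibit the standard coordinatized MRD construction with $W$ placed at infinity directly. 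Either way this step is routine.

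It remains to check \textbf{tightness} and \textbf{irreducibility} of $\cU$. Tightness: the $t$-spaces $U_i$ are pairwise disjoint, so $\bigcap_i U_i$ is certainly pointless once $m \ge 2$, which holds. Irreducibility: suppose $\cU' \subsetneq \cU$ with $\#\cU' \ge 2$ joins to a proper subspace $V$ with $V \not\le H_\infty$; then two distinct $U_i, U_j \in \cU'$ satisfy $\dim\langle U_i, U_j\rangle = \dim(V) \le n-1$, hence $\dim(U_i \cap U_j) \ge 2t - (n-1) = 0$, contradicting disjointness of the partial spread --- \emph{unless} the relevant MRD code has the property that some two blocks span only a hyperplane. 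So the real content is to pick the lifted MRD code (equivalently, a partial spread of size $q^{t+1}$ extendable to the above type) whose blocks are ``in general position'' enough that any two span everything. For the Desarguesian / field-based MRD construction this is classical (Gabidulin codes of full rank distance give $t$-spaces any two of which generate $\PG(n-1,q)$), so I would just invoke that.

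The main obstacle I anticipate is precisely this last point: merely having a partial $t$-spread of the right size is not enough for irreducibility, and one must be a little careful to select (and cite, e.g.\ via the lifted-MRD/Gabidulin description already alluded to in the excerpt's discussion of type $((n+1)/2)^1((n-1)/2)^{q^{(n+1)/2}}$) a version in which pairwise spans are the whole space and in which the leftover $(t{+}1)$-space can legitimately be placed at infinity. Everything else --- the packing count $m = q^{t+1}$, tightness, and the reduction of irreducibility to pairwise spans --- follows the even-dimensional proposition essentially verbatim.
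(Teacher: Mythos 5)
Your setup is the paper's: take a vector space partition $\cP$ of $\PG(n-1,q)$ of type $(t+1)^1\,t^{q^{t+1}}$ (a lifted MRD code), put the hyperplane at infinity through the unique $(t+1)$-space $K$, and let $\cU=\cP\setminus\{K\}$. The packing count and tightness are fine, and your worry about choosing $H_\infty$ is unnecessary: a $t$-space disjoint from $K$ can never lie in a hyperplane $H\supseteq K$, since $\dim(U\cap K)\ge t+(t+1)-(n-1)=1$ would follow, so \emph{any} hyperplane through $K$ works, exactly as the paper asserts.

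The genuine gap is the irreducibility step. You reduce it to selecting an MRD code whose blocks pairwise span $\PG(n-1,q)$ and claim Gabidulin codes provide this; that property is impossible. In the paper's (algebraic) dimension convention two $t$-spaces in the ambient space of dimension $n=2t+1$ span a subspace of dimension at most $2t=n-1$, and for disjoint blocks the span is \emph{exactly} a hyperplane --- so the case you dismiss (``some two blocks span only a hyperplane'') is the only case, and no choice of MRD code rescues the argument. The paper argues instead: since any two elements span a hyperplane, a reducing subset $S$ could only be joined to a hyperplane $V\neq H_\infty$; but then every $U\in\cU$ satisfies $U\cap H_\infty\le V\cap H_\infty$ (for $U\in S$ because $U\le V$; for $U\notin S$ because the affine parts of $U$ and $V$ are disjoint, so $U\cap V\le H_\infty$, while $\dim(U\cap V)\ge t+(n-1)-n=t-1=\dim(U\cap H_\infty)$ forces $U\cap H_\infty=U\cap V$). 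Hence all traces at infinity would lie in the hyperplane $V\cap H_\infty$ of $H_\infty$, contradicting the fact that $\left\{U\cap H_\infty\,:\,U\in\cU\right\}$ spans $H_\infty$ --- which holds because these traces cover every point of $H_\infty$ outside the proper subspace $K$. Without an argument of this kind (or some substitute), your proof does not establish irreducibility, which is the crux of the statement.
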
 
\begin{proof}
  Let $k=(n-1)/2$ and $\cP$ be a vector space partition of $\PG(n-1,q)$ of type $\rev{(k+1)^1} k^m$, where $m=q^{k+1}$. Now choose an arbitrary hyperplane $H$ containing the unique
  $(k+1)$-dimensional element $K$ of $\cP$. With this we set $\cU=\cP\backslash\{K\}$ where we choose $H$ as the hyperplane at infinity. By construction $\cU$ is an {\avsp} of $\PG(n-1,q)$. Since all elements are pairwise disjoint $\cU$ is tight. Any two elements of $\cU$ span a hyperplane of $\PG(n-1,q)$. Since the elements 
  of $\cU'\coloneq \left\{ U\cap H_\infty\,:\,U\in\cU\right\}$ span $H_\infty$, not all elements of $\cU'$ can be contained in a hyperplane of $H_\infty$ and $\cU$ is irreducible. 
\end{proof}
Vector space partitions of the used type can be obtained from lifted MRD codes, see e.g.\ \cite{sheekey201913} for a survey on MRD codes. They also occur as extendible partial $k$-spreads, 
where $k=(n-1)/2$, of the second largest size $q^{k+1}$ and are the main building block in the construction of partial $k$-spreads of size $q^{k+1}+1$ as described by 
Beutelspacher \cite{beutelspacher1975partial}. For more details on the relations between these different geometrical objects we refer e.g.\ to \cite{honold2019classification}.   

For each $n\ge 5$ there also exist a vector space partition $\cP$ of $\PG(n-1,q)$ of type $(n-2)^1 2^m$, where $m=q^{n-2}$. Choosing a hyperplane that contains the unique $(n-2)$-space
as the hyperplane at infinity we can obtain a tight {\avsp} $\cU$ of $\PG(n-1,q)$ of type $2^{q^{n-2}}$. The remaining question is whether we can choose $\cP$ in 
such a way that $\cU$ becomes irreducible.

\subsection{Constructions from the Klein quadric}
\label{subsec_klein_quadric}

It seems very likely that the {\avsp} of $\PG(6,2)$ of type $4^8$ with maximum point multiplicity $2$, see Subsection~\ref{subsec_pg_5_2}, can be generalized to arbitrary field sizes.

\begin{theorem}\label{theorem_hyperbolic_quadric}
  There exists a tight irreducible {\avsp} of type $4^{q^3}$ in \rev{$\PG(6, q)$} 
  for $q$ even.
\end{theorem}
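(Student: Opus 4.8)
The plan is to construct the partition explicitly from the Klein correspondence and then lift it into $\PG(6,q)$ by a Frobenius trick that only works in characteristic $2$. First I fix coordinates: write $\PG(6,q)=\PG(\F_q^{7})$ with $\F_q^{7}=\F_q^{3}\oplus\Lambda^2\F_q^{3}\oplus\F_q$ and $H_\infty=\PG(\F_q^{3}\oplus\Lambda^2\F_q^{3})$, viewing $H_\infty$ as the Plücker space of $\PG(3,q)=\PG(\F_q e_0\oplus\F_q^{3})$ together with its Klein quadric $Q^+(5,q)$. For $a\in\F_q^{3}$ put $W_a=\{(u,a\wedge u):u\in\F_q^{3}\}$, a $3$-dimensional subspace of $\F_q^{3}\oplus\Lambda^2\F_q^{3}$; then $\PG(W_a)$ is the point-pencil of the affine point $[e_0+a]$ of $\PG(3,q)$, hence a plane of $Q^+(5,q)$, and the $q^{3}$ planes $\PG(W_a)$ will be the parts at infinity. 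From the explicit form one reads off at once $\dim(W_a\cap W_{a'})=1$ (so $\dim(W_a+W_{a'})=5$) for $a\ne a'$ and $\bigcap_a W_a=0$; these are the only facts about the $W_a$ I will need.

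Next I reduce the whole problem to a single functional equation. I look for $\cU=\{S_a:a\in\F_q^{3}\}$ with $S_a=\langle\PG(W_a),[(\sigma_a,1)]\rangle$, so that each $S_a$ is a solid meeting $H_\infty$ in $\PG(W_a)$ and $\cU$ has type $4^{q^{3}}$. Since $S_a$ depends only on $\sigma_a\bmod W_a$, I may normalise $\sigma_a=(0,y_a)$ for a function $y\colon\F_q^{3}\to\Lambda^2\F_q^{3}$; a short computation then shows that the point $((\xi_1,\xi_2),1)$ outside $H_\infty$ lies on $S_a$ exactly when $y_a+a\wedge\xi_1=\xi_2$. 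As $q^{3}\cdot q^{3}=q^{6}$, pairwise disjointness is equivalent to partitioning, so $\cU$ is an {\avsp} iff for every $u_0\in\F_q^{3}$ the map $a\mapsto y_a+a\wedge u_0$ is a bijection onto $\Lambda^2\F_q^{3}$. Passing to injectivity and using that $\{d\wedge u:u\in\F_q^{3}\}=\{v\in\Lambda^2\F_q^{3}:v\wedge d=0\}$ for $d\ne0$, this is the same as
\[
  (y_a-y_{a'})\wedge(a-a')\ne0\ \text{ in }\ \Lambda^3\F_q^{3}\cong\F_q\qquad\text{for all }a\ne a'.
\]

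The heart of the argument, and the step I expect to be the main obstacle, is producing such a $y$ while keeping the Klein-correspondence identifications straight. A linear $y$ would force the ternary quadratic form $d\mapsto(yd)\wedge d$ over $\F_q$ to be anisotropic, which is impossible for any $q$; so $y$ must be genuinely nonlinear, and this is where $q$ even comes in. Identifying $\F_q^{3}=\F_{q^{3}}$ and $\Lambda^2\F_q^{3}\cong\F_{q^{3}}$ compatibly — so that, up to fixed scalars and a power of Frobenius, $a\wedge u$ corresponds to $au^{q}-a^{q}u$ and the pairing above to a trace form — I would take $y$ to be the functional attached to the Frobenius monomial $\eta_a=c\,a^{q^{2}+q}$ with $c\in\F_q^{\times}$. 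An elementary manipulation (dividing the relevant difference by $(a-a')^{q+1}$ and invoking additive Hilbert~90) reduces the displayed condition to: $\mathrm{Tr}_{\F_{q^{3}}/\F_q}(t^{q}+t-1)\ne0$ for all $t\in\F_{q^{3}}$. But $\mathrm{Tr}_{\F_{q^{3}}/\F_q}(t^{q}+t-1)=2\,\mathrm{Tr}_{\F_{q^{3}}/\F_q}(t)-3=-1\ne0$ precisely when $\operatorname{char}\F_q=2$ (one uses both $2\,\mathrm{Tr}(t)=0$ and $3\ne0$; in characteristic $3$ the construction breaks). Checking that all the scalars and Frobenius twists introduced along the way really collapse to this one identity is the delicate bookkeeping.

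It remains to verify tightness and irreducibility, both of which come cheaply from the structure of the $W_a$. Because the affine parts of the $S_a$ are pairwise disjoint, $\bigcap_a S_a\le H_\infty$, hence $\bigcap_a S_a=\PG(\bigcap_a W_a)=\emptyset$ and $\cU$ is tight. For irreducibility, suppose $\{S_a:a\in I\}$, with $I\subsetneq\F_q^{3}$ and $|I|\ge2$, is an {\avsp} of a subspace $T\not\le H_\infty$ with $\dim T<7$. Then every $W_a$ ($a\in I$) lies in the hyperplane $T'=T\cap H_\infty$ of $T$, and counting affine points gives $|I|=q^{k}$ and $\dim T'=k+3$. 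A fixed pair $a_0\ne a_0'$ of $I$ yields the $5$-space $W_{a_0}+W_{a_0'}\le T'$, so $k\ge2$; since $T'\le H_\infty$ we also get $k\le2$ (the value $k=3$ would force $T\supseteq H_\infty$, hence $T=\PG(6,q)$). Thus $k=2$ and $T'=W_{a_0}+W_{a_0'}$; but $W_a\le W_{a_0}+W_{a_0'}$ holds only when $a$ lies on the affine line through $a_0$ and $a_0'$, and there are just $q<q^{2}=|I|$ such $a$ — a contradiction. Hence $\cU$ is an irreducible tight {\avsp} of type $4^{q^{3}}$ in $\PG(6,q)$ whenever $q$ is even.
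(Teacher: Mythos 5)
Your proposal is correct, but it takes a genuinely different route from the paper's. The paper works in the field model $\F_{q^3}\times\F_{q^3}\times\F_q$ with the quadric $\mathrm{Tr}(xy)$, takes the explicit solid $S_1=\{(y,y^{q^2}+y^q+1,1)\}$ together with $S_0=\{(x,0,1)\}$ and the orbit of $S_1$ under the cyclic group generated by $(x,y,z)\mapsto(\alpha^{-1}x,\alpha y,z)$, and reduces the partition property to the fact, cited from Tu--Zeng--Hu, that $y^{q^2+1}+y^{q+1}+y$ permutes $\F_{q^3}$ for $q$ even; tightness and irreducibility then follow because the parts at infinity are exactly the totally singular planes disjoint from a fixed plane of $Q^+(5,q)$ and pairwise meet in a point. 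You instead parametrize all admissible solids by their Klein-correspondence planes $W_a$ and compress the whole existence question into the single inequality $(y_a-y_{a'})\wedge(a-a')\neq 0$, which you solve with a Frobenius monomial. I checked the ``delicate bookkeeping'' you flag, and it does collapse as you claim: with the identification $a\wedge u\leftrightarrow(au^q-a^qu)^{q}$ (equivalently, keeping the untwisted identification $a\wedge u\leftrightarrow au^q-a^qu$ and the pairing $\mathrm{Tr}(\eta\,d^{q^2})$, but using the offset $y_a=c\,a^{q+1}$ with $c\in\F_q^{\times}$), the terms depending on the ``base point'' cancel precisely when $c\in\F_q$ and the characteristic is $2$, and what survives is $\mathrm{Tr}_{\F_{q^3}/\F_q}\bigl(c\,N_{\F_{q^3}/\F_q}(a-a')\bigr)=3c\,N(a-a')\neq 0$, which is exactly your identity $2\,\mathrm{Tr}(t)\pm 3=\pm 1\neq 0$; note that with the na\"{\i}ve untwisted pairing and the exponent $q^2+q$ the linear part does \emph{not} vanish and the construction would fail, so the Frobenius twist you allow for is genuinely needed, and in odd characteristic the required cancellation $c^{q}=-c^{q^2}$ has no solution, matching the restriction to $q$ even. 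Your route buys a self-contained argument in which the role of $q$ even is completely transparent, avoiding the permutation-polynomial citation; the paper's route buys an explicit cyclic symmetry of order $q^3-1$ acting on the partition. Finally, your irreducibility argument can be shortened to the paper's one-liner: since $S_a\cap S_{a'}=W_a\cap W_{a'}$ is a single projective point, any two solids already span $\PG(6,q)$, which rules out every proper joinable subset at once.
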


\begin{proof}
We will use the following finite field model of $\AG(6,q)$.
Let $V=\mathbb{F}_{q^3}\times \mathbb{F}_{q^3}\times \mathbb{F}_q$ and
let $H_\infty$ be the hyperplane $X_3=0$. So we identify $\AG(6,q)$ with
the elements of $V$ of the form $(a,b,c)$, where $c\ne 0$.
Consider the following quadratic form on $H_\infty$:
\[
Q(x,y,0)\coloneq Tr_{q^3/q}(xy).
\]
Then $Q$ defines the points of a hyperbolic quadric $\cQ$.
Next, let $\pi$ be the plane $\{(0,y,0):y\in\mathbb{F}_{q^3}^*\}$.
Then $\pi$ is totally singular with respect to $Q$. 
Let $S_0:=\{(x,0,1):x\in \mathbb{F}_{q^3}\}$ and 
$S_1\coloneq\{(y,y^{q^2}+y^q+1,1):y\in\mathbb{F}_{q^3}\}$.
Let $\alpha$ be a primitive element of $\mathbb{F}_{q^3}$, let $\sigma$ be the map
    \[    
        \sigma: (x, y, z) \mapsto (\alpha^{-1} x, \alpha y, z),
    \]
and let $G:=\langle \sigma\rangle$. We will show that
$\cS:=\{S_0\}\cup S_1^G$ is a tight irreducible {\avsp} of size $q^3$ in $\AG(6, q)$.

First note that $\sigma$ has order $q^3-1$. 
Let $(a,b,1)$ be a point $P$ of $\AG(6, q)$.
We show that $P$ lies in a unique element of $\cS$.
If $b=0$, then $P$ lies in $S_0$.
The condition that $P$ lies in $S_1^{\sigma^m}$ (where $1\le m \le q^3-1$) 
can be restated as
    \begin{align*}
   a = \alpha^{-m} y, &&  b = \alpha^m(y^{q^2}+y^q+1).
    \end{align*}
for some $y\in\mathbb{F}_{q^3}$. 
We have 
\[
ab=y^{q^2+1}+y^{q+1}+y
\]
and the polynomial $y^{q^2+1}+y^{q+1}+y$ is a permutation
on $\mathbb{F}_{q^3}$, by \cite[Theorem 4]{TuZengHu} \rev{(using our assumption $q$ even)}. Hence, 
$y$ and, thus, $m$ are determined by $a$ and $b$. Therefore, $\cS$ is an {\avsp}.

Note that
$\pi_1\coloneq\overline{S_1}\cap H_\infty=\{(y,y^{q^2}+y^q,0):y\in\mathbb{F}_{q^3}\}$
and $\pi_0\coloneq\overline{S_0}\cap H_\infty=\{(x,0,0):x\in\mathbb{F}_{q^3}\}$.
To compute the image of $\pi_1$ under $\sigma^m$, notice that
\begin{align*}
    (\alpha^{-m}y,\alpha^m(y^{q^2}+y^q),0)&=(\alpha^{-m}y, \alpha^{(q^2+1)m}(\alpha^{-m}y)^{q^2}+\alpha^{(q+1)m}(\alpha^{-m}y)^q,0)=(w, \zeta w^{q^2}+\zeta^q w^q,0)
\end{align*}
where $w=\alpha^{-m}y$ and $\zeta=\alpha^{(q^2+1)m}$.
Therefore, upon application of $G$,
\[
\cS_\infty \coloneq\{ \overline{S}\cap H_\infty : S \in \cS\} = \{\pi_0\}\cup \pi_1^G= \{ \pi_\zeta: \zeta \in \mathbb{F}_{q^3}\}
\]
where
$\pi_\zeta := \{ (y, \zeta y^{q^2}+\zeta^q y^q, 0 ) : y \in \mathbb{F}_{q^3}^* \}$.
Note that $|\cS_\infty| = q^3$ and that $\cS$ consists of totally singular 
planes of $\cQ$ disjoint from $\pi$. 
As these are all totally singular planes of $\cQ$ disjoint from $\pi$,
these cover the points of $\cQ$ uniformly and
their common intersection is empty and, thus, $\cS$ is tight.
As these pairwise meet 
in a point, any two elements of $\cS$ span $\PG(6, q)$.
This shows irreducibility.
\end{proof}

Let $\cP$ be the set of planes in the Klein quadric $\cQ=Q^+(5,q)$ that is disjoint to an arbitrary but fixed plane $\pi$ in $\cQ$. One can verify that $\cP$ is a spanning $q$-divisible set of $q^3$ planes in $\PG(5,q)$ such that the intersection of a pair of planes is a point, i.e., all known conditions for the part $\cU'$ at infinity of a tight irreducible {\avsp} of $\PG(6,q)$ of type 
$4^{q^3}$ are satisfied. The remaining question is whether a solution of the extension problem for $\cP$ exists.

\begin{conjecture}
  \label{conj_extension_klein_quadric}
  The extension problem for $\cP$ admits a solution for all prime powers $q$.
\end{conjecture}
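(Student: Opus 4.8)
For $q$ even the conjecture is already settled by Theorem~\ref{theorem_hyperbolic_quadric}, whose construction is exactly an extension of this $\cP$, so the content is the case $q$ odd, and the plan is to push the finite-field model of that proof to all $q$. Keeping $V=\F_{q^3}\times\F_{q^3}\times\F_q$, the hyperplane $H_\infty=\{X_3=0\}$, the form $Q(x,y,0)=Tr_{q^3/q}(xy)$ and $\pi=\{(0,y,0):y\in\F_{q^3}^*\}$, a short computation with the trace shows that every totally singular plane of $\cQ$ disjoint from $\pi$ is the graph $\pi_c\coloneq\{(y,\,cy^{q^2}-c^{q}y^{q},\,0):y\in\F_{q^3}^*\}$ for a unique $c\in\F_{q^3}$, and conversely every such graph is totally singular; thus $\cP=\{\pi_c:c\in\F_{q^3}\}$, generalising the planes $\pi_\zeta$ of that proof (the two coincide when $q$ is even). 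Any solid $S_c$ of $\PG(6,q)$ meeting $H_\infty$ in $\pi_c$ has affine part $\{(y,\,cy^{q^2}-c^{q}y^{q}+t_c,\,1):y\in\F_{q^3}\}$ for a unique $t_c\in\F_{q^3}$, and $(a,b,1)\in S_c$ iff $b=ca^{q^2}-c^{q}a^{q}+t_c$. Hence a choice $c\mapsto t_c$ solves the extension problem exactly when, for every $a\in\F_{q^3}$, the map
\[
  c\ \longmapsto\ ca^{q^2}-c^{q}a^{q}+t_c
\]
is a bijection of $\F_{q^3}$ (the case $a=0$ already forces $c\mapsto t_c$ to be a permutation); once this holds, tightness and irreducibility of the resulting {\avsp} follow verbatim from the proof of Theorem~\ref{theorem_hyperbolic_quadric}, since the $\pi_c$ are totally singular planes that pairwise meet in a point and have empty common intersection.

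For $q$ even the bijection condition is met by the power map $t_c=c^{(q^2+1)^{-1}}$, the exponent inverse taken modulo $q^3-1$, which is legitimate precisely because $\gcd(q^2+1,q^3-1)=1$; this is just the orbit description $\cS=\{S_0\}\cup S_1^{G}$ re-read coordinate-wise. For $q$ odd one has instead $\gcd(q^2+1,q^3-1)=2$, so this choice is unavailable, and the Singer-type group $G=\langle\sigma\rangle$ of the even proof has an orbit of length only $(q^3-1)/2$ on $\cP$, its involution $(x,y,z)\mapsto(-x,-y,z)$ fixing the direction $\pi_1$ but sending the solid with parameter $t_1$ to the one with parameter $-t_1$; so no single $G$-orbit can work. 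The route I would take is to allow a genuinely $\F_q$-linear $t(c)=Ac+Bc^{q}+Cc^{q^2}$. The bijection condition then says that the linearised polynomial $(A+a^{q^2})c+(B-a^{q})c^{q}+Cc^{q^2}$ permutes $\F_{q^3}$ for every $a$, which by the Dickson-matrix criterion is a single polynomial non-vanishing condition in $a$; already for $C=0$ it becomes $N_{q^3/q}(A+a^{q^2})+N_{q^3/q}(B-a^{q})\neq 0$ for all $a\in\F_{q^3}$, and one then looks for $A,B$ (most plausibly in $\F_q$) making this identically non-zero.

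The step I expect to be the real obstacle is exactly this last one for odd $q$: exhibiting a permutation $t$ of $\F_{q^3}$ for which $c\mapsto ca^{q^2}-c^{q}a^{q}+t(c)$ is a permutation for every $a$, equivalently proving the norm inequality above holds for all $a$. In the even case the corresponding input was the permutation-polynomial statement of \cite{TuZengHu} for the trinomial $y^{q^2+1}+y^{q+1}+y$; for odd $q$ I would expect to need an analogous permutation-polynomial or norm identity, and would first run a search over small $q^3$ to pin down the right family of $(A,B)$ (or, if no $\F_q$-linear $t$ works, a suitable non-monomial $t$ whose restriction to every affine line behaves well). As an independent check and a possible alternative line of attack, the Klein correspondence identifies $\cP$ with the set of $q^3$ planes of $\PG(3,q)$ missing a fixed point, turning the extension problem into a purely combinatorial lifting of that configuration, which one might hope to solve uniformly or at least confirm for small $q$.
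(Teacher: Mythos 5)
This statement is a \emph{conjecture}, and the paper does not prove it: the only evidence it offers is Theorem~\ref{theorem_hyperbolic_quadric}, which settles the case $q$ even, and a computer verification for $q=3,5$. Your proposal, measured against that, is in the same position. Your reformulation is sound and is essentially the coordinate form of the paper's even-characteristic construction: parametrising $\cP$ as the graphs $\pi_c=\{(y,\,cy^{q^2}-c^{q}y^{q},0)\}$ is consistent with the $\pi_\zeta$ in the proof of Theorem~\ref{theorem_hyperbolic_quadric}, the reduction of the extension problem to choosing $t:\F_{q^3}\to\F_{q^3}$ with $c\mapsto ca^{q^2}-c^{q}y^{q}\big|_{y=c}+t(c)$ (i.e.\ $c\mapsto ca^{q^2}-c^{q}a^{q}+t(c)$) a permutation for every $a$ is correct, your gcd computation $\gcd(q^2+1,q^3-1)=1$ or $2$ according to the parity of $q$ is right, and the Dickson-matrix reduction to $N_{q^3/q}(A+a^{q^2})+N_{q^3/q}(B-a^{q})\neq 0$ for the $\F_q$-linear ansatz with $C=0$ checks out.

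The genuine gap is the one you yourself flag: for odd $q$ you do not exhibit any $t$ (linear or otherwise) satisfying the permutation condition, nor prove that one exists, and this is exactly the content of the conjecture — it is not a technical detail that can be waved through, since the even case relied on the nontrivial permutation-trinomial result of \cite{TuZengHu}, which has no stated odd-characteristic analogue, and your norm inequality is a nontrivial condition whose solvability for all odd $q$ is precisely what is unknown. So what you have is a plausible research programme (and a clean restatement of the obstruction), not a proof; it goes no further than the paper, which deliberately leaves the odd case open and lists it among the open problems (finding a solution of the extension problem for $q$ odd). If you pursue it, note also that your claim that every totally singular plane disjoint from $\pi$ is of the form $\pi_c$ should be argued (a count of $q^3$ such planes plus injectivity of $c\mapsto\pi_c$ suffices), but that is minor compared with the missing existence of $t$.
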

Theorem \ref{theorem_hyperbolic_quadric} shows the conjecture for $q$ even. 
By computer we showed
Conjecture~\ref{conj_extension_klein_quadric} for $q=3,5$.

\subsection{Constructions using hitting formulas}
\label{subsec_hitting_formulas}
A \emph{hitting formula} is a DNF such that each truth assignment to the underlying variables satisfies precisely one term \cite{Iwama}. 
For example:
\[
 (x \land y \land z) \lor
 (\bar{x} \land \bar{y} \land \bar{z}) \lor
 (\bar{x} \land y) \lor
 (\bar{y} \land z) \lor
 (\bar{z} \land x).
\]
We say that a variable \emph{appears} in the DNF if one of the two corresponding literals appears in one of the terms. The variables mentioned in the above DNF are $x,y,z$. 
We can represent hitting formulas over $x_1,\ldots,x_n$ as collections of strings in $\{0,1,*\}^n$, where $0$ in the $i$'th position represents $\bar{x}_i$, $1$ in the $i$'th 
position represents $x_i$, and $*$ in the $i$'th position represents the absence of $x_i$ in the term. For example, the above hitting formula corresponds to the strings
 $111,000,01*,*01,1{*}0$.

This notion describes subcubes of affine points. Taking the projective closure we end up with the list
$$
  \langle 1111\rangle, \langle 1000\rangle, \langle 1010,0001\rangle, \langle 1001,0100\rangle , \langle 1100,0010\rangle
$$
of subspaces of $\PG(3,2)$ that form an {\avsp}, which obviously is not irreducible. However, we can join the first two elements to 
$\langle 1000,0111\rangle$ and obtain a tight irreducible {\avsp}. While every string corresponds to an affine subspace, not every affine 
subspace corresponds to a string. It turns out that any two strings having its stars at the same positions can be joined to an affine subspace. 
For brevity, we speak of \emph{compression}. Interestingly enough, several tight irreducible {\avsp}s of $\PG(n-1,2)$ of the minimum possible size
can be obtained by compression, see Section~\ref{sec_compression} in the appendix. More theoretical insights on the relations between hitting formulas 
and {\avsp}s \rev{can be found in \cite{filmus2022irreducible}--}  
focusing on irreducible hitting formulas.

\section{The minimum possible size of tight irreducible {\avsp}s}  
\label{sec_minimum_size}
We have discussed the minimum possible size of a (tight) {\avsp} of $\PG(n-1,q)$ in Section~\ref{sec_preliminaries}. Before we consider the 
minimum possible size $\sigma_q(n)$ of a tight irreducible {\avsp} $\cU$ of $\PG(n-1,q)$ we remark that Lemma~\ref{lemma_m_1_zero} implies 
the upper bound $\#\cU\le 2^{n-2}$ for $q=2$. The constructions mentioned in Section~\ref{sec_constructions} suggest that this upper bound can be attained. 
In Section~\ref{sec_classification_avsp} and Section~\ref{sec_classification_ti_avsp} we have determined the exact values $\sigma_q(2)=q$, $\sigma_q(3)=\infty$, 
$\sigma_2(4)=4$, $\sigma_2(5)=6$, $\sigma_2(6)=7$, and $\sigma_2(7)=8$.

\begin{lemma}
  \label{lemma_exclusion_6_1_5_4_4_4}
  \rev{In $\PG(7,2)$ no tight irreducible {\avsp} of type $6^1 5^4 4^4$ exists.}
\end{lemma}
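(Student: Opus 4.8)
The plan is to suppose such a tight irreducible {\avsp} $\cU$ of $\PG(7,2)$ (so $n=8$) exists and to derive a contradiction. Write $H_\infty$ for the hyperplane at infinity, let $U_0$ be the $6$-space, $U_1,\dots,U_4$ the $5$-spaces, $U_5,\dots,U_8$ the $4$-spaces, and set $W_i\coloneq U_i\cap H_\infty$, so $\dim W_0=5$, $\dim W_i=4$ for $1\le i\le 4$, and $\dim W_i=3$ for $5\le i\le 8$. First I would analyse the tail $\cT=\{U_5,\dots,U_8\}$: by Lemma~\ref{lemma_divisible_sets_of_k_spaces} the set $\cT'=\{W_5,\dots,W_8\}$ is a $2$-divisible set of four $3$-spaces, so Lemma~\ref{lemma_tail_4} yields a line $B$, a plane $E$ and a line $L\le E$ with $\dim\langle E,B\rangle=5$ and $\cT'=\{\langle P,B\rangle:P\in E\setminus L\}$. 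Hence $B\le W_j\le U_j$ for $5\le j\le 8$, and $M\coloneq\langle W_5,\dots,W_8\rangle=\langle E,B\rangle$ has dimension $5$. Since the affine parts of distinct elements are disjoint, $\bigcap_i U_i=\bigcap_i W_i$, and as $\bigcap_{j=5}^8 W_j=B$, tightness is exactly the statement that $B\cap W_0\cap W_1\cap\dots\cap W_4$ is trivial; in particular $W_0,\dots,W_4$ do not all contain $B$, and each of the three points of $B$ is missed by at least one of them.

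The main step is to contradict Lemma~\ref{lemma_spanning} by showing $\langle W_0,\dots,W_8\rangle\ne H_\infty$. The dimension condition (Lemma~\ref{lemma_dimension_condition}) gives $\dim(W_0\cap W_i)\ge 3$ for $1\le i\le 4$, $\dim(W_0\cap W_j)\ge 2$ for $5\le j\le 8$, $\dim(W_i\cap W_j)\ge 2$ for $1\le i<j\le 4$, and $\dim(W_i\cap W_j)\ge 1$ for $1\le i\le 4<j\le 8$, while the corollary to Lemma~\ref{lemma_irreducible_no_multiset} forbids $\dim(W_i\cap W_j)=4$ for $1\le i<j\le 4$. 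In particular $\langle W_0,W_i\rangle$ has dimension $\le 6$ for each $1\le i\le 4$. Combining these bounds with $W_5,\dots,W_8\le M$ and with tightness, I would run a case distinction — on $\dim(W_0\cap M)$ (equivalently, how $B$ sits with respect to $W_0$), on $\#\{i\le 4: B\le W_i\}$, and on the $\dim(W_i\cap W_j)$ — showing that in the generic cases all nine traces are forced into a single $6$-space of $H_\infty$, necessarily one of the three hyperplanes of $H_\infty$ through $M$, which contradicts Lemma~\ref{lemma_spanning}. For the remaining configurations I would instead choose a hyperplane $K$ of $H_\infty$ with $W_0\le K$: then $U_0$ lies in one of the two affine hyperplanes $G,G'$ through $K$, the induced {\avsp} $\cU^{(G)}$ of $G\cong\PG(6,2)$ contains the hyperplane $U_0$, so by Lemma~\ref{lemma_characterization_full} its other elements form an {\avsp} of a copy of $\PG(5,2)$; its type is constrained by the packing condition~(\ref{eq_packing_condition}) and, when it happens to be tight and irreducible, by Proposition~\ref{prop_tiavsp_pg_5_2}, and one checks that the dimensions available there, together with the mirror count forced on $G'$, cannot be realised.

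The hard part will be the bookkeeping in the main case: tightness must be used precisely so that whenever the dimension counts alone would permit $\langle W_0,\dots,W_8\rangle=H_\infty$ a genuine obstruction still emerges — a packing violation in a $K$-decomposition, or a type excluded by Corollary~\ref{corollary_no_tail_of_size_q} or Proposition~\ref{prop_tiavsp_pg_5_2} — and one must keep the case analysis on the integers $\dim(W_0\cap M)$, $\#\{i\le 4:B\le W_i\}$ and $\dim(W_i\cap W_j)$ finite and exhaustive. A possible shortcut I would also attempt is to quotient out the line $B$: in $H_\infty/B\cong\PG(4,2)$ the tail traces become the four points of an affine plane, and one hopes the quotiented dimension condition together with tightness still force the images of $W_0,\dots,W_4$ into a hyperplane; making that argument uniform is itself the obstacle.
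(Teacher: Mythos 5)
Your opening is sound and matches the paper's first step: by Lemma~\ref{lemma_divisible_sets_of_k_spaces} and Lemma~\ref{lemma_tail_4} the four tail traces are the planes $\langle P,B\rangle$, $P\in E\setminus L$, through a common line $B$, their span $M$ is a $5$-space, $\bigcap_{j=5}^{8}W_j=B$, and tightness amounts to $B\cap W_0\cap\dots\cap W_4$ being trivial; the dimension bounds you list are also correct. The genuine gap is in your main step. You hope to finish by showing that the trace configuration $\cU'=\{W_0,\dots,W_8\}$ cannot simultaneously be spanning and tight (up to a residue of cases killed by a $K$-decomposition and Proposition~\ref{prop_tiavsp_pg_5_2}), i.e.\ to get a contradiction purely from conditions at infinity. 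This cannot work: the paper's own proof enumerates exactly these candidate configurations, prunes them with the very conditions you invoke (spanning via Lemma~\ref{lemma_spanning}, no common point, the intersection-dimension constraints, and the hyperplane counts excluded by Lemma~\ref{lemma_partition_packing}), and there \emph{remain} admissible spanning, tight configurations $\cU'$. These survivors are only eliminated by checking, computationally, that the extension problem (lifting $\cU'$ to actual affine elements of $\cU$) has no solution. So your claim that "in the generic cases all nine traces are forced into a single $6$-space of $H_\infty$" is not merely unproven bookkeeping — it is false as a route to the contradiction, and no amount of case analysis on $\dim(W_0\cap M)$, $\#\{i\le4:B\le W_i\}$ and the pairwise intersection dimensions will close it without engaging the extension step.

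Your fallback is also shaky: choosing a hyperplane $K$ of $H_\infty$ with $W_0\le K$ and decomposing does give (via Proposition~\ref{prop_at_least_one_hyperplane} inside the affine hyperplane $G$ containing $U_0$) an induced {\avsp} of a $\PG(5,2)$-copy, but its members are traces $U\cap G_2$ of elements of $\cU$, not elements of $\cU$, and there is no reason for it to be tight or irreducible; hence Proposition~\ref{prop_tiavsp_pg_5_2} does not apply and the "mirror count" on the other affine hyperplane is not a usable constraint. If you want a complete proof along the paper's lines, you must (i) enumerate the candidate trace configurations built from the unique tail structure together with a $5$-space meeting each plane in dimension at least $2$ and four solids subject to the dimension conditions, (ii) prune by spanning, tightness and Lemma~\ref{lemma_partition_packing} (the paper lists the excluded hyperplane-incidence vectors $s=(s_3,s_4,s_5)$), and then (iii) show for each surviving configuration that no extension to an actual {\avsp} exists — in the paper this last step is done by computer, and no purely theoretical substitute for it is offered.
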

\begin{proof}
  \rev{Assume that $\cU$ is a tight irreducible {\avsp} of type $6^1 5^4 4^4$ in $\PG(7,2)$.  Consider $\cU':=\left\{U\cap H_\infty \mid U\in\cU\right\}$. From Lemma~\ref{lemma_divisible_sets_of_k_spaces}  and Lemma~\ref{lemma_tail_4} we conclude that the four planes in $\cU'$ share a common line $L$ and that there is a unique configuration up to symmetry. Since the $5$-space in $\cU'$ intersects each of the four planes in dimension at least $2$, it also intersects $L$ in dimension at least $1$. We enumerate the possible configurations of the $5$-space and the four planes in $\cU'$ up to symmetry. For each such configuration we build up a list of candidates for the four solids using the facts that the intersect the planes in dimension $1$ or $2$ and the $5$-space in dimension at least $3$ or $4$. Next we consider a $4$-subsets of those candidates whose dimensions of the pairwise intersections are contained in $\{2,3\}$. We end up with a list of candidates for $\cU'$.  Here we can eliminate those which a common point or are not spanning, cf.~Lemma~\ref{lemma_spanning}. For each hyperplane $H$ of $H_\infty$ let $s:=\left(s_3,s_4,s_5\right)$ be given by 
  $s_i:=\#\left\{ U\in\cU'\mid \dim(U)=i,U\le H\right\}$. 
  From Lemma~\ref{lemma_partition_packing} we can conclude that the following cases cannot occur:
  \begin{itemize}
    \item $s=(0,1,0)$;\\[-5mm]
    \item $s=(0,3,0)$;\\[-5mm]
    \item $s=(0,0,1)$;\\[-5mm]
    \item $s=(2,3,1)$;\\[-5mm]
    \item $s=(0,1,1)$.
  \end{itemize}  
  For the remaining cases we have checked computationally that the extension problem does not admit a solution.}
\end{proof}

\begin{lemma}
  In $\PG(6,2)$ every configuration $\cU'$ of type $5^2 4^2 3^4$, $5^2 4^1 3^6$, $5^1 4^6$, 
  or $4^8$ that  
  satisfies the 
  conditions of Lemma~\ref{lemma_spanning}, Lemma~\ref{lemma_partition_packing}, and the dimension condition, cf.~Lemma~\ref{lemma_dimension_condition}, admits a point $P$ that is contained in all elements of $\cU'$. 
\end{lemma}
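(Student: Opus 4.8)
The plan is to treat the four listed types uniformly wherever possible, exploiting the fact that $H_\infty\cong\PG(5,2)$ is small and that in each case the configuration $\cU'$ has very few elements (at most eight, and at most two of large dimension). First I would observe that in every one of the four types the set of $2$-spaces in $\cU'$ has size that is a multiple of four or is empty, and in fact by Lemma~\ref{lemma_divisible_sets_of_k_spaces} it is $2$-divisible; so for the types with exactly four $2$-spaces I would invoke Lemma~\ref{lemma_tail_4} to pin down the $2$-spaces (a plane $E$, a line $L\le E$, a point $B$, with the four planes being $\langle Q,B\rangle$ for $Q\in E\backslash L$), which already forces $B$ into all four of them. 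The strategy is then to show that $B$ — or the analogous common intersection coming from the tail characterisation, or a point forced by a direct parity argument on the larger-dimensional elements — must in fact lie in \emph{all} elements of $\cU'$, using the dimension condition (Lemma~\ref{lemma_dimension_condition}) to control how the $5$-spaces and $4$-spaces can meet the tail, and Lemma~\ref{lemma_partition_packing} to rule out the configurations in which $B$ escapes some element.

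The key steps, in order, would be: (i) handle $4^8$ first, since it is homogeneous and the analysis of Subsection~\ref{subsec_pg_6_2} already shows that every realisable $\cU'$ of type $4^8$ falls into one of four isomorphism classes indexed by $\gamma_0\in\{2,3,4,5\}$; the claim $\gamma_0\le 5$ from Table~\ref{table_constructions_4_8_in_pg_6_2} is exactly the statement that no point lies in more than five planes, so for $4^8$ I must instead argue that \emph{under the weaker hypotheses} (spanning, Lemma~\ref{lemma_partition_packing}, dimension condition) there is always a common point — i.e.\ that these necessary conditions alone do not yet exclude the degenerate "sunflower" configuration, and a common point $P$ is forced. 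Here the counting tool is: a $2$-divisible set of eight planes in $\PG(5,2)$ whose pairwise intersections are lines or planes; I would run the spectrum equations (\ref{se1})–(\ref{se2}) together with the divisibility of the induced point multiset and show the only consistent point-multiplicity distribution compatible with Lemma~\ref{lemma_partition_packing} has a point of multiplicity eight. (ii) For $5^1 4^6$, quotient out a point of the unique $5$-space if possible, or use Lemma~\ref{lemma_dimension_condition} to see that each $4$-space meets the $5$-space in dimension $\ge 3$, hence all six $4$-spaces contain a common line $M\le$ (the $5$-space); then reduce modulo $M$ to a smaller instance and recurse. (iii) For $5^2 4^2 3^4$ and $5^2 4^1 3^6$, combine the Lemma~\ref{lemma_tail_4} description of the four $2$-spaces (respectively, of $\cU'$'s small part) with the observation that the two $5$-spaces, being $4$-dimensional subspaces of $\PG(5,2)$, intersect in dimension $\ge 3$ and each contains (by the dimension condition) large intersections with every other element, forcing their common line into the common point $B$ supplied by the tail; then Lemma~\ref{lemma_partition_packing} kills the remaining freedom.

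I expect the main obstacle to be the homogeneous case $4^8$: there the tail characterisation of Lemma~\ref{lemma_tail_4} does not directly apply (there are eight $2$-spaces, not four), so one must either split the eight planes into two $2$-divisible quadruples and argue that the two sunflower apices coincide, or push through the full spectrum/MacWilliams computation for a $2$-divisible $8$-set of planes in $\PG(5,2)$ and check by hand which point-multiplicity vectors survive the constraints of Lemma~\ref{lemma_partition_packing}. The other three types should be genuinely easier once one notices that the presence of a codimension-$\le 2$ subspace (the $5$-space(s)) severely constrains all pairwise intersections via Lemma~\ref{lemma_dimension_condition}; the bookkeeping there is routine. A secondary subtlety is that the statement asserts the common point lies in \emph{every} element of $\cU'$ including the large ones, so after locating a point common to all the small (tail) elements I must separately verify it lies in the $4$- and $5$-dimensional parts, which is where Lemma~\ref{lemma_spanning} (to exclude the alternative that the large parts all avoid $B$ and $\cU'$ collapses into a hyperplane) and a final application of Lemma~\ref{lemma_partition_packing} (to exclude mixed behaviour) do the work.
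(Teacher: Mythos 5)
Your proposal does not match the paper's argument: the paper establishes this lemma purely computationally, by an exhaustive ILP search over the candidate configurations (see Section~\ref{sec_ilp}), so a human-readable proof would have to be carried out in full — and what you give is a plan with several concrete errors rather than an argument. The most serious one is the ambient space. The configurations $\cU'$ of the lemma live in $H_\infty\cong\PG(6,2)$ (they are the traces at infinity of hypothetical tight irreducible {\avsp}s of $\PG(7,2)$ of types $6^2 5^2 4^4,\dots,5^8$), so type $4^8$ means eight \emph{solids} in $\PG(6,2)$, not eight planes in $\PG(5,2)$. Your step (i) explicitly analyses ``a $2$-divisible set of eight planes in $\PG(5,2)$'' and appeals to the classification of Subsection~\ref{subsec_pg_6_2}; but those eight-plane configurations in $\PG(5,2)$ (e.g.\ the Klein-quadric example) are spanning, $2$-divisible, have pairwise intersections in points or lines, and have \emph{no} common point — they are exactly the parts at infinity of the tight type-$4^8$ {\avsp}s of $\PG(6,2)$. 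So the spectrum computation you propose, run in $\PG(5,2)$, cannot produce the claimed forced point of multiplicity eight; the statement is only true one dimension up, and there the decisive constraint is the system of Lemma~\ref{lemma_partition_packing}, which your sketch never actually brings into the counting. A smaller slip of the same kind: for the tail of four $3$-spaces (type $5^2 4^2 3^4$), Lemma~\ref{lemma_tail_4} yields a common \emph{line}, not a common point $B$.

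Step (ii) also contains an invalid deduction: from the dimension condition each of the six $4$-spaces meets the $5$-space in dimension at least $3$, i.e.\ in codimension at most $2$ inside it, but six subspaces of codimension $\le 2$ in a $5$-space need not share a line, nor even a point, so ``hence all six $4$-spaces contain a common line $M$'' does not follow and the proposed recursion has no starting point. Step (iii) is similarly unsubstantiated (``Lemma~\ref{lemma_partition_packing} kills the remaining freedom'' is asserted, not shown). Since the paper itself only knows how to verify this lemma by machine enumeration, each of your four cases currently has a genuine gap; to salvage the approach you would need to work in $\PG(6,2)$ throughout, combine the spectrum/divisibility counts with the explicit inequalities of Lemma~\ref{lemma_partition_packing} for every hyperplane class, and carry the case analysis to completion rather than outline it.
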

\begin{proof}
  All cases have been excluded by ILP computations, \rev{cf.~Section~\ref{sec_ilp} for general model formulations}.
\end{proof}

\begin{corollary}
  In $\PG(7,2)$ no tight irreducible {\avsp} of the following types exist: $6^2 5^2 4^4$, $6^2 5^1 4^6$, $6^1 5^6$, $6^1 5^4 4^4$, $5^8$. 
\end{corollary}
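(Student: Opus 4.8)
The plan is to reduce each of the five types in $\PG(7,2)$ to the corresponding "one-less-dimension" statement in $\PG(6,2)$ via the $K$-decomposition machinery, and then invoke the preceding lemma that forbids a common point at infinity for the four types $5^2 4^2 3^4$, $5^2 4^1 3^6$, $5^1 4^6$, $4^8$ together with Lemma~\ref{lemma_exclusion_6_1_5_4_4_4} for the type $6^1 5^4 4^4$. Concretely: suppose $\cU$ is a tight irreducible {\avsp} of $\PG(7,2)$ of one of the listed types, and consider $\cU' := \{U \cap H_\infty : U \in \cU\}$, a configuration of subspaces in $H_\infty \cong \PG(6,2)$. By Lemma~\ref{lemma_spanning} (applicable since $\cU$ is not of type $7^{2}$) the configuration $\cU'$ spans $H_\infty$, by Lemma~\ref{lemma_partition_packing} it satisfies the partition-packing constraints, and by Lemma~\ref{lemma_dimension_condition} it satisfies the dimension condition. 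The key observation is that for each of the four types $6^2 5^2 4^4$, $6^2 5^1 4^6$, $6^1 5^6$, $5^8$ the induced configuration $\cU'$ has exactly the type $5^2 4^2 3^4$, $5^2 4^1 3^6$, $5^1 4^6$, $4^8$ respectively (subtract one from every dimension), so the previous lemma applies verbatim; and for the type $6^1 5^4 4^4$ we appeal directly to Lemma~\ref{lemma_exclusion_6_1_5_4_4_4}, which already rules it out without passing through $\cU'$.

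For the first four types, the previous lemma tells us there is a point $P \in H_\infty$ contained in every element of $\cU'$, i.e.\ $P \in U \cap H_\infty$ for all $U \in \cU$, and in particular $P$ lies in the intersection of all $U \in \cU$. This directly contradicts tightness of $\cU$, which by definition requires the intersection of all elements to be trivial. Hence no such $\cU$ exists, proving the corollary for $6^2 5^2 4^4$, $6^2 5^1 4^6$, $6^1 5^6$, and $5^8$. The fifth type, $6^1 5^4 4^4$, is handled separately and more laboriously in Lemma~\ref{lemma_exclusion_6_1_5_4_4_4}, so we simply cite it.

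The main subtlety to verify carefully is that the map $U \mapsto U \cap H_\infty$ really does drop each dimension by exactly one and preserves the multiplicities. This holds because $\cU$ is an {\avsp}, so no $U_i$ lies in $H_\infty$, hence $\dim(U_i \cap H_\infty) = \dim(U_i) - 1$ for every element; thus a $k$-space of $\cU$ becomes a $(k-1)$-space of $\cU'$, and the type $\smash{(n-1)^{m_{n-1}}\cdots}$ in $\PG(7,2)$ maps to the type $\smash{(n-2)^{m_{n-1}}\cdots}$ in $H_\infty \cong \PG(6,2)$. Combined with $m_7 = m_1 = 0$ (Corollary~\ref{cor_m_n_minus_1_zero} and Lemma~\ref{lemma_m_1_zero}), the four listed types in $\PG(7,2)$ land precisely on the four types covered by the previous lemma, so there is no gap. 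I do not anticipate a genuine obstacle here: all the heavy lifting — the ILP exclusions at the $\PG(6,2)$ level and the detailed case analysis for $6^1 5^4 4^4$ — has already been done in the cited results, and the corollary is essentially a bookkeeping step that lifts those facts one dimension by intersecting with $H_\infty$ and invoking tightness.
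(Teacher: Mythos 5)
Your argument is exactly the intended one: the paper derives this corollary by passing to $\cU'=\{U\cap H_\infty : U\in\cU\}$, noting that the four types $6^2 5^2 4^4$, $6^2 5^1 4^6$, $6^1 5^6$, $5^8$ induce configurations of types $5^2 4^2 3^4$, $5^2 4^1 3^6$, $5^1 4^6$, $4^8$ in $H_\infty\cong\PG(6,2)$ satisfying the hypotheses of the preceding lemma, whose common point contradicts tightness, while $6^1 5^4 4^4$ is ruled out directly by Lemma~\ref{lemma_exclusion_6_1_5_4_4_4}. This matches the paper's route, so there is nothing further to add.
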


\begin{corollary}
  The minimum size $\sigma_2(8)$ of an irreducible tight {\avsp} of $\PG(7,2)$ is given by $10$.
\end{corollary}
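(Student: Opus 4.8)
The plan is to prove the two inequalities $\sigma_2(8)\le 10$ and $\sigma_2(8)\ge 10$ separately. For the upper bound it suffices to exhibit one tight irreducible {\avsp} of $\PG(7,2)$ with exactly $10$ elements; a convenient source is the hitting‑formula machinery of Section~\ref{sec_constructions}: one compresses a suitable hitting formula on $7$ variables (see Section~\ref{sec_compression}) and then verifies tightness and irreducibility directly, both of which are easy to check, cf.\ the remark following Lemma~\ref{lemma_lb_tight_avsp}. The substance of the statement therefore lies in the lower bound.

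For the lower bound I would show that every tight irreducible {\avsp} $\cU$ of $\PG(7,2)$ has $\#\cU\ge 10$ by ruling out every feasible type of smaller size. Writing the type of $\cU$ as $6^{m_6}\dots 1^{m_1}$, Corollary~\ref{cor_m_n_minus_1_zero} and Lemma~\ref{lemma_m_1_zero} give $m_7=m_1=0$, Corollary~\ref{cor_n_minus_2_ub} gives $m_6\le 2$, and the packing condition~(\ref{eq_packing_condition}) becomes $16m_6+8m_5+4m_4+2m_3+m_2=64$. A finite case analysis (the number of parts is at least $6$, with equality only for $6^2 5^4$) shows that the types with $\#\cU\le 9$ are precisely $6^2 5^4$, $6^2 5^3 4^2$, $6^2 5^3 4^1 3^2$, $6^2 5^3 4^1 3^1 2^2$, $6^2 5^3 3^4$, $6^2 5^2 4^4$, $6^2 5^2 4^3 3^2$, $6^2 5^1 4^6$, $6^1 5^6$, $6^1 5^5 4^2$, $6^1 5^5 4^1 3^2$, $6^1 5^4 4^4$, $5^8$, and $5^7 4^2$ --- fourteen in all.

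Each of these fourteen types must now be shown to be infeasible. Seven of them, namely $6^2 5^3 4^2$, $6^2 5^3 4^1 3^2$, $6^2 5^3 4^1 3^1 2^2$, $6^2 5^2 4^3 3^2$, $6^1 5^5 4^2$, $6^1 5^5 4^1 3^2$, and $5^7 4^2$, have their smallest‑dimension part of size exactly $2$, so (using $m_7=0$) they are impossible by Corollary~\ref{corollary_no_tail_of_size_q}. The type $6^2 5^4=(n-2)^2(n-3)^4$ is impossible by Lemma~\ref{lemma_no_type_4_2_3_4}. The five types $6^2 5^2 4^4$, $6^2 5^1 4^6$, $6^1 5^6$, $6^1 5^4 4^4$, $5^8$ are excluded by the results established immediately above (for $6^1 5^4 4^4$ this is Lemma~\ref{lemma_exclusion_6_1_5_4_4_4}; for the others, the corresponding configurations $\cU'=\{U\cap H_\infty:U\in\cU\}$ in $H_\infty\cong\PG(6,2)$ were shown to always contain a common point, so $\cU$ cannot be tight). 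Finally, the one remaining type, $6^2 5^3 3^4$, I would kill by the tail argument used already for the analogous types in $\PG(6,2)$: by Lemma~\ref{lemma_divisible_sets_of_k_spaces} the set $\cT'=\{U\cap H_\infty:\dim U=3\}$ would be a $2^{5-3}=4$‑divisible set of four lines in $H_\infty\cong\PG(6,2)$, but Lemma~\ref{lemma_tail_4} shows that every $2$‑divisible set of four lines has the form $\{\langle P,B\rangle:P\in E\setminus L\}$ for a point $B$, a plane $E$, and a line $L\le E$, and such a pencil (its images modulo $B$ form a hyperoval of a plane) is met in exactly two of its lines by a suitable hyperplane of $\PG(6,2)$, contradicting $4$‑divisibility (equivalently, replacing the four lines by their twelve points would give an $8$‑divisible multiset of $12<16$ points, which does not exist). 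This exhausts all cases, hence $\sigma_2(8)\ge 10$, and combined with the upper bound $\sigma_2(8)=10$.

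Most of the argument is bookkeeping once the structural inputs are in hand; the single step doing genuinely new geometric work is the exclusion of $6^2 5^3 3^4$, and on drilling down the heaviest ingredients are the ones feeding the phrase ``established immediately above'', in particular Lemma~\ref{lemma_exclusion_6_1_5_4_4_4} and the non‑existence of the relevant configurations at infinity, both of which rely in part on integer linear programming, cf.\ Section~\ref{sec_ilp}. Replacing those computer‑assisted exclusions by hand is the main obstacle to a fully self‑contained proof.
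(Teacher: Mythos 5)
Your proposal is correct and follows essentially the route the paper leaves implicit: the upper bound comes from an explicit size-$10$ example obtained by compression (Section~\ref{sec_compression}), and the lower bound from enumerating all admissible types of size at most $9$ (your list of fourteen is complete) and excluding them via Corollary~\ref{corollary_no_tail_of_size_q}, Lemma~\ref{lemma_no_type_4_2_3_4}, Lemma~\ref{lemma_exclusion_6_1_5_4_4_4}, and the computer-assisted exclusions of the configurations at infinity. You correctly notice and handle the one type, $6^2 5^3 3^4$, that the paper's explicitly stated exclusions for $\PG(7,2)$ do not cover, and your argument for it (Lemma~\ref{lemma_divisible_sets_of_k_spaces} plus Lemma~\ref{lemma_tail_4}, then exhibiting a hyperplane through $B$ meeting $E$ in a line other than $L$, hence containing exactly two of the four lines) is exactly the argument the paper applies to the analogous types $5^2 4^3 2^4$, $5^1 4^5 2^4$, $4^7 2^4$ in $\PG(6,2)$, and it is valid. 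One small caution: the parenthetical ``equivalently'' remark is false as a general claim, since $8$-divisible multisets of $12$ points in characteristic $2$ do exist (for instance a line taken with multiplicity four), so you must rely on the direct hyperplane count (or on the fact that the specific multiset arising here, with $B$ of multiplicity four, fails $8$-divisibility), not on a blanket nonexistence statement.
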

\noindent
For attaining examples we refer to Section~\ref{sec_compression} in the appendix.

\medskip
\noindent
Our next aim is a recursive construction which implies an asymptotic upper bound of roughly $\tfrac{3n}{2}$ for $\sigma_2(n)$.    
\begin{theorem}
  \label{lemma_recursion_1}
  Let $\cU=\left\{U_1,\dots,U_r\right\}$ be an irreducible tight {\avsp} of $\PG(n-1,2)$ with $\dim(U_1)=n-2$ and $n\ge 3$. Then, 
  there exists an irreducible tight {\avsp} $\cU'$ of $\PG(n+2-1,2)$ of size $\#\cU+3=r+3$ that contains an element of dimension $n$.
\end{theorem}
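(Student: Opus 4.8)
The plan is to build $\cU'$ out of $\cU$ by a two-stage lifting. Recall the construction from the ``point outside'' lemma earlier in the paper: if $\cU$ is an {\avsp} of $\PG(n-1,2)=V$ and $P$ is a point outside $V$ (inside $\PG(n,2)$), then $\{\langle U_i,P\rangle\}$ is an {\avsp} of $\langle V,P\rangle$ of the same size, with the same reducibility status, and the new hyperplane at infinity is $\langle H_\infty,P\rangle$. Iterating this twice would give an irreducible tight {\avsp} of $\PG(n+2-1,2)$ of size $r$, but without any increase in size and without an element of dimension $n$; so the key idea is to do the lifting in a way that ``uses up'' the extra two dimensions while splitting off three parts.

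First I would single out $U_1$, the element of dimension $n-2$, and let $K_1\coloneq U_1\cap H_\infty$, an $(n-3)$-space inside $H_\infty$. The idea is to embed $V=\PG(n-1,2)$ into $W\coloneq\PG(n+1,2)$, pick a suitable $2$-space $\Pi$ meeting $V$ only in a point of $U_1$ (or perhaps meeting $V$ trivially), and replace $U_1$ by the ``prism'' $\langle U_1,\Pi\rangle$, which has dimension $(n-2)+2=n$ — this supplies the required element of dimension $n$ — while the other $U_i$ ($i\ge 2$) get lifted to $\langle U_i,Q\rangle$ for a single new point $Q$, keeping their affine-point counts doubled in the way needed for the packing condition. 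Because we have enlarged the ambient dimension by $2$ but only ``used'' a plane over $U_1$, there is a leftover region of points that must be partitioned; the natural way to cover it is with three new parts — two hyperplane-like subspaces and one more — chosen so that, together with the lifted old parts, they tile $\PG(n+1,2)$; this is exactly where the ``$+3$'' in the size comes from. I would verify the packing condition $\sum m_i 2^{i-1}=2^{n+1}$ directly against Equation~(\ref{eq_packing_condition}) for $\cU$.

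Next I would check tightness and irreducibility of $\cU'$. For tightness: the intersection of all elements of $\cU'$ is contained in the intersection of the three new parts, which we arrange (by choosing them to have empty common intersection, as in the proof of Lemma~\ref{lemma_lb_tight_avsp}) to be trivial; alternatively one shows $\bigcap_{U'\in\cU'}U'\subseteq\langle H_\infty^{\mathrm{new}}\rangle\cap(\text{something trivial})$ using that $\cU$ was already tight so $\bigcap_i U_i$ is trivial in $V$. For irreducibility I would use the characterization that $\cU'$ is reducible iff some proper subset is an {\avsp} of a proper subspace, which by the remark after Definition of reducibility can be detected on pairs (affine closures of pairs). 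A reducible subfamily would have to either lie entirely among lifts of old parts — contradicting irreducibility of $\cU$ via the inheritance in the ``point outside'' lemma — or involve one of the three new parts, and here I would use that the new parts were chosen to span $\PG(n+1,2)$ together with any single old lift, so no proper subspace can contain the relevant union.

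The main obstacle I expect is the bookkeeping in choosing the three new subspaces and the plane $\Pi$ so that simultaneously (a) the point sets tile $\PG(n+1,2)\setminus H_\infty^{\mathrm{new}}$ exactly, (b) tightness is not destroyed, and (c) no new reducible subfamily is created — in particular ruling out that two of the three new parts, or one new part together with $\langle U_1,\Pi\rangle$, accidentally form an {\avsp} of a proper subspace. Getting (c) right is delicate because the three new parts naturally arise from a $K$-decomposition of $\PG(n+1,2)$ with respect to some $n$-space $K'\le H_\infty^{\mathrm{new}}$, and such decompositions are exactly the reducible configurations; the fix is to make sure the three new parts are ``entangled'' with the lifted $U_i$'s — e.g.\ by routing the recursive hyperplane-splitting of Lemma~\ref{lemma_lb_tight_avsp} through a subspace that genuinely meets the lifts of the old parts — so that no sub-collection closes up into a subspace on its own. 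Once the configuration is pinned down, verifying (a) is just the packing identity, and (b) follows from tightness of $\cU$ plus the empty-intersection choice for the three new parts.
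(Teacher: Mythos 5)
Your overall architecture is the same as the paper's (lift the old parts through a point, enlarge the $(n-2)$-dimensional element by two dimensions to get the required $n$-space, and complete the partition with three new parts), but the proposal stops exactly where the proof has to do its work, and one of the shortcuts you offer is false. First, the three new parts are never actually constructed, and their precise choice is not a postponable detail: in the paper they are three $(n-1)$-spaces $C_2,C_3,C_4$ inside the second hyperplane $H_2$ through the chosen $n$-space $K\le H_\infty$, selected so that together with $C_1\coloneq B\cap H_2$ they form an {\avsp} of $H_2$ with $\dim(C_i\cap C_j)=\dim(C_1\cap C_2\cap C_3\cap C_4)=n-3$ and $C_1\cap C_2\cap C_3\cap C_4\le K'$, where $K=\langle P,K'\rangle$ and $P$ is the lifting point; every later verification leans on exactly these properties. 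Second, your tightness route (``choose the three new parts with empty common intersection'') is dimensionally impossible in general: three $(n-1)$-spaces in the $(n+1)$-dimensional hyperplane $H_2$ meet in dimension at least $n-5$, and even in the ambient space $\PG(n+2-1,2)$ in dimension at least $n-7$, so for large $n$ no such choice exists. Nor does tightness transfer from $\cU$ as you suggest: after lifting, \emph{all} old parts contain the lifting point, so tightness must be restored by the new parts; the paper's argument is $B\cap A_2\cap\dots\cap A_r=P$, $B\cap C_2\cap C_3\cap C_4\le K'$, and $P\not\le K'$. (Also note that if your line $\Pi$ meets $U_1$ in a point, then $\dim\langle U_1,\Pi\rangle=n-1$, not $n$; it must meet $V$ trivially.)

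Third, irreducibility --- the real content of the theorem, since a careless completion of the second hyperplane does collapse into a $K$-decomposition exactly as you fear --- is left as an acknowledged obstacle rather than proved; ``entangle the new parts with the old ones'' is the right instinct but it has to be implemented and verified. The paper does this by a concrete case analysis: any joinable proper subfamily must contain exactly one element of $\left\{B,C_2,C_3,C_4\right\}$ (because any two of $C_1,\dots,C_4$ span $H_2$), the joined space has dimension $n$ or $n+1$, and both cases are refuted using that every lifted element contains $P$ while no $C_i$ does, respectively by intersecting with $H_1$ and contradicting irreducibility of the lifted copy of $\cU$ there. Until the three new subspaces are pinned down and this kind of case analysis is carried out, the proposal is a plausible plan rather than a proof.
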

\begin{proof}
  Let $V=\PG(n+2-1,2)$, $H_\infty$ be the hyperplane at infinity, and $K\le H_\infty$ be an arbitrary subspace with $\dim(K)=n$. With this, denote the two hyperplanes 
  containing $K$ and not being equal to $H_\infty$ by $H_1$ and $H_2$. Choose an arbitrary point $P\le K$ and a subspace $K'\le K$ such that $\dim(K')=n-1$ 
  and $\left\langle P,K'\right\rangle=K$. Now choose an irreducible tight {\avsp} $\cU=\left\{U_1,\dots,U_r\right\}$ of $H_1/P$ such that 
  $\dim(U_1)=n-2$. We set $A_i\coloneq \left\langle U_i,P\right\rangle$ for all $1\le i\le r$. Choose an $n$-space $B$ with $B\cap H_1=A_1$ and $B\not \le H_\infty$, so 
  that $C_1\coloneq B\cap H_2$ is an $(n-1)$-space in $H_2$ with $C_1\not\le H_\infty$ and $P\le C_1$. In $H_2$ choose three further $(n-1)$-spaces $C_2,C_3,C_4$ such that 
  $\dim(C_i\cap C_j)=\dim(C_1\cap C_2\cap C_3\cap C_4)=n-3$ for all $1\le i<j\le 4$, $C_1\cap C_2\cap C_3\cap C_4\le K'$, and that $\left\{ C_1,C_2,C_3,C_4\right\}$ 
  forms an {\avsp} of $H_2$. (This boils down to an {\avsp} of $\PG(4-1,2)$ of type $2^4$, which is a union of four disjoint lines.) Then, 
  $$
    \cU'\coloneq\left\{A_2,\dots,A_r,B,C_2,C_3,C_4\right\}
  $$
  is an irreducible tight {\avsp} of $V$ of size $\#\cU+3=r+3$. The size follows directly from the construction and $\dim(B)=n$. 
  Since $B\cap H_1\cap H_\infty=B\cap H_2\cap H_\infty$ we have $B\cap C_2\cap C_3\cap C_4 =C_1\cap C_2\cap C_3\cap C_4\le K'$ and 
  $B\cap A_2\cap\dots \cap A_r =A_1\cap\dots\cap A_r=P$, so that $\cU'$ is tight. Noting that $\cU''\coloneq\left\{ A_1,\dots,A_r\right\}$ is an {\avsp}
  of $H_1$, $\left\{C_1,\dots,C_4\right\}$ is an {\avsp} of $H_2$, and $\left\{A_1,C_1\right\}$ is an {\avsp} of $B$, we conclude that $\cU'$ is 
  indeed an {\avsp} of $V$. 
   
   It remains to show that $\cU'$ is irreducible. So, assume that there exists a proper subset $\tilde{\cU}\subsetneq \cU'$ that can be joined 
   to an $x$-space $X$. If $\tilde{\cU} \cap \left\{B,C_2,C_3,C_4\right\}=\emptyset$, then we have $\tilde{\cU}\subseteq \cU''$ contradicting the 
   fact that $\cU''$ is irreducible. So, especially we have $x\in\{n,n+1\}$. Noting that any two elements in $\left\{C_1,C_2,C_3,C_4\right\}$ span 
   $H_2$, we conclude $\#\left(\tilde{\cU} \cap \left\{B,C_2,C_3,C_4\right\}\right)=1$.      
   \begin{itemize}
     \item[(i)] If $x=n$, then let $2\le i\le 4$ be the unique index such $C_i\in \tilde{\cU}$. Clearly, $B\notin\tilde{\cU}$. Let $\tilde{C}$ be the the other $(n-1)$ 
                space in $X$ not contained in $H_\infty$ 
                and not equal to $C_i$ with $\tilde{C}\cap H_\infty=C_i\cap\tilde{C}$, so that the elements of $\tilde{\cU}\backslash \{C_i\}$ form a 
                vector space partition of $\tilde{C}$. However, since $P\not\le C_i$ and all elements in $\cU\backslash\left\{B,C_1,C_2,C_3\right\}$ contain $P$, 
                this is impossible.
     \item[(ii)] If $x=n+1$ and $\#\left(\tilde{\cU} \cap \left\{B,C_2,C_3,C_4\right\}\right)=1$, then we have $\dim(X')=n$ for $X'\coloneq X\cap H_1$. If 
                 $B\in \tilde{\cU}$, then $\tilde{\cU}\backslash\{B\} \cup \left\{A_1\right\}$ can be joined to $X'$ in $H_1$, which is a contradiction. 
                 If $C_i\in \tilde{\cU}$, then $\tilde{\cU}\backslash\{C_i\} $ can be joined to $X'$ in $H_1$, which is also a contradiction.            
   \end{itemize}
   Thus, $\cU'$ is irreducible.
\end{proof}
        
\begin{corollary}
  For each $n\ge 4$ an irreducible tight {\avsp} $\cU$ of $\PG(n-1,2)$ of size $\left\lfloor \tfrac{3n-3}{2}\right\rfloor$ exists.
\end{corollary}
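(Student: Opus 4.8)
The plan is a short induction on $n$ in steps of two, with Theorem~\ref{lemma_recursion_1} as the engine. The key is to carry along, besides the size, the extra invariant that the {\avsp} contains an element of codimension two (algebraic dimension $n-2$ in $\PG(n-1,2)$): this is precisely the hypothesis Theorem~\ref{lemma_recursion_1} requires of its input, and its conclusion produces an {\avsp} of $\PG((n+2)-1,2)$ containing an element of dimension $n=(n+2)-2$, i.e.\ again of codimension two, so the invariant propagates for free. First I would fix the two base cases, one per parity: for $n=4$, the partial line spread of size $4$ in $\PG(3,2)$ (type $2^4$), which is irreducible and tight (its lines are pairwise disjoint and any two span $\PG(3,2)$), has size $4=\left\lfloor\tfrac{3\cdot 4-3}{2}\right\rfloor$, and consists of codimension-two (dimension $2$) elements; for $n=5$, an irreducible tight {\avsp} of $\PG(4,2)$ of type $3^2 2^4$, whose existence is recorded in Section~\ref{subsec_pg_4_2}, which has size $6=\left\lfloor\tfrac{3\cdot 5-3}{2}\right\rfloor$ and contains an element of dimension $n-2=3$.

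For the inductive step I would take an irreducible tight {\avsp} $\cU$ of $\PG(n-1,2)$ of size $\left\lfloor\tfrac{3n-3}{2}\right\rfloor$ with a codimension-two element, apply Theorem~\ref{lemma_recursion_1}, and obtain an irreducible tight {\avsp} $\cU'$ of $\PG((n+2)-1,2)$ of size $\#\cU+3$ containing an element of dimension $n$. The only arithmetic to verify is that $\#\cU'$ matches the target: since $3(n+2)-3=(3n-3)+6$ and $6$ is even, $\left\lfloor\tfrac{3(n+2)-3}{2}\right\rfloor=\left\lfloor\tfrac{3n-3}{2}\right\rfloor+3=\#\cU'$. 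Iterating from the base case of the matching parity then reaches every $n\ge 4$.

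I do not anticipate a real obstacle: all the substance sits inside Theorem~\ref{lemma_recursion_1} and the small-dimensional classifications that supply the base cases, so what is left is bookkeeping. The one place to stay careful is the dimension convention: one should check explicitly that ``dimension $n-2$ in $\PG(n-1,2)$'' and ``dimension $n$ in $\PG((n+2)-1,2)$'' both denote codimension-two subspaces, so that the output of one step is a legitimate input to the next.
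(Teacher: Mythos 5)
Your proposal is correct and follows essentially the same route as the paper: the same two base cases ($n=4$ of type $2^4$, $n=5$ of type $3^2 2^4$) and iteration of Theorem~\ref{lemma_recursion_1}, with the only addition being that you make explicit the bookkeeping the paper leaves implicit, namely that the output of each step again contains a codimension-two element (dimension $n$ in $\PG(n+1,2)$) and that $\left\lfloor\tfrac{3(n+2)-3}{2}\right\rfloor=\left\lfloor\tfrac{3n-3}{2}\right\rfloor+3$. No gaps.
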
        
\begin{proof}
  For $n=4$ there exists such an example with type $2^4$ and for $n=5$ there exists such an example with type $3^2 2^4$. Then, iteratively apply the 
  construction from Theorem~\ref{lemma_recursion_1}.
\end{proof}        
\noindent
We remark that the constructive upper bound for $q_2(n)$ is tight for $n\in\{4,5,6,8\}$. 
  
\section{Conclusion}
\label{sec_conclusion}
We have introduced the geometrical object of affine vector space partitions. To make their study interesting we need the additional conditions of tightness 
and irreducibility, which are natural in the context of hitting formulas. A very challenging problem is the determination of the minimum possible size 
of an irreducible tight {\avsp} of $\PG(n-1,q)$. To this end we have obtained some preliminary results for arbitrary field sizes but small dimensions and 
for the binary case with medium sized dimensions. We also gave a parametric construction that matches the known exact values in many cases. That irreducible 
tight {\avsp}s are nice geometric objects can be e.g.\ seen at their sometimes large automorphism groups as well as the mentioned connection to the hyperbolic 
quadric $Q^+(5,q)$. While we have obtained a few insights, many questions remain open. So, we would like to close with a list of a few open problems:
\begin{enumerate}
  \item Consider tight irreducible {\avsp}s of $\PG(4,q)$ of type \rev{$3^{m_3} 2^{m_2}$}. What is the largest possible value for $m_3$?\\[-6mm] 
  \item Determine a solution of the extension problem for the set $\cP$ of $q^3$ planes in $\PG(5,q)$ obtained from the hyperbolic quadric $Q^+(5,q)$ for $q$ odd, cf.~Conjecture~\ref{conj_extension_klein_quadric}.\\[-6mm]
  \item Determine further constructions for tight irreducible {\avsp}s of $\PG(n-1,q)$ with large automorphism groups.\\[-6mm]
  \item Construct a tight irreducible {\avsp} of $\PG(n-1,q)$ of type $2^{q^{n-2}}$ for all $n\ge 5$.\\[-6mm]
  \item Is it possible that a tight irreducible {\avsp} of $\PG(n-1,q)$ contains $1$-dimensional elements if \rev{$n\ge 4$} and $q\ge 3$?\\[-6mm]
  \item Determine further exact values of the minimum size $\sigma_q(n)$ of a tight irreducible {\avsp} of $\PG(n-1,q)$.\\[-6mm]
  \item Determine $\lim_{n\to\infty} \sigma_q(n)/n$.\\[-6mm]
  \item Is $\sigma_q(n)$ strictly increasing in $n$?\\[-6mm]
\end{enumerate}  

\section*{Acknowledgements}
\rev{First of all we would like to thank the two anonymous referees for their careful reading and the many useful remarks that improved the presentation of the paper a lot.} 
We thank Esmeralda N{\u{a}}stase, Artur Riazanov, and Yuriy V.~Tarannikov for helpful discussions. 
Further thanks go to 
Tomáš Peitl and Stefan Szeider for sharing with us the results of the computer search reported in~\cite{PeitlSzeider}.
Ferdinand Ihringer and Sascha Kurz would like to thank the organizers of the Sixth Irsee Conference on Finite Geometries for their invitation. During that conference 
the idea of analyzing and introducing {\avsp}s slowly evolved, being triggered by some open problems for hitting formulas.

This project has received funding from the European Union's Horizon 2020 research and innovation programme under grant agreement No~802020-ERC-HARMONIC. Ferdinand Ihringer is supported by a postdoctoral fellowship of the Research Foundation -- Flanders (FWO).

\appendix
\section{Integer linear programming formulations}
\label{sec_ilp}

Let $\cU'$ be an arbitrary set of subspaces of $H_\infty$ in $\PG(n-1,q)$. For the question whether $\cU'$ can be extended to an 
{\avsp} $\cU$ of $\PG(n-1,q)$ we utilize binary variables $x_C$ for all subspaces $C$ of $\PG(n-1,q)$ such that $C\not\le H_\infty$ 
and $C\cap H_\infty \in \cU'$ with the meaning $x_C=1$ iff $C\in\cU$. We denote the set of all of these subspaces by $\cC$. For each 
point $P$ in $\PG(n-1,q)\backslash H_\infty$ the equation
\begin{equation}
  \sum_{\rev{C}\in\cC\,:\,P\le C} x_C =1   
\end{equation}
and for each $U\in\cU'$ the \rev{inequality}
\begin{equation}
  \label{ie_extension}
  \sum_{\rev{C}\in\cC\,:\,U\le C} x_C \rev{\ge} 1   
\end{equation}
has to be satisfied. (If we are only interested in irreducible {\avsp}s, then we can require {\lq\lq}$=${\rq\rq} in Inequality~(\ref{ie_extension}).) The $0/1$ solutions of this equation system are in one-to-one correspondence to extensions of $\cU'$ 
to {\avsp}s $\cU$ in $\PG(n-1,q)$.

\medskip

Searching a tight irreducible {\avsp} $\cU$ in $\PG(n-1,q)$ directly can be achieved by a similar model. Now let $\cC$ be the set of subspaces of $\PG(n-1,q)$ that 
are not incident with $H_\infty$. Again we use binary variables $x_C$ for all $C\in\cC$ with the meaning $x_C=1$ iff $C\in\cU$. Partitioning the affine points is modeled 
by 
\begin{equation}
  \sum_{\rev{C}\in\cC\,:\,P\le C} x_C =1   
\end{equation}
for all points $P$ not contained in $H_\infty$. The condition that $\cU$ is tight can be written as
\begin{equation}
  \sum_{C\in \cC\,:\,Q\le C} x_C +1 \le \sum_{C\in \cC} x_C 
\end{equation}
for all points $Q\le H_\infty$. In order to model the condition that $\cU$ is irreducible we say that a subspace $A$ \emph{escapes} a subspace $B$ if 
$A$ has both points that are contained and points that are not contained in $B$. So, for each $B\in \cC$ we require
\begin{equation}
  x_B\,+\, \sum_{C\in\cC\text{ such that $C$ escapes $B$}} x_C \ge 1,
\end{equation}
\rev{i.e., either $B\in\cU$ or there exists an element $C\in\cU$ certifying that no subset of $\cU$ can be joined to $B$.}

Of course we can fix the type of $\cU$ by additional equations. Using a target function we can minimize or maximize $\#\cU$ as well as the number 
of $i$-dimensional elements. We have to mention that this ILP formulation comprises a lot of symmetry, so that it can be solved in reasonable time 
for small parameters $n$ and $q$ only. However, we can use the inherent symmetry to fix some of the $x_C$ variables. I.e.\ the symmetry group acts 
transitively on the set of $a$-spaces that are not contained in $H_\infty$. For pairs of an $a$-space $A$ and a $b$-space $B$ that both are not 
contained in $H_\infty$, the different orbits under the action of the symmetry group are characterized by the invariant $\dim(A\cap B)$. 

\section{Technical details}
\label{sec_details}

In order to keep the paper more readable, we have moved some technical details, that may also be left to the reader, to this section.
The proof of Lemma~\ref{lemma_all_types_are_feasible} uses the numbers $m_i^{(j)}$ satisfying certain constraints. For completeness we state how those number 
can be computed in Algorithm \ref{algo_decomposition_packing_formula}.

\renewcommand{\algorithmicrequire}{\textbf{Input:}}
\renewcommand{\algorithmicensure}{\textbf{Output:}}
\begin{algorithm} 
\begin{algorithmic}
\REQUIRE $m_{n-1},\dots,m_1\in \N_0$ with $\sum_{i=1}^{n-1} m_i \cdot q^{i-1}=q^{n-1}$
\ENSURE  $m_i^{(j)}\in\N_0$ with $\sum_{i=1}^{n-2} m_i^{(j)} \cdot q^{i-1}=q^{n-2}$ for all $\rev{m}_{n-1}+1\le j\le q$\\ and $\sum_{j=m_{n-1}+1}^q m_i^{(j)}=m_i$ for all $1\le i\le n-2$\\[2mm]   
$h\gets n-2$
\FOR{$m_{n-1}+1 \leq j \leq q$}
  \STATE $r \gets q^{n-2}$   
  \WHILE{$r>0$}
    \STATE $t \gets \min\left\{r/q^{h-1},m_h\right\}$
    \STATE $m_h\gets m_h-t$
    \STATE $r\gets r-t\cdot q^{h-1}$
    \IF{$t=0$}
      \STATE $h\gets h-1$
    \ENDIF  
  \ENDWHILE
\ENDFOR
\RETURN $m_i^{(j)}$
\end{algorithmic}
\caption{Computing $m_i^{(j)}$}
\label{algo_decomposition_packing_formula}
\end{algorithm}

\smallskip 
\noindent 
In the three subsequent lemmas we characterize $2$-divisible sets in $\PG(3,2)$ of cardinality $s\in\{3,6,8\}$.
\begin{lemma}
  \label{lemma_2_div_card_3}
  Let $\cP$ be a $2$-divisible set of three points in $\PG(3,2)$ then $\cP$ forms a line. 
\end{lemma}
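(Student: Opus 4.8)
The plan is to exploit the parity condition directly. Since $\#\cP=3$ is odd, $2$-divisibility forces every hyperplane $H$ of $\PG(3,2)$ to satisfy $\#(H\cap\cP)\equiv 1\pmod 2$, i.e.\ $\#(H\cap\cP)\in\{1,3\}$; in particular no hyperplane can contain exactly two of the three points.

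First I would suppose, for contradiction, that the three points $P_1,P_2,P_3$ of $\cP$ are not collinear, and set $\ell\coloneq\langle P_1,P_2\rangle$, so that $\dim(\ell)=2$ (a line) and $P_3\not\le\ell$. In $\PG(3,2)$ the number of hyperplanes through a fixed line is $\tfrac{2^{2}-1}{2-1}=3$, and the intersection of all hyperplanes through $\ell$ equals $\ell$ itself. Since $P_3\not\le\ell$, there is at least one hyperplane $H$ with $\ell\le H$ and $P_3\not\le H$. Then $P_1,P_2\le H$ while $P_3\not\le H$, so $\#(H\cap\cP)=2$, contradicting the parity observation above. Hence $P_1,P_2,P_3$ lie on a common line $L$; as a line of $\PG(3,2)$ has exactly $\tfrac{2^2-1}{2-1}=3$ points, we conclude $\cP=L$.

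The only step that requires any care is the claim that a point lying off a line $\ell$ is avoided by some hyperplane through $\ell$, which is immediate from the standard fact that the hyperplanes containing $\ell$ intersect precisely in $\ell$. Alternatively, and in the spirit of Lemma~\ref{lemma_tail_4}, one may run the standard equations~(\ref{se1})--(\ref{se2}) together with $\sum_{i}\binom{i}{2}a_i=\binom{3}{2}\cdot\tfrac{2^{2}-1}{2-1}=9$ and $a_0=a_2=0$ (forced by $2$-divisibility and $\#\cP=3$) to get $a_1=12$, $a_3=3$; the value $a_3=3$ is incompatible with the three points spanning a plane, since a plane is the unique hyperplane through three non-collinear points, which would give $a_3=1$. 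Either route yields the claim, so I expect no genuine obstacle here.
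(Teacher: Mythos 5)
Your main argument is essentially the paper's own proof: $2$-divisibility with $\#\cP=3$ forces every hyperplane through $\langle P_1,P_2\rangle$ to meet $\cP$ in an odd number of points, hence to contain $P_3$, so $P_3\le\langle P_1,P_2\rangle$; you merely phrase this contrapositively via a hyperplane through the line avoiding $P_3$. Both your contradiction route and your optional standard-equations check are correct, so there is nothing to fix.
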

\begin{proof}
  Let $\cP=\left\{P_1,P_2,P_3\right\}$ and $L\coloneq\left\langle P_1,P_2\right\rangle$. Since all hyperplanes containing $L$ have to contain $\cP$, we have $P_3\in L$.
\end{proof}

\begin{lemma}
  \label{lemma_2_div_card_6}
  Let $\cP$ be a $2$-divisible set of six points in $\PG(3,2)$ then $\cP$ is the disjoint union of two lines.
\end{lemma}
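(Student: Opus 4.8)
The plan is to reduce $2$-divisibility to a single linear algebra condition. Identify the points of $\PG(3,2)$ with the $15$ nonzero vectors of $\F_2^4$, and write each hyperplane as the kernel of a nonzero linear functional $\phi$. For our set $\cP$ of six points, $2$-divisibility means that every hyperplane contains an even number of points of $\cP$; since $\#(\ker\phi\cap\cP)$ and $\#\{p\in\cP:\phi(p)=1\}$ sum to $6$, this says that an even number of $p\in\cP$ satisfy $\phi(p)=1$, i.e.\ $\phi\bigl(\sum_{p\in\cP}p\bigr)=0$, for every $\phi$. Hence
\[
  \sum_{p\in\cP}p=0 \quad\text{in }\F_2^4 .
\]

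Granting this, the next step — which is essentially the only point that needs a genuine argument — is to show that $\cP$ contains a line. Consider the $\binom{6}{2}=15$ pairwise sums $p+p'$ with $p\ne p'\in\cP$; each is nonzero. They are also pairwise distinct: if $p_1+p_2=p_3+p_4$ with $\{p_1,p_2\}\ne\{p_3,p_4\}$, then either two of these four points coincide — impossible, since $\cP$ is a set — or the four points are distinct, in which case the remaining two points of $\cP$ sum to $0$ because $\sum_{p\in\cP}p=0$, again forcing two points of $\cP$ to coincide. So $\{\,p+p':p\ne p'\in\cP\,\}$ is a set of $15$ distinct nonzero vectors, hence equals all of $\F_2^4\setminus\{0\}$. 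In particular every point $q\in\cP$ is of the form $p+p'$ with $p,p'\in\cP$, and necessarily $q\notin\{p,p'\}$ (otherwise one of $p,p'$ would be $0$); thus $\{q,p,p'\}$ are three distinct points with $q+p+p'=0$, i.e.\ a line $\ell_1\subseteq\cP$.

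Finally I would peel off $\ell_1$. The three points of $\cP\setminus\ell_1$ sum to $\sum_{p\in\cP}p+\sum_{p\in\ell_1}p=0$ (in characteristic $2$), so they form a line $\ell_2$. Since $\ell_1$ and $\ell_2$ together partition the six-element set $\cP$, they share no point, and therefore $\cP=\ell_1\sqcup\ell_2$ is a disjoint union of two lines, as claimed.

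An alternative route, closer in spirit to the proof of Lemma~\ref{lemma_tail_4}, is to combine the standard equations~\eqref{se1} and~\eqref{se2} with the quoted generalization of~\eqref{se3} and the vanishing of all odd entries of the spectrum to deduce $a_0=a_6=0$, $a_2=9$, $a_4=6$, and then argue geometrically inside the plane spanned by four non-coplanar points of $\cP$; but the vector argument above is shorter, and the only place where care is needed there is the pairwise-distinctness of the $15$ sums, which (as shown) is a two-case check once $\sum_{p\in\cP}p=0$ is in hand.
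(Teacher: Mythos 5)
Your proof is correct, but it follows a genuinely different route from the paper's. You first reduce $2$-divisibility over $\F_2$ to the vanishing of the vector sum $\sum_{p\in\cP}p$, and then locate a line inside $\cP$ by pure counting: the $\binom{6}{2}=15$ pairwise sums are distinct nonzero vectors, hence exhaust $\F_2^4\setminus\{0\}$, so some point of $\cP$ is itself a sum of two others; peeling off that line leaves three points with zero sum, i.e.\ a second line. The paper instead first shows $\cP$ is spanning, computes the spectrum $a_0=0$, $a_2=9$, $a_4=6$ from the standard equations of Subsection~\ref{subsec_tail}, invokes the MacWilliams transform of the associated linear code to produce a line, and finishes with Lemma~\ref{lemma_2_div_card_3}. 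Your argument is more elementary and self-contained (no coding-theoretic machinery), and it makes transparent exactly where the ambient dimension enters: the pigeonhole step uses that $\F_2^4$ has only $15$ nonzero vectors, consistent with the paper's remark that a second $2$-divisible $6$-set exists in higher dimension but cannot be embedded in $\PG(3,2)$; the paper's route, in turn, reuses machinery that applies uniformly to other cardinalities (as in Lemmas~\ref{lemma_2_div_card_3} and \ref{lemma_2_div_card_8}). One small repair is needed in your distinctness argument: the sub-case in which the two pairs share a point is not ``impossible, since $\cP$ is a set'' --- two distinct pairs from a set may well share an element --- but it is ruled out by cancellation: $p_1=p_3$ together with $p_1+p_2=p_3+p_4$ forces $p_2=p_4$, contradicting $\{p_1,p_2\}\neq\{p_3,p_4\}$. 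With that one-line fix the proof is complete.
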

\begin{proof}
  If $H$ is a hyperplane containing all points of $\cP$, then there is a unique point $P\le H$ with $P\notin\cP$. Since every hyperplane $H'$ that does not contain $P$ intersects $\cP$
  in cardinality $3$, so that this case cannot occur, i.e., $\cP$ is spanning. From the standard equations we compute $a_0=0$, $a_2=9$, and $a_4=6$ for the spectrum. From the MacWilliams transform 
  for the corresponding linear code we conclude the existence of a triple of points $\cP'$ forming a line. Since $\cP\backslash\cP'$ is also $2$-divisible the statement follows from 
  Lemma~\ref{lemma_2_div_card_3}.   
\end{proof}
We remark that there exists a second $2$-divisible set of six points -- a projective base of dimension $5$, which clearly cannot be embedded in $\PG(3,2)$. 

\begin{lemma}
  \label{lemma_2_div_card_8}
  Let $\cP$ be a $2$-divisible set of eight points in $\PG(3,2)$ then $\cP$ is either an affine solid or given by the points of a plane and an intersecting line without the 
  intersection point. 
\end{lemma}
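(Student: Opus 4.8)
The plan is to follow the pattern of the proofs of Lemma~\ref{lemma_2_div_card_3} and Lemma~\ref{lemma_2_div_card_6}: first pin down the spectrum of $\cP$ by the standard equations, then finish with a short parity argument.

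First I would observe that, since $\#\cP=8$ is even, $2$-divisibility forces $\#(H\cap\cP)$ to be even for every plane $H$; as a plane of $\PG(3,2)$ has $7$ points, this number lies in $\{0,2,4,6\}$, so only $a_0,a_2,a_4,a_6$ can be nonzero in the spectrum $(a_i)$. Equations~(\ref{se1}), (\ref{se2}), and the generalization $\sum_i\binom{i}{2}a_i=\binom{\#\cP}{2}\cdot\frac{q^{n-2}-1}{q-1}$ of (\ref{se3}), specialized to $n=4$, $q=2$, read $a_0+a_2+a_4+a_6=15$, $\ 2a_2+4a_4+6a_6=56$, and $a_2+6a_4+15a_6=84$. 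Solving these gives $a_2=3a_6$, $a_4=14-3a_6$ and $a_0+a_6=1$, so either $(a_0,a_6)=(1,0)$ or $(a_0,a_6)=(0,1)$.

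In the first case there is a plane $H$ with $H\cap\cP=\emptyset$; since $\PG(3,2)$ has $15$ points and $\#H=7$, the complement of $H$ has $8=\#\cP$ points, hence $\cP$ equals that complement and is an affine solid. In the second case $a_0=0$ and $a_6=1$, so there is a unique plane $H$ with $\#(H\cap\cP)=6$; thus $H\cap\cP=H\setminus\{Q\}$ for a unique point $Q\in H$, while $\cP\setminus H=\{R_1,R_2\}$ consists of the two points of $\cP$ not on $H$. Set $M\coloneq\langle R_1,R_2\rangle$; since $R_1,R_2\notin H$, $M$ is not contained in $H$, so $H$ is not among the three planes containing $M$. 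The key step — and the one I expect to be the only real obstacle — is to show $Q\in M$: for a plane $G\supseteq M$, the intersection $\ell\coloneq G\cap H$ is a line, and because the only points of $\cP$ off $H$ are $R_1,R_2$ (both lying on $M\subseteq G$) one gets $\#(G\cap\cP)=\#(\ell\setminus\{Q\})+2$, which equals $5$ if $Q\notin\ell$, contradicting $2$-divisibility; hence $Q\in G\cap H$ for each of the three planes $G\supseteq M$. Two distinct planes through $M$ meet exactly in $M$, so $Q\in M$, i.e.\ $Q=R_1+R_2$, and therefore $\cP=(H\setminus\{Q\})\cup\{R_1,R_2\}=(H\cup M)\setminus\{Q\}$: the points of the plane $H$ together with the line $M$ meeting $H$ in the point $Q$, minus that intersection point.

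Finally I would remark, for completeness though it is not needed for the stated implication, that both configurations are genuinely $2$-divisible, which can be verified directly or via the observation that a point set in $\PG(n-1,2)$ is $2$-divisible exactly when the sum of its points, as vectors in $\F_2^n$, is zero.
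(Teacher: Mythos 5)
Your proof is correct and takes essentially the same route as the paper: the standard equations pin down the spectrum (forcing either an empty plane, which gives the affine solid, or a plane containing six of the points), and in the latter case a short parity count shows the two outside points and the uncovered point of the plane are collinear. The only cosmetic difference is that you run the parity argument over the three planes through the line spanned by the two outside points, whereas the paper argues with planes through the uncovered point; the content is the same.
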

\begin{proof}
  Assume that $\pi$ is a hyperplane, which is a plane in our 
  situation, containing six of the eight points and denote the unique uncovered point of $\pi$ by $P$. Each hyperplane that is incident with $P$ contains either two or six 
  of the points in $\pi$. Thus, the remaining two points form a line $L$ containing $P$. Clearly, there is a unique example up to symmetry. Otherwise each hyperplane contains 
  either $0$, $2$, or $4$ points, so that the standard equations yield that there is a unique empty hyperplane and all other hyperplanes contain exactly four points, i.e., the point 
  set is given by an affine solid.
\end{proof}
We remark that both point sets can also be described as unions of two $2$-divisible point sets, i.e., the union of two affine planes in the first case and the union of a line and 
a projective basis of size five in the second case.

\section{Tight irreducible affine vector space partitions of minimum size that can be obtained by compression}
\label{sec_compression}
In Subsection~\ref{subsec_hitting_formulas} we have shown how {\avsp}s of $\PG(n-1,2)$ can be obtained from hitting formulas by compression. In~\cite{PeitlSzeider} 
irreducible hitting formulas of minimum possible mentioning all variables where enumerated up to seven variables. Going over their list we obtain the following examples 
of tight irreducible {\avsp}s that can be obtained by compression and that have the minimum possible size $\sigma_2(n)$, see Section~\ref{sec_minimum_size}. The pairs 
of strings that can be compressed to an affine subspace are separated by horizontal lines. 

Examples for $n = 5$:
\begin{align*}
&00** & &00** \\
&1*0* & &1*0* \\
&010* & &01*0 \\
&1*10 & &1*10 \\
&\underline{*111} & &\underline{*111} \\
&0110 & &0101 \\
&1011 & &1011 \\
\end{align*}

Examples for $n = 6$:
\begin{align*}
&00*** & &00*** & &00*** \\
&100** & &1*00* & &100** \\
&*100* & &1*1*0 & &01*0* \\
&1*1*0 & &*101* & &1*1*0 \\
&\underline{*1*11} & &\underline{*11*1} & &\underline{*1*11} \\
&101*1 & &0100* & &01*10 \\
&\underline{011*0} & &\underline{1001*} & &\underline{11*01} \\
&*1101 & &011*0 & &101*1 \\
&*1010 & &101*1 & &110*0
\end{align*}    

For $n=7$ there is a unique example:
\begin{align*}
&000*** \\
&10*0** \\
&*1**00 \\
&**111* \\
&\underline{*10**1} \\
&*0110* \\
&\underline{*1101*} \\
&*11*01 \\
&\underline{*10*10} \\
&0010** \\
&1001** \\
\end{align*}

For $n=8$, there are $26$ irreducible hitting formulas of size $13$ mentioning all $n-1=7$ variables.
Curiously enough, compression was always successful.  Moreover, we can also obtain tight irreducible {\avsp}s of $\PG(7,2)$ 
of minimum size $\sigma_2(8)=10$ by compression starting from an irreducible hitting formulas with strictly more than $13$ terms: 
\begin{align*}
&***1**0& &*10****& &1*10*1*\\
&***10*1& &*000*0*& &0*11**0\\
&00*0*1*& &100**1*& &111**0*\\
&0*00*0*& &00*11**& &\underline{001***1}\\
&1***1*1& &\underline{1*100**}& &0010**0\\
&\underline{*110**0}& &1001*0*& &\underline{0111**1}\\
&*010*00 & &\underline{0000*1*}& &110*1**\\
&\underline{*100*10} & &0*10**1& &\underline{000*0**}\\
&0**11*1& &\underline{1*11**0}& &01*0***\\
&\underline{1**00*1}  & &0*1*0*0& &\underline{10*1***}\\
&1*00*00& &\underline{1*1*1*1}& &0*011**\\
&\underline{0*10*01}& &01111**& &\underline{1*000**}\\
&10*0*10& &\underline{00010**}& &1010*0*\\
&01*0*11& &**101*0& &\underline{1111*1*}\\
& & &**110*1& &*0001**\\
&  & &       & &*1010**
\end{align*}

\bibliographystyle{alphaurl}
\bibliography{avsp_revised}{}

\end{document}